\documentclass[10pt]{gen-j-l}
\vfuzz2pt 
\hfuzz2pt 


\usepackage{etex}

\usepackage[utf8]{inputenc}
\usepackage[T1]{fontenc}

\usepackage{amsmath}
\usepackage{amsthm}
\usepackage{amsfonts}
\usepackage{amssymb}
\usepackage{amsrefs}
\usepackage{pstricks}
\usepackage{pst-plot}
\usepackage{eso-pic}
\usepackage{graphicx}
\usepackage{tikz}
\usepackage[cmtip,all]{xy}

\newtheorem{theo}{Theorem}
\newtheorem{lemm}[theo]{Lemma}
\newtheorem{theoa}{Theorem}
\newtheorem{prop}[theo]{Proposition}

\newtheorem{defi}[theo]{Definition}
\newtheorem{example}[theo]{Example}
\newtheorem{rema}[theo]{Remark}


\newcommand{\PP}{\mathbb{P}}

\newcommand{\ZZ}{\mathbb{Z}}

\newcommand{\CC}{\mathbb{C}}

\newcommand{\FF}{{\mathbb{F}}}

\newcommand{\Fb}{\mathbf{F}}
\newcommand{\Cr}{\mathcal{C}}
\newcommand{\Qr}{\mathcal{Q}}

\newcommand{\Ar}{\mathcal{A}}


\newcommand{\Tr}{\mathrm{Tr}}

\newcommand{\dr}{\partial}
\newcommand{\Ric}{\mathfrak{R}}
\newcommand{\dd}{\mathrm{d}}

\newcommand{\card}{\mathrm{Card}}

\newcommand{\MCG}{\mathrm{MCG}}
\newcommand{\PMCG}{\mathrm{PMCG}}
\newcommand{\CV}{\mathrm{Char}}

\title[Isomonodromic deformations arising from quintic curves]{Algebraic isomonodromic deformations of the five punctured sphere arising from quintic plane curves}

\author{Arnaud Girand}
\address{Collège Jean Mermoz --- 24, rue du 2e régiment de dragons --- 02000 Laon (France)}
\curraddr{}
\email{arnaud.girand@ens-cachan.org \text{\textbf{(preferred means of communication)}}}
\thanks{}

\subjclass{14E22, 20G05, 20G20, 32D15, 32G08, 32G34, 34M50, 34M56, 51N15, 55R10}

\date{}

\dedicatory{}

\begin{document}

\begin{abstract}
In this paper, we classify the algebraic isomonodromic deformations that can be obtained through restriction to generic lines of logarithmic flat connections on the complex projective plane $\PP^2_\CC$ whose singular locus is a quintic curve. We then explicitly compute the (finite) finite mapping class group orbits of the associated points in the character variety and describe the new algebraic Garnier solutions that can be obtained through this procedure.
\vspace{1cm}
\textbf{Modified : \today .}
\end{abstract}

\maketitle

\setcounter{tocdepth}{1}
\tableofcontents

\section{Introduction}

\subsection{Representation theoretic result}

The object of this paper is to classify all new Garnier solutions that can be obtained through restriction to generic lines of logarithmic flat connections on the complex projective plane $\PP^2_\CC$ whose singular locus is a quintic curve. In order to do so, we first look at this problem from a purely representation theoretic point. Indeed, if $\nabla$ is some logarithmic flat $\mathfrak{sl}_2(\CC)$--connection over $\PP^2_\CC$ whose polar locus is a quintic curve $Q$, then it is determined, as per the classical Riemann--Hilbert correspondance, by the data of its monodromy representation 
\begin{displaymath}
\rho_\nabla : \pi_1(\PP^2 \setminus Q) \rightarrow SL_2(\CC) \; .
\end{displaymath}

Therefore, our first move will be to classify the group representations $$\rho : \Gamma \rightarrow PSL_2(\CC)$$, where $\Gamma$ is the fundamental group of the complement of some quintic curve in $\PP^2(\CC)$, that satisfy the following conditions:
\begin{enumerate}
\item[\textbf{(C1})] the image of $\rho$ is irreducible and infinite;
\item[\textbf{(C2})] $\rho$ does not factor through a curve (see Definition~\ref{def:factorCurve}).
\end{enumerate}
These two conditions appear, in light of previous work by Diarra~\cite{Diar1} and Mazzocco~\cite{Mazz} to constitute a reasonable starting point to try and produce new algebraic isomonodromic deformations of the five punctured sphere. 

If a monodromy representation $\rho$ satisfies these conditions, then the local monodromy (see Definition \ref{def:locMon}) around any irreducible component of $Q$ must be non--trivial. Indeed, one would otherwise get a factorisation of $\rho$ through the fundamental group of the complement of some curve in $\PP^2$ with degree at most $4$ in which case earlier work by Cousin (see Section 5.1 in~\cite{theseGael}) proves that $\rho$ cannot satisfy both \textbf{(C1)} and \textbf{(C2)}. We prove the following result.

\begin{theoa}\label{thA:quintics}
Let $\Gamma$ be the fundamental group of the complement of some quintic curve $Q$ in $\PP^2(\CC)$ and let $\rho : \Gamma \rightarrow PSL_2(\CC)$ satisfy the following conditions:
\begin{enumerate}
\item[\textbf{(C1})] the image of $\rho$ is irreducible and infinite;
\item[\textbf{(C2})] $\rho$ does not factor through a curve.
\end{enumerate}

Then the triple $(\Gamma,\rho,Q)$ is (up to global conjugacy for $\rho$ and $PGL_3(\CC)$ action for $Q$) one of the following:
\begin{enumerate}
\item $\Gamma \cong \langle a,b,c \, | \, (ab)^2 (ba)^{-2} = (ac)^2 (ca)^{-2}= [b,c]=1 \rangle \;  $,
$$\rho : a  \mapsto \begin{pmatrix}
0 & 1 \\ 
-1 & 0
\end{pmatrix}  , \quad 
b  \mapsto \begin{pmatrix}
u & 0 \\ 
0 & u^{-1}
\end{pmatrix}, \quad
c  \mapsto \begin{pmatrix}
v & 0 \\ 
0 & v^{-1}
\end{pmatrix} \; , \text{ for some }  u,v \in \CC^* \; ,$$
and the curve $Q$ is composed of three lines tangent to a conic ; 
\item $\Gamma \cong \langle a,b,c \, | \, [a,b] = [a,c^{-1}bc] = 1, \, (bc)^2 = (cb)^2\rangle$,
\begin{displaymath}
\rho : a  \mapsto \begin{pmatrix}
u & 0 \\ 
0 & u^{-1}
\end{pmatrix}  , \quad 
b  \mapsto \begin{pmatrix}
v & 0 \\ 
0 & v^{-1}
\end{pmatrix}, \quad
c  \mapsto \begin{pmatrix}
0 & 1 \\ 
-1 & 0
\end{pmatrix} \; , \text{ for some }  u,v \in \CC^* \; ,
\end{displaymath}
and $Q$ is made of three concurrent lines and a conic tangent to two of the latter ;
\item $\Gamma \cong \langle a,b,c \, |, [a,b]=[b, c^2]=1,\, ca = bc\rangle$,
\begin{displaymath}
\rho : a  \mapsto \begin{pmatrix}
t &  0 \\ 
0 & t^{-1}
\end{pmatrix}  , \quad 
b  \mapsto \begin{pmatrix}
t^{-1} & 0 \\ 
0 & t
\end{pmatrix}, \quad
c  \mapsto \begin{pmatrix}
0 & 1 \\ 
-1 & 0
\end{pmatrix} \; , \text{ for some }  t \in \CC^* \; .
\end{displaymath}
Here, the curve $Q$ is one of two special configurations of two lines and a cubic, described in Section~\ref{sec:cubic}. 
\end{enumerate}
Note that the above triples do occur but do not necessarily satisfy conditions \textbf{(C1)} and \textbf{(C2)}, depending on the parameters.
\end{theoa}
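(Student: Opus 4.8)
The plan is to split the classification into two layers: first decide which quintic configurations $Q$ can support a representation satisfying \textbf{(C1)} and \textbf{(C2)}, and then, for each surviving configuration, describe the representations themselves. Since the image of $\rho$ is irreducible it cannot be abelian, because an abelian subgroup of $PSL_2(\CC)$ is either reducible or the finite Klein four–group, the latter excluded by infiniteness. Hence $\Gamma=\pi_1(\PP^2\setminus Q)$ must be non–abelian. By the Zariski–Deligne–Fulton theorem the complement of a curve with only nodes has abelian fundamental group, so $Q$ must carry genuinely non–transverse singularities: tangencies between components, points where three or more components are concurrent, cusps, and the like. Combined with the already–established fact that the local monodromy around every irreducible component is non–trivial, this lets me run through the finitely many degree decompositions $5=\sum_i d_i$ and, within each, through the admissible incidence/singularity configurations, discarding at once every arrangement whose complement group is abelian.

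The second ingredient is the structural dichotomy for $PSL_2(\CC)$–representations of quasi–projective fundamental groups (the Corlette–Simpson / Loray–Pereira–Touzet circle of results): a representation with Zariski–dense image either factors through an orbifold curve or is rigid of geometric, polydisk–Shimura origin. Condition \textbf{(C2)} removes the first alternative, and I would verify that the second does not occur for the groups at hand, so that any $\rho$ satisfying \textbf{(C1)}–\textbf{(C2)} has non–Zariski–dense, yet irreducible and infinite, image. Such an image fixes an unordered pair of points of $\PP^1$ and is therefore contained in the normaliser $N(T)$ of a maximal torus $T\cong\CC^*$; in other words $\rho$ is dihedral. This is the conceptual heart of the argument and the main obstacle, since one must rule out the Zariski–dense non–factoring representations configuration by configuration.

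Once dihedrality is in hand, $\rho$ is encoded by two pieces of data: the parity homomorphism $\varepsilon:\Gamma\to N(T)/T\cong\ZZ/2\ZZ$, factoring through $H_1(\PP^2\setminus Q,\ZZ/2\ZZ)$, which records the meridians sent into $N(T)\setminus T$; and, on the index–two kernel $\Gamma_0=\ker\varepsilon$, a character $\chi:\Gamma_0\to\CC^*$, so that $\rho=\Ind_{\Gamma_0}^{\Gamma}\chi$. I would then translate the hypotheses into this data: irreducibility forces $\varepsilon\neq 0$ (some meridian lands in the Weyl class); infiniteness forces $\chi$ to have infinite image (a torus parameter that is not a root of unity); non–triviality of each local monodromy pins down the behaviour on every meridian (one with $\varepsilon=1$ is automatically sent to a non–trivial element, while one with $\varepsilon=0$ must receive a non–central diagonal matrix); and \textbf{(C2)} excludes the special $\chi$ pulled back from a curve, which accounts for the final remark that the triples occur but need not satisfy the conditions for all parameters. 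The homological constraint $\sum_i d_i\gamma_i=0$ restricts the admissible parities $\varepsilon$ for each decomposition, and matching these against the surviving configurations should single out exactly the three families.

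Finally, for each survivor I would produce a presentation of $\Gamma$ by the Zariski–van Kampen method — meridian generators, the global relation $\sum_i d_i\gamma_i=0$ used to eliminate one generator, and the braid/commutation relations read off at each tangency or concurrence — and identify the geometry (three lines tangent to a conic; three concurrent lines and a tangent conic; two lines and one of the special cubics of Section~\ref{sec:cubic}). Normalising $\rho$ by global conjugation so that $T$ is the diagonal torus and the Weyl generator is $J=\left(\begin{smallmatrix}0&1\\-1&0\end{smallmatrix}\right)$, and normalising $Q$ by the $PGL_3(\CC)$–action, should reproduce the three presentations and the explicit matrices, after which a direct check that these matrices satisfy the listed relations closes the argument. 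The bulky but routine part is the case–by–case Zariski–van Kampen bookkeeping; the genuinely delicate step remains the exclusion of Zariski–dense non–factoring representations carried out in the second paragraph.
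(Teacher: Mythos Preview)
Your broad strategy --- reduce to dihedral representations via the Corlette--Simpson/Loray--Pereira--Touzet dichotomy, then classify the dihedral data --- matches the paper's, and your encoding of a dihedral $\rho$ by a parity $\varepsilon$ and a character $\chi$ on $\ker\varepsilon$ is a clean way to organise the endgame. But two of your intermediate steps are genuine gaps rather than routine bookkeeping.

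First, the enumeration of quintic configurations with non-abelian $\pi_1$ is not something one ``runs through'' in passing: this is precisely Degtyarev's classification~\cite{Degtyarev}, which the paper imports wholesale (Table~\ref{tab:Deg}) rather than rederiving. You should start from that list. Second, and more seriously, your exclusion of Zariski-dense non-factoring representations is only a placeholder. Theorem~\ref{th:LPT-CS} says a Zariski-dense $\rho$ either factors or is \emph{rigid}; it does not rule out rigidity, and the paper eliminates that possibility group by group. For $B_3$, $B_4$, $\Gamma_3'$, $\Gamma_4$ this means \emph{exhibiting explicit one-parameter deformations} of any irreducible representation, contradicting rigidity (Lemmas~\ref{lem:braidGroupDihedral}--\ref{lem:braidGroupNonDihedral} and the computations in \S\ref{subsubsec:braidGroups}); for $\Gamma_3$ and $\Ar^2(2,3,5)$ the relations force traces into Schwartz's list, yielding finite image; for the solvable $G$-type groups one invokes Proposition~\ref{prop:reprSolv}; for $\Gamma_5$ one cites Cousin. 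This case work is the substance of Proposition~\ref{prop:DegGroups}, and your proposal offers no mechanism for it.

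You are also missing a geometric tool the paper leans on before any algebra begins: when $Q$ has a point of multiplicity $\ge 3$, blowing it up produces a $\PP^1$-fibration with enough sections (after at worst a double cover) to split $\pi_1$ as a product and force factorisation through a curve (Propositions~\ref{prop:fiveFoldSing}--\ref{prop:tripSing}, Remark~\ref{rem:noCoverNeeded}). This disposes of a large portion of Degtyarev's list --- several $5C_1$, $2C_2\sqcup C_1$, $C_2\sqcup 3C_1$ and $C_3\sqcup 2C_1$ configurations --- independently of the dihedral analysis, and your parity/character framework would not detect it.
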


\begin{figure}[!h]
\begin{center}
\scalebox{0.5}{
\begingroup%
  \makeatletter%
  \providecommand\color[2][]{%
    \errmessage{(Inkscape) Color is used for the text in Inkscape, but the package 'color.sty' is not loaded}%
    \renewcommand\color[2][]{}%
  }%
  \providecommand\transparent[1]{%
    \errmessage{(Inkscape) Transparency is used (non-zero) for the text in Inkscape, but the package 'transparent.sty' is not loaded}%
    \renewcommand\transparent[1]{}%
  }%
  \providecommand\rotatebox[2]{#2}%
  \ifx\svgwidth\undefined%
    \setlength{\unitlength}{841.68bp}%
    \ifx\svgscale\undefined%
      \relax%
    \else%
      \setlength{\unitlength}{\unitlength * \real{\svgscale}}%
    \fi%
  \else%
    \setlength{\unitlength}{\svgwidth}%
  \fi%
  \global\let\svgwidth\undefined%
  \global\let\svgscale\undefined%
  \makeatother%
  \begin{picture}(1,0.70658683)%
    \put(0,0){\includegraphics[width=\unitlength]{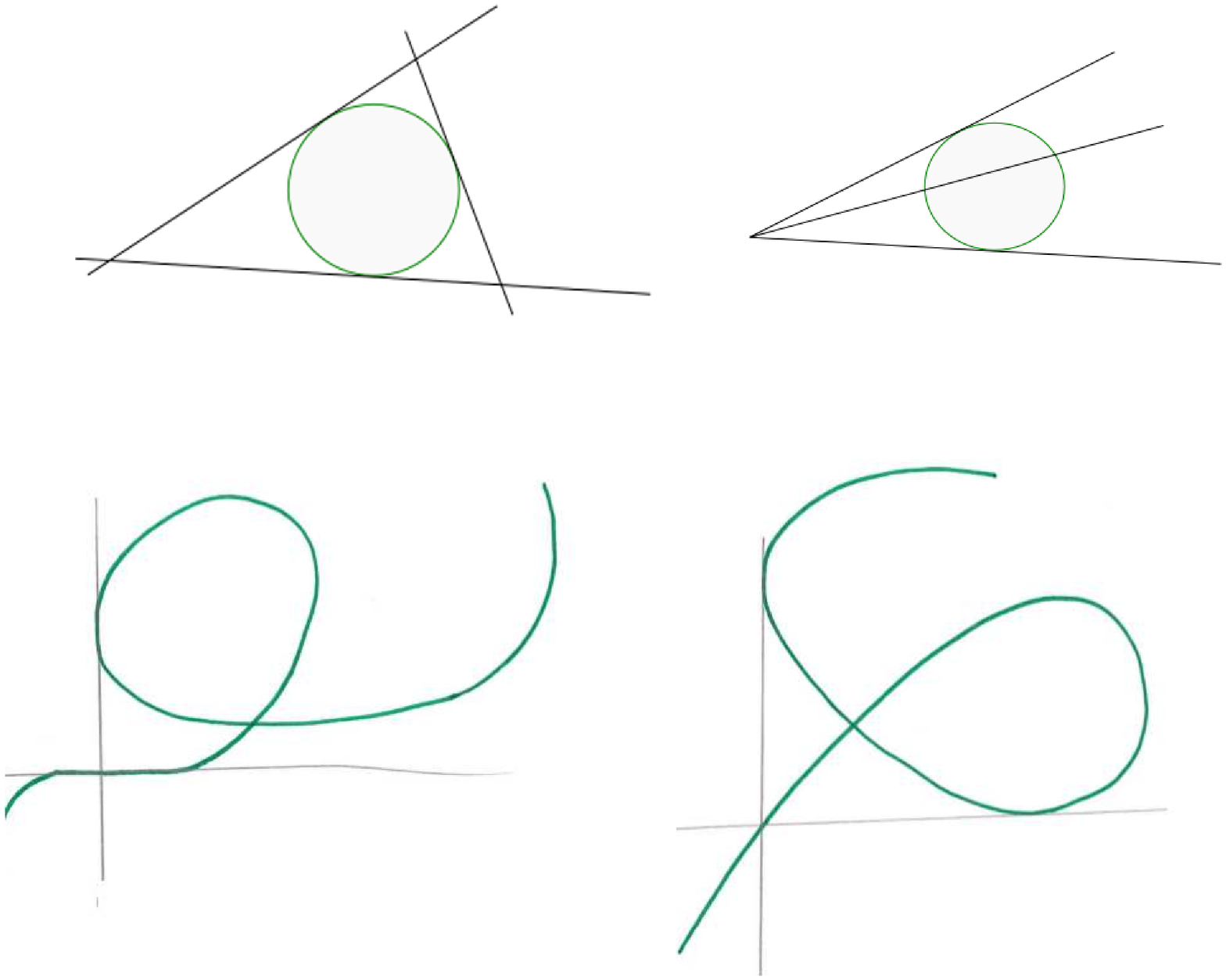}}%
  \end{picture}%
\endgroup}
\end{center}
\caption{Quintic curves appearing in Theorem~\ref{thA:quintics}. On the top row (from left to right) are case 1 and 2, case 3 appears in the bottom row.} \label{fig:quintics}
\end{figure}

The proof of the above theorem, as detailed in Section \ref{sec:Quintics}, is built upon Degtyarev's work on classifying the non--abelian fundamental groups of the complement of quintic plane curves in $\PP^2_\CC$~\cite{Degtyarev}. We give a detailed description of the curves corresponding to each of the cases in Theorem~\ref{thA:quintics}. 
\subsection{Garnier solutions and mapping class group orbits}

In the second part of this paper, we set up the basis of the method we will use to actually construct algebraic isomonodromic deformations corresponding to the group representations appearing in Theorem~\ref{thA:quintics}. Taking inspiration from Hitchin's works on the Painlevé VI equation~\cite{Hitchin} sur l'équation de Painlevé VI, we make the following remark: if one has a logarithmic flat connection $\nabla$ over the projective plane $\PP^2_\CC$ whose polar locus is exactly some degree five algebraic curve, then since any generic line intersects such a curve at five points the family of connections given by the restriction of $\nabla$ to such lines yields an isomonodromic deformation of the five punctured sphere. Moreover, the monodromy representation of any such restricted connection has the same image as that of $\nabla$.

More precisely, let $L$ be some line generically chosen in the complex projective plane $\PP^2_\CC$; then $L$ intersects the polar locus of the connection $\nabla$ at exactly five points, which we can (up to Möbius transformation) assume to be equal to $0, \, 1,$ $\infty$ and $t_1, t_2 \in \CC^*\setminus \lbrace 1 \rbrace$. Restricting $\nabla$ to $L$ one gets a logarithmic flat connection over the five punctured Riemann sphere $\PP^1_5~:= \PP^1_\CC \setminus \lbrace 0,1,t_1,t_2,\infty \rbrace$ whose monodromy representation $\rho_L$ is given by the diagram
\begin{displaymath}
\xymatrix{
\pi_1(\PP^1_5) \cong \Fb_4  \ar[rd]_{\rho_L}  \ar[r]^{\tau}& \pi_1(\PP^2_\CC - Q) \ar[d]^{\rho_\nabla}\\
 & SL_2(\CC) 
}
\end{displaymath}
where $\tau$ is the natural surjective morphism give by the Lefschetz hyperplane theorem (see Theorem 7.4 in~\cite{Milnor}). Since $\nabla$ is flat, it is known~\cite{Malgrange} that $\rho_L$ does not depend (up to conjugacy) on the choice of the generic line $L$. Therefore, there exists a Zariski--open set in the dual $\widehat{\PP^2(\CC)}$ such that all connections in the family $(\nabla_L)_{L \in U}$ have the same monodromy (up to conjugacy).

Using the explicit description of the curves appearing in Theorem~\ref{thA:quintics}, we are then able to actually compute the corresponding monodromy representations. Namely, if one sets $\Fb_4 := <d_1, d_2,d_3,d_4 \, | \, \emptyset\rangle$ then they are (in the same order and up to conjugacy): 
 \begin{align*}
\rho_1~: d_1  \mapsto \begin{pmatrix}
v& 0 \\ 
0 & v^{-1}
\end{pmatrix}\quad  d_2 \mapsto \begin{pmatrix}
u & 0 \\ 
0 &  u^{-1}
\end{pmatrix}\quad d_3 \mapsto\begin{pmatrix}
0 & 1 \\ 
-1 & 0
\end{pmatrix}\quad d_4 \mapsto \begin{pmatrix}
0 & u^{2}\\ 
-u^{-2} & 0
\end{pmatrix} \\
\rho_2~: d_1\mapsto \begin{pmatrix}
0& 1 \\ 
-1 &0
\end{pmatrix}\quad d_2 \mapsto \begin{pmatrix}
v & 0 \\ 
0 &  v^{-1}
\end{pmatrix}\quad d_3 \mapsto\begin{pmatrix}
u & 0 \\ 
0 &u^{-1}
\end{pmatrix}\quad d_4 \mapsto\begin{pmatrix}
v & 0 \\ 
0 &  v^{-1}
\end{pmatrix}\quad \\
\rho_3~: d_1 \mapsto \begin{pmatrix}
0& 1 \\ 
-1 &0
\end{pmatrix}\quad d_2 \mapsto\begin{pmatrix}
0& u^{-1} \\ 
-u &0
\end{pmatrix}\quad d_3 \mapsto\begin{pmatrix}
u & 0 \\ 
0 &u^{-1}
\end{pmatrix}\quad d_4 \mapsto\begin{pmatrix}
u^{-1} & 0 \\ 
0 & u
\end{pmatrix} \\
\rho_4~: d_1 \mapsto\begin{pmatrix}
0& 1 \\ 
-1 &0
\end{pmatrix}\quad d_2 \mapsto\begin{pmatrix}
u & 0 \\ 
0 &u^{-1}
\end{pmatrix}\quad d_3 \mapsto\begin{pmatrix}
u & 0 \\ 
0 &u^{-1}
\end{pmatrix}\quad d_4 \mapsto\begin{pmatrix}
u^{-1} & 0 \\ 
0 & u
\end{pmatrix} \; .
\end{align*}
 
We then turn our attention to the corresponding orbits under the mapping class group action on the $SL_2(\CC)$--character variety of the five punctured sphere, namely the categorical quotient of its variety of representations under the diagonal conjugacy action of $SL_2(\CC)$: 
\begin{displaymath}
\CV(0,5) := \mathrm{Hom}(\Fb_4 , SL_2(\CC) \slash\slash SL_2(\CC) \; .
\end{displaymath}
The links between these orbits and the algebraicity of isomonodromic deformations has been extensively studied by Dubrovin--Mazzocco~\cite{DubMazz}, Boalch~\cite{Boalch}, Cantat--Loray~\cite{CL} and Cousin~\cite{Cousin}. In particular, it is now known that such an orbit is finite if and only if the associated isomonodromic deformation gives rise to an algebraic Garnier solution. We give explicit computations for these, in the form of the following result.

\begin{theoa}\label{thA:MCGOrbits}
Consider the following four families of representations (parametrised by some $u,v,s \in \CC^*$) of the free group over four generators $\Fb_4 := \langle d_1, \ldots, d_4 \, | \, \emptyset \rangle$ into $SL_2(\CC)$.
\begin{align*}
\rho_1& : d_1  \mapsto \begin{pmatrix}
v& 0 \\ 
0 & v^{-1}
\end{pmatrix}\quad  d_2 \mapsto \begin{pmatrix}
u & 0 \\ 
0 &  u^{-1}
\end{pmatrix}\quad d_3 \mapsto\begin{pmatrix}
0 & 1 \\ 
-1 & 0
\end{pmatrix}\quad d_4 \mapsto \begin{pmatrix}
0 & u^{2}\\ 
-u^{-2} & 0
\end{pmatrix} \\
\rho_2& : d_1\mapsto \begin{pmatrix}
0& 1 \\ 
-1 &0
\end{pmatrix}\quad d_2 \mapsto \begin{pmatrix}
v & 0 \\ 
0 &  v^{-1}
\end{pmatrix}\quad d_3 \mapsto\begin{pmatrix}
u & 0 \\ 
0 &u^{-1}
\end{pmatrix}\quad d_4 \mapsto\begin{pmatrix}
v & 0 \\ 
0 &  v^{-1}
\end{pmatrix}\quad \\
\rho_3& : d_1 \mapsto \begin{pmatrix}
0& 1 \\ 
-1 &0
\end{pmatrix}\quad d_2 \mapsto\begin{pmatrix}
0& s^{-1} \\ 
-s &0
\end{pmatrix}\quad d_3 \mapsto\begin{pmatrix}
s & 0 \\ 
0 &s^{-1}
\end{pmatrix}\quad d_4 \mapsto\begin{pmatrix}
s^{-1} & 0 \\ 
0 & s
\end{pmatrix} \\
\rho_4& : d_1 \mapsto\begin{pmatrix}
0& 1 \\ 
-1 &0
\end{pmatrix}\quad d_2 \mapsto\begin{pmatrix}
s & 0 \\ 
0 &s^{-1}
\end{pmatrix}\quad d_3 \mapsto\begin{pmatrix}
s & 0 \\ 
0 &s^{-1}
\end{pmatrix}\quad d_4 \mapsto\begin{pmatrix}
s^{-1} & 0 \\ 
0 & s
\end{pmatrix}
\end{align*}
Then:
\begin{enumerate}
\item the associated points in $\CV(0,5)$ give rise to four pairwise distinct families of length four finite orbits under the pure mapping class group $\PMCG(0,5)$;
\item the families of (non--pure) mapping class group orbits associated with $\rho_1$ and $\rho_2$ are also distinct; however those associated with $\rho_3$ and $\rho_4$ are special cases of $\rho_2$--type orbits. More precisely, this means that for any $s \in \CC^*$ and $i=3,4$ there exist two parameters $(u,v)$ (depending on $s$ and $i$) such that the orbit of the class of $\rho_i$ with parameter $s$ is equal to that of $\rho_2$ with parameters $(u,v)$.
\end{enumerate} 
\end{theoa}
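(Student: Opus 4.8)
The overall strategy rests on a single structural observation: each of $\rho_1,\ldots,\rho_4$ takes values in the normalizer $N(T)$ of the diagonal torus $T\subset SL_2(\CC)$, so all four are \emph{dihedral} representations. Writing $d_5:=(d_1d_2d_3d_4)^{-1}$ for the fifth boundary loop and letting $\varpi:N(T)\to N(T)/T\cong\ZZ/2\ZZ$ be the projection, the composite $\varpi\circ\rho_i$ records which of $d_1,\ldots,d_5$ is sent to a diagonal matrix (a ``rotation'') and which to an anti-diagonal one (a ``reflection''). A direct inspection shows that in each family exactly two of the five punctures are reflections and the remaining three are rotations. Since every trace function factors through $N(T)$ — so that $\Tr\rho(d_id_j)$ vanishes whenever exactly one of $d_i,d_j$ is a reflection and is a Laurent monomial combination of eigenvalues otherwise — the full list of trace coordinates of each $\rho_i$ can be written down explicitly as functions of $u,v,s$.

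First I would prove finiteness and the value $4$ conceptually. Fixing the five boundary traces $\theta_k:=\Tr\rho(d_k)$ and the reflection pattern, I claim there are exactly four dihedral characters with this local data. Indeed, after conjugating into $N(T)$ one may normalise a single reflection to $\left(\begin{smallmatrix}0&1\\-1&0\end{smallmatrix}\right)$ using the $T$-action, after which the relation $d_1\cdots d_5=1$ determines the remaining reflection; the only residual discrete freedom is, for each of the three rotation generators, the ordering of its two eigenvalues, modulo the simultaneous Weyl flip. This gives $2^{3}/2=4$ characters, pairwise distinct for generic eigenvalues because traces of products of two rotations detect the relative signs. Since precomposition by an automorphism of $\Fb_4$ does not alter the image, the whole $\PMCG(0,5)$-orbit stays dihedral with the same local data, hence has at most four elements. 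To see the orbit is exactly four I would compute the Dehn twist action: a twist around a curve enclosing two rotations acts trivially (their product lies in the abelian $T$), whereas a twist around a curve enclosing one reflection and one rotation conjugates that rotation by an element of $N(T)\setminus T$ and hence inverts it, i.e. flips the corresponding sign. These sign flips generate a transitive action on the four characters, giving orbits of length exactly four.

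Both remaining claims are then distinctions of orbits detected by trace invariants. Under $\PMCG(0,5)$ the \emph{ordered} vector $(\theta_1,\ldots,\theta_5)$ is fixed, and the positions of its vanishing entries (the reflection pattern) already separate $\rho_1$ (zeros at $3,4$), $\rho_2$ (at $1,5$) and $\rho_3$ (at $1,2$); the pair $\rho_2,\rho_4$, which share the pattern $\{1,5\}$, is separated by the multiplicities among the nonzero traces, generic $\rho_2$ having $\theta_2=\theta_4\neq\theta_3$ whereas $\rho_4$ forces $\theta_2=\theta_3=\theta_4$. This proves (1). For (2) the non-pure elements permute the punctures, so only the \emph{multiset} $\{\theta_1,\ldots,\theta_5\}$ survives as an obvious invariant: $\rho_1$ has three generically distinct nonzero traces while every $\rho_2$ has a repeated one, so their orbits stay distinct. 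On the other hand $\rho_3$ and $\rho_4$ carry the same (two reflections, three rotations) local type as $\rho_2$, and an element of $\MCG(0,5)$ moves their reflection punctures onto $\{1,5\}$; equating the resulting rotation eigenvalues with those of $\rho_2$ gives $v+v^{-1}=s+s^{-1}$ with $u+u^{-1}=-(s+s^{-1})$ for $\rho_3$, and $v+v^{-1}=u+u^{-1}=s+s^{-1}$ for $\rho_4$, i.e. a pair $(u,v)$ depending on $s$. Because $\PMCG(0,5)$ is already transitive on the four dihedral characters with prescribed local data, the permuted representation falls inside the $\rho_2$-orbit for these parameters, so the orbits coincide.

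The main obstacle I expect is upgrading ``at most four'' to ``exactly four'': one must verify by explicit computation that the relevant reflection--rotation Dehn twists really generate the full transitive action on the four sign-classes, and in part (2) one must check that matching local data genuinely forces the full (permuted) tuples to agree up to global conjugacy, not merely their individual traces. Keeping the pure versus non-pure distinction and the eigenvalue bookkeeping straight, while excluding the non-generic parameter values at which two of the four characters collide, is where the care is concentrated.
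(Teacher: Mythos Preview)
Your proposal is correct and takes a genuinely different route from the paper. The paper's proof is entirely computational: it implements the $PB_4$-action in \verb+Maple+, lists the four points of each $\PMCG(0,5)$-orbit in the trace coordinates $(t_1,\ldots,t_5\mid r_1,\ldots,r_{10})$, and distinguishes the four families by inspecting the ordered tuple $(t_1,\ldots,t_4)$. For part~(2) it computes the full $\MCG(0,5)$-orbits (of sizes $240,120,120,40$), which separates $\rho_1$ from $\rho_2$ by cardinality, and then locates specific elements of the $\rho_2$-orbit which specialise, under $u\mapsto -s,\,v\mapsto s$ (resp.\ $u,v\mapsto s$), to points of the $\rho_3$- (resp.\ $\rho_4$-) orbit.

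Your argument replaces all of this by the structural observation that the four families are dihedral with exactly two reflection punctures, so that after normalisation a character is recorded by three eigenvalue-sign choices modulo the global Weyl flip; this gives the bound $|\text{orbit}|\le 4$ without computation, and the paper's explicit orbit listings confirm that the four points are exactly your four sign-classes. Your invariants for separating the families --- the ordered zero pattern of $(\theta_1,\ldots,\theta_5)$ under $\PMCG$, and the multiset with multiplicities under $\MCG$ --- are simpler than the paper's cardinality argument and recover the same parameter substitutions for the inclusions $\rho_3,\rho_4\subset\rho_2$. What the paper's brute-force buys is that transitivity (``exactly four'') and the matching of the full trace tuples in part~(2) come for free from the printout; in your approach these are the points requiring work. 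Your transitivity argument via reflection--rotation Dehn twists is sound (for $\rho_1$, with reflections at positions $3,4$, note that flipping rotations $1$ and $2$ already realises all four classes modulo the Weyl involution, so one does not need a twist involving $d_5$), and your reduction in part~(2) is justified precisely because $\PMCG$-transitivity on the four sign-classes means that matching the \emph{ordered} boundary traces, after an $\mathfrak S_5$-permutation realised in $\MCG(0,5)$, already forces equality of orbits. The apparent size discrepancy ($|\rho_2\text{-orbit}|=120$ versus $|\rho_4\text{-orbit}|=40$) is not a contradiction: $\rho_4$ is the degeneration $u=v$ of $\rho_2$, where extra coincidences among traces shrink the generic orbit.
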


\subsection{Explicit construction of the solutions}

Going from Theorem \ref{thA:MCGOrbits}, we know that our procedure yields two distinct two--parameter families of finite mapping class group orbits. Therefore, the next step should be to try and explicitly construct the associated isomonodromic deformations and algebraic Garnier solutions. Note that the existence, and uniqueness up to gauge transformation, of such families of connections follows from the classical Riemann--Hilbert correspondence. The original part of our work does indeed reside in the fact that we give explicit constructions for these objects and the associated algebraic Garnier solutions.

\paragraph{Solutions associated with $\rho_1$.} 

This deformation has already been quite extensively studied in one of our earlier works~\cite{A2}. We quickly recall the main result of this paper, namely the explicit formulas giving a logarithmic flat connection over $\PP^2_\CC$ whose monodromy representation is exactly $\rho_1$. 

\begin{theo}[Girand \cite{A2}]\label{th:A2}
There exists an \emph{explicit} two--parameter family $\nabla_{\lambda_0, \lambda_1}$ of logarithmic flat connections over the trivial rank two vector bundle $\CC^2 \times \PP^2 \rightarrow \PP^2$ with the following properties:
\begin{enumerate}
\item[(i)] the polar locus of $\nabla_{\lambda_0, \lambda_1}$ is equal to the quintic $\Qr\in \PP^2$ and as such does not depend on $\lambda_0, \lambda_1 \in \CC$;
\item[(ii)] the monodromy of $\nabla_{\lambda_0, \lambda_1}$ is conjugated to $\rho_{u,v}$ with $u = -e^{-i \pi \lambda_0}$ and $v = e^{-i \pi \lambda_1}$. It is a virtually abelian dihedral representation of the fundamental group $\Gamma:=\pi_1(\PP^2-\Qr)$ into $SL_2(\CC)$ whose image is not Zariski--dense.
\end{enumerate}
The connection $\nabla_{\lambda_0, \lambda_1}$ is given in some affine chart $\CC^2_{x,y} \subset	\PP^2$ by:
\begin{displaymath}
\nabla_{\lambda_0,\lambda_1} = \dd - \dfrac{1}{2(x^2+y^2+1-2(xy+x+y))} (\lambda_0 A_0 + \lambda_1	A_1 + A_2) \; , 
\end{displaymath}
where\tiny
\begin{displaymath}
A_0 := \begin{pmatrix}
2(x-1)y\dd x + (x^2 + x(y-2) - y +1)x \frac{\dd y}{y} & 2(2x - y +2)y \dd x + (2x^2 +y(x-y+3)-2)x \frac{\dd y}{y} \\ 
-2y^2 \dd x + (x+y-1)x^2\frac{\dd y}{y}  & -2(x-1)y\dd x - (x^2 + x(y-2) - y +1)x\frac{\dd y}{y}
\end{pmatrix} 
\end{displaymath}
\begin{displaymath}
A_1 := \begin{pmatrix}
(x^2+(x-1)(y-1))y \frac{\dd x}{x} + 2(x-1)x \dd y & (x^2 + y(x-y+3) -2)y \frac{\dd x}{x} + 2(2x-y+2)x \dd y \\ 
-(x+y-1)y^2 \frac{\dd x}{x}-2x^2 \dd y & -(x^2+(x-1)(y-1))y \frac{\dd x}{x} - 2(x-1)x \dd y
\end{pmatrix} 
\end{displaymath}
\begin{displaymath}
A_2 := \begin{pmatrix}
-(x+y+1)y \dd x - (x^2-x(y+2)-y+1)x \frac{\dd y}{y} & -2(x-y+3)y \dd x  -(x^2-2y(x+1)+1) x \frac{\dd y}{y}\\ 
0 & (x+y+1)y \dd x + (x^2-x(y+2)-y+1)x \frac{\dd y}{y}
\end{pmatrix} \; .
\end{displaymath}
\normalsize Moreover, the monodromy representation of the above factors through a curve if and only if there exists $(p, q) \in \ZZ^2\setminus \lbrace (0,0) \rbrace$ such that $p \lambda_0 + q \lambda_1 = 0$. 
\end{theo}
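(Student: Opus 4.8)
The plan is to establish the four assertions in turn, verifying the explicit claims by direct computation and reserving the real effort for the final ``factors through a curve'' equivalence. Writing $\omega := -\tfrac{1}{2D}(\lambda_0 A_0 + \lambda_1 A_1 + A_2)$ with $D := x^2+y^2+1-2(xy+x+y)$, I would first check integrability, $\dd\omega+\omega\wedge\omega=0$; since $\omega$ is affine in $(\lambda_0,\lambda_1)$ this reduces to finitely many bilinear identities among $A_0,A_1,A_2$. For (i), homogenising shows that $D$ cuts out a smooth conic $C$, and the poles of $\omega$ lie along $C$ together with $\{x=0\}$, $\{y=0\}$ and the line at infinity (the sources of the $\tfrac{\dd x}{x}$ and $\tfrac{\dd y}{y}$ terms, with the third line forced by the degree count). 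Each of these three lines meets $C$ in a single double point, hence is tangent to it, so the polar divisor is exactly the quintic $\Qr$ of case~1 of Theorem~\ref{thA:quintics}, independent of $(\lambda_0,\lambda_1)$; one must also confirm that the numerator does not vanish to higher order and create a spurious pole.

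For (ii) I would compute the residue matrices $\mathrm{Res}_{Q_i}\,\omega$ along the components $Q_i$ of $\Qr$: their eigenvalues are affine-linear in $(\lambda_0,\lambda_1)$ and exponentiate to the local monodromies, producing $u=-e^{-i\pi\lambda_0}$ and $v=e^{-i\pi\lambda_1}$. The antidiagonal shape of $A_2$ (and of the relevant combinations along the tangent lines) confines the global monodromy to the normaliser of the diagonal torus, so the image is dihedral; matching residues against the presentation of $\Gamma$ then identifies $\rho_\nabla$ with $\rho_{u,v}$ up to conjugacy. Virtual abelianity, irreducibility and the failure of Zariski-density are immediate from this normal form.

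For the ``Moreover'' statement, note that a dihedral representation is governed by the character $\chi$ obtained by restriction to its canonical index-two subgroup, equivalently by the closed logarithmic $1$-form carrying the diagonal part of $\nabla_{\lambda_0,\lambda_1}$, whose residues along the components of $\Qr$ are proportional to the vector $(\lambda_0,\lambda_1)$ up to the fixed contribution of $A_2$. I would then invoke the classical criterion that such a logarithmic form is composed with a pencil --- so that $\chi$, and hence $\nabla_{\lambda_0,\lambda_1}$, factors through an orbifold curve in the sense of Definition~\ref{def:factorCurve} --- precisely when its residue vector is proportional to an integral one, i.e.\ when there is $(p,q)\in\ZZ^2\setminus\{(0,0)\}$ with $p\lambda_0+q\lambda_1=0$; the forward direction reads the integral relation off a pencil, and for the converse a relation $p\lambda_0+q\lambda_1=0$ lets one realise the form as a logarithmic differential composed with an explicit pencil adapted to the tangency points of $\Qr$. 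The main obstacle is exactly here: the bare monodromy only determines $u,v$, hence $\lambda_0,\lambda_1$ modulo $2\ZZ$, whereas the asserted criterion is the sharp equality $p\lambda_0+q\lambda_1=0$ rather than a congruence such as $p\lambda_0+q\lambda_1\in\ZZ$, so one must argue with the residue data of the connection itself rather than the abstract representation, pin down the correct index-two cover and the admissible pencils on the quintic configuration, and verify the converse carefully so that the equivalence holds with the exact condition.
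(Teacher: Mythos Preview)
This theorem is not proved in the present paper at all: it is quoted verbatim from the author's earlier work~\cite{A2}, so there is no ``paper's own proof'' to compare against here. What the paper does contain is the parallel construction for Theorem~\ref{thA:GarnierSol2} in Section~\ref{sec:explicitConstruction}, which follows the same template as~\cite{A2} and therefore shows you what the intended argument looks like.

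That template is quite different from your verification-by-residues outline. Rather than writing down $\omega$ and checking flatness and monodromy after the fact, the construction in~\cite{A2} (mirrored in Section~\ref{sec:explicitConstruction}) \emph{builds} the connection: one starts with a diagonal abelian connection $\nabla_0$ on $\PP^1\times\PP^1$, pulls it around by an elementary transformation, and then descends through a $2{:}1$ cover $\pi$ ramified along the line whose local monodromy is the order-two element. The involution on the cover acts by $w\mapsto -w$ on the fibre, which is exactly what forces the quotient connection to be dihedral; the explicit matrices $A_0,A_1,A_2$ drop out of this pushforward, and flatness plus the monodromy identification are automatic from the covering picture rather than needing a separate residue computation. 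Your approach would certainly verify (i) and (ii), but it does not explain where the formulas come from, which is the point of the word ``explicit'' in the statement.

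For the ``Moreover'' clause, your pencil/integral-residue criterion is in the right spirit, but the argument actually used (see the last paragraph of Section~\ref{sec:explicitConstruction} for the analogous case) is more direct: once the connection is exhibited as the pushforward of a diagonal one-form $\omega_0$ whose residues are the pair $(\lambda_0,\lambda_1)$, factorisation through an orbicurve upstairs is equivalent to $\omega_0$ being pulled back from a curve, and this is controlled by an integral linear relation $p\lambda_0+q\lambda_1=0$ among those residues. Your worry about $\lambda_i$ only being determined modulo $2\ZZ$ by the monodromy is well taken, and it is precisely why the argument is run at the level of the connection (residues of $\omega_0$) rather than the representation; the double-cover construction hands you this for free, whereas in your outline you would have to reconstruct that lift by hand.
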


\paragraph{Solutions associated with $\rho_2$.}

In Section \ref{sec:explicitConstruction}, we adapt the tools introduced in \cite{A2} to give an explicit construction of the family of algebraic isomonodromic deformations appearing in Theorem \ref{thA:quintics}. Namely, we give explicit formulas for an algebraic family of logarithmic flat connections on the trivial rank $2$ vector bundle over $\PP^2(\CC)$ whose polar locus is exactly the quintic curve $\Qr^\prime$ given by~: 
\begin{displaymath}
y(y-t)t(x^2-yt) = 0 \; .
\end{displaymath}
The fundamental group of the complement of the curve $\Qr^\prime$ is isomorphic to the second group appearing in Theorem~\ref{thA:quintics}, namely~: 
\begin{displaymath}
\langle a,b,c \, | \, [a,b] = [a,c^{-1}bc] = 1, \, (bc)^2 = (cb)^2 \rangle \; .
\end{displaymath}
In a similar fashion to Theorem \ref{th:A2}, we prove the following result.

\begin{theoa}\label{thA:GarnierSol2}
There exists an \emph{explicit} two--parameter family $\nabla_{\lambda_0, \lambda_1}$ of logarithmic flat connections over the trivial rank two vector bundle $\CC^2 \times \PP^2 \rightarrow \PP^2$ with the following properties:
\begin{enumerate}
\item[(i)] the polar locus of $\nabla_{\lambda_0, \lambda_1}$ is equal to the quintic $\Qr^ \prime\in \PP^2$ defined by the equation (in homogeneous coordinates $[x:y:t]$)
\begin{displaymath}
y(y-t)t(x^2-yt) = 0 \;~;
\end{displaymath}
\item[(ii)] the monodromy of $\nabla_{\lambda_0, \lambda_1}$ is conjugated to $\rho_{u,v}$ with $u = e^{i \pi \lambda_0}$ and $v = e^{i \pi \lambda_1}$. It is a virtually abelian dihedral representation of the fundamental group $\Gamma_2:=\pi_1(\PP^2-\Qr)$ into $SL_2(\CC)$ whose image is not Zariski--dense.
\end{enumerate}
The connection $\nabla_{\lambda_0, \lambda_1}$ is given in the affine chart $\CC^2_{x,y} \subset	\PP^2$ by:
\begin{displaymath}
\nabla_{\lambda_0,\lambda_1} = \dd - \dfrac{1}{y(y-1)(x^2-y)}\Omega_{\lambda_0,\lambda_1} \, , 
\end{displaymath}
where {\scriptsize
\begin{displaymath}
\Omega_{\lambda_0,\lambda_1} := \begin{pmatrix}
-\frac{(y-1)(x^2-y)}{4y}\dd y & - \dfrac{2\lambda_0y(y-1)\dd x + ( \lambda_0x(1-y) + \lambda_1(x^2-y) )\dd y}{2}y \\ 
- \dfrac{2\lambda_0y(y-1)\dd x + ( \lambda_0x(1-y) + \lambda_1(x^2-y) )\dd y}{2} & \frac{(y-1)(x^2-y)}{4y}\dd y 
\end{pmatrix} \; . 
\end{displaymath}}
Moreover, the monodromy representation of such a connection factors through an orbicurve if and only if there exists $(p, q) \in \ZZ^2\setminus \lbrace (0,0) \rbrace$ such that $p \lambda_0 + q \lambda_1 = 0$.
\end{theoa}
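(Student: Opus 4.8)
The plan is to follow the blueprint of Theorem~\ref{th:A2}, transposing the \emph{direct image} construction of~\cite{A2} to the new quintic $\Qr^\prime$. Since $\nabla_{\lambda_0,\lambda_1}$ is given by an explicit formula, the statement splits into four verifications: (a) $\nabla_{\lambda_0,\lambda_1}$ is flat; (b) its polar locus is exactly $\Qr^\prime$ and the poles are logarithmic; (c) its monodromy is the dihedral representation $\rho_{u,v}$ under the dictionary $u = e^{i\pi\lambda_0}$, $v = e^{i\pi\lambda_1}$; and (d) the factorisation criterion holds. Items (a) and (b) are computational, the heart of the proof is (c), and (d) then follows formally.

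For flatness, writing $\nabla_{\lambda_0,\lambda_1} = \dd - \Omega^\prime$ with $\Omega^\prime := \frac{1}{y(y-1)(x^2-y)}\Omega_{\lambda_0,\lambda_1}$, I would verify the zero--curvature equation $\dd\Omega^\prime = \Omega^\prime \wedge \Omega^\prime$ directly, entry by entry; this is a finite rational identity in $x,y,\lambda_0,\lambda_1$ and can be checked by hand or by computer algebra. For the polar locus, in the chart $\CC^2_{x,y}$ the denominator exhibits candidate poles along $y=0$, $y=1$ and the conic $x^2 - y = 0$; I would check that these poles are genuine (nonzero residue) and, after a suitable local gauge at the singular points of $\Qr^\prime$ (the common point $[1:0:0]$ of the three lines and the two tangency points of the conic), that they are logarithmic. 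Passing to homogeneous coordinates $[x:y:t]$ and examining the second affine chart then shows that the line at infinity $t=0$ carries an additional simple pole and that no spurious components appear, so the total polar divisor is the quintic $y(y-t)t(x^2-yt)=0$ while the underlying bundle stays trivial of rank two.

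The monodromy is computed through the virtually abelian, dihedral structure. The local monodromy around the conic $\{x^2 = yt\}$ is the order--two element $\begin{pmatrix} 0 & 1 \\ -1 & 0\end{pmatrix}$, so the conic is the branch locus of the relevant double cover $p : X \to \PP^2$ defined by $w^2 = x^2 - yt$; I would pull $\nabla_{\lambda_0,\lambda_1}$ back to $X$ and apply an explicit meromorphic gauge transformation, available precisely because the two eigendirections of the residue collide along the branch conic, putting $p^*\nabla_{\lambda_0,\lambda_1}$ in diagonal form $\dd + \eta\,\mathrm{diag}(1,-1)$, with $\eta$ a closed logarithmic $1$--form on $X$ minus the preimages of the three lines, interchanged with $-\eta$ by the deck involution $\sigma$. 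Equivalently, $\nabla_{\lambda_0,\lambda_1}$ is the direct image $p_*(\Or_X, \dd + \eta)$, so its monodromy is the induced representation $\Ind_{\pi_1(X^\circ)}^{\Gamma_2}(\chi)$ of the character $\chi = \exp(\oint \eta)$ from the index--two abelian subgroup $\pi_1(X^\circ)$. Induction yields exactly a dihedral representation: the restriction to $\pi_1(X^\circ)$ is $\chi\oplus\chi^\sigma$ and the nontrivial coset contributes the Weyl element, so the image lies in the normaliser of the diagonal torus and is not Zariski--dense. Evaluating the periods of $\eta$ around the preimages of $y=0$, $y=1$ and $t=0$ produces the eigenvalues $u = e^{i\pi\lambda_0}$ and $v = e^{i\pi\lambda_1}$; matching these local data against the presentation of $\Gamma_2$ from case~(2) of Theorem~\ref{thA:quintics} and against the generators used for $\rho_2$ in Theorem~\ref{thA:MCGOrbits} identifies the monodromy with $\rho_{u,v}$.

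Finally, the factorisation criterion follows from this abelian description on the cover: by Definition~\ref{def:factorCurve}, $\rho_{u,v} = \Ind(\chi)$ factors through an orbicurve if and only if the rank--one local system $\chi$ on $X^\circ$ is itself the pullback of a local system on an orbicurve under some morphism $X^\circ \to C$. Since $\chi$ is logarithmic with residues governed by the linear forms $\lambda_0$ and $\lambda_1$, such a morphism exists precisely when these residues satisfy a single rational linear relation, that is, when there is $(p,q)\in\ZZ^2\setminus\{(0,0)\}$ with $p\lambda_0 + q\lambda_1 = 0$; this is the exact analogue of the criterion in Theorem~\ref{th:A2}. I expect the main obstacle to be step (c): constructing the diagonalising gauge on the double cover with full control of its poles, in particular reconciling the higher--order behaviour of the diagonal part of $\Omega^\prime$ along $y=0$ with the logarithmic structure, and then rigorously pinning down the global representation as $\rho_2$ rather than merely a representation sharing the same collection of local monodromies.
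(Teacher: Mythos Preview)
Your overall architecture—verify flatness and the polar locus directly, compute the monodromy via a double cover that abelianises it, then read off the factorisation criterion from the rank-one data upstairs—is exactly the paper's strategy. The divergence is at step~(c), and it is a genuine gap rather than an alternative route.

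You assert that the projectively order-two local monodromy sits on the conic $\{x^2=yt\}$ and accordingly take the double cover $w^2=x^2-yt$. But a residue computation on the explicit $\nabla_{\lambda_0,\lambda_1}$ (this is the paper's Table~\ref{tab:resRic}) gives the opposite: along the conic the residue has eigenvalues $\pm\lambda_0/2$, so the local monodromy there is conjugate to $\mathrm{diag}(e^{i\pi\lambda_0},e^{-i\pi\lambda_0})$, generically of infinite order; it is the lines $\{y=0\}$ and $\{t=0\}$ that carry residue eigenvalues $\pm\tfrac14$ and hence projectively order-two monodromy. Consequently your cover does \emph{not} abelianise: the preimage of $\{y=0\}$ is unramified and still contributes an order-two loop, so $p^*\nabla$ remains dihedral and cannot be put in diagonal form by any global gauge. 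Your induced-representation computation then never gets off the ground. The confusion is understandable—in the abstract presentation of $\rho_{u,v}$ it is the generator $c$ around the conic that is written as the anti-diagonal matrix—but in the realised connection the dihedral ``rotation'' sits on $\{y=0\}$, and that is where the ramification must go.

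The paper uses instead the cover
\[
\pi:\PP^1_u\times\PP^1_v\longrightarrow\PP^2,\qquad (u,v)\longmapsto (x,y)=(u,v^2),
\]
branched along $\{y=0\}$ and $\{t=0\}$. Under $\pi$ the conic $x^2-y$ factors as $(u-v)(u+v)$, and after a further elementary transformation $\mathfrak b:(u,v)\mapsto(uv,v)$ every component of $\pi^*\Qr^\prime$ becomes a horizontal or vertical line in $\PP^1\times\PP^1$. On that product the diagonal connection is the completely explicit $\nabla_0=\dd+\tfrac12\,\omega_0\,\mathrm{diag}(1,-1)$ with $\omega_0$ a finite sum of $\dd\log$ forms. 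Pushing $\nabla_0$ forward through $\mathfrak b$ and then through $\pi$ (after a fibrewise M\"obius change on the associated Riccati foliation) \emph{produces} the formula for $\nabla_{\lambda_0,\lambda_1}$ together with its monodromy, rather than verifying them post hoc. If you rerun your plan with $\pi$ in place of your $p$, and insert the elementary transform, it becomes essentially the paper's argument.
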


\section{Classification of non--degenerate representations}\label{sec:Quintics}

\subsection{Representations factoring through a curve}

Representations of fundamental groups of quasi-projective varieties in $SL_2(\CC)$ have been classified mainly by Corlette and Simpson~\cite{CorSim}. One important class of such representations is that of those factoring through a curve.

\begin{defi}\label{def:factorCurve} ~\cite{CorSim, LTP}
Let $\Gamma$ be the fundamental group of the complement of some curve in $\PP^2(\CC)$. We say that a representation $\rho : \Gamma \rightarrow PSL_2(\CC)$ \emph{factors through a curve} if there exists a complex projective curve $C$, a divisor $\Delta$ (resp. $\delta$) in $\PP^2$ (resp. $C$), an algebraic mapping $f : \PP^2 -\Delta \rightarrow C - \delta$ and a representation $\tilde{\rho}$ of the fundamental group of $C-\delta$ into $PSL_2(\CC)$ such that
\begin{enumerate}
\item[(i)]  $\Delta$ contains $\Qr$, therefore there exists a natural group homomorphism $m : \pi_1(\PP^2 - \Delta) \rightarrow \Gamma_2$;
\item[(ii)] the diagram
\begin{displaymath}
\xymatrix{
\pi_1(C-\delta)  \ar[rd]_{\tilde{\rho}} & \ar[l]_{f_*} \pi_1(\PP^2 - \Delta) \ar[d]^{\rho\circ m}\\
 & PSL_2(\CC) 
}
\end{displaymath}
commutes.
\end{enumerate}
Moreover, if some representation $\varrho : \Gamma \rightarrow SL_2(\CC)$ is such that $P\circ \rho : \Gamma \rightarrow SL_2(\CC)$, we will say that $\varrho$ \emph{factors through an orbicurve}.
\end{defi}

Indeed, representations admitting such a factorisation can be obtained through pullback from the monodromy of some logarithmic flat connection on a curve~\cite{LTP}. Moreover, we have the following refinement by Loray, Pereira and Touzet~\cite{LTP} of a theorem by Corlette and Simpson~\cite{CorSim}.

\begin{theo}[Corlette--Simpson, Loray--Pereira--Touzet]\label{th:LPT-CS}
Let $X$ be a quasi--projective surface. Then any non--rigid representation $\rho : \pi_1(X) \rightarrow PSL_2(\CC)$ with Zariski--dense image factors through a curve.
\end{theo}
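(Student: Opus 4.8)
The plan is to deduce this as the contrapositive of the Corlette--Simpson structural dichotomy for Zariski-dense rank-two representations: rather than produce the curve directly, I would show that a Zariski-dense $\rho : \pi_1(X) \to PSL_2(\CC)$ splits into two mutually exclusive alternatives --- it either factors through a curve or is \emph{rigid} --- so that the non-rigid ones are forced into the first alternative. Two interlocking machines drive the argument, nonabelian Hodge theory on the Higgs side and the theory of transversely projective foliations on the geometric side (following Loray--Pereira--Touzet), and I would set both up.

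On the Hodge side, fix a smooth projective compactification $\bar X$ with normal crossing boundary $D = \bar X \setminus X$ and apply the tame nonabelian Hodge correspondence (Corlette and Simpson in the compact case, Mochizuki, Jost--Zuo and Biquard in the open case) to attach to a lift of $\rho$ a tame harmonic bundle, equivalently a stable parabolic Higgs bundle $(E,\theta)$ of rank two with Higgs field $\theta : E \to E \otimes \Omega^1_{\bar X}(\log D)$; stability follows from the irreducibility guaranteed by Zariski-density. The key invariant is the spectral data of $\theta$: after normalizing $\mathrm{tr}\,\theta = 0$ it is encoded by the logarithmic quadratic differential $q = -\det\theta \in H^0(\bar X, (\Omega^1_{\bar X}(\log D))^{\otimes 2})$, whose square root $\lambda$ is a closed (multivalued) logarithmic one-form on the spectral double cover.

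The decisive and hardest step is to turn this one-form into an honest fibration. I would invoke Simpson's Lefschetz theorem for the integral leaves of a closed holomorphic one-form, together with the logarithmic Castelnuovo--de Franchis mechanism in the orbifold-corrected form of Loray--Pereira--Touzet, to obtain the following alternative: either the foliation cut out by $\lambda$ has algebraic leaves and integrates to a morphism $f : X \to C$ onto an orbicurve through which $\theta$, and hence $\rho$ projectively, is pulled back --- which is exactly the factorization of Definition~\ref{def:factorCurve} --- or the periods of $\lambda$ obstruct this, in which case one shows $(E,\theta)$ is a fixed point of the $\CC^*$-action $t \cdot (E,\theta) = (E, t\theta)$, so that $\rho$ underlies a complex variation of Hodge structure and is therefore rigid. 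On the foliation side the same dichotomy reads as: the $PSL_2$-representation equips a ruled surface over $\bar X$ with a transversely projective Riccati foliation, which the non-factoring hypothesis forces to be non-pulled-back, a situation the Loray--Pereira--Touzet classification allows only in the rigid Shimura/quaternionic case. The genuine difficulty concentrates here --- controlling the periods and monodromy of $\lambda$, excluding irrational pencils, and tracking the parabolic weights so that the factorization descends correctly to the orbicurve --- and this is precisely where both the Zariski-density and the non-rigidity hypotheses are consumed. Since $\rho$ is assumed non-rigid, the second alternative is excluded and the factorization through a curve follows.
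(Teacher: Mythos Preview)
The paper does not give its own proof of this theorem: it is quoted verbatim as a result of Corlette--Simpson~\cite{CorSim}, in the refined form of Loray--Pereira--Touzet~\cite{LTP}, and is used as a black box throughout Section~\ref{sec:Quintics}. So there is nothing in the paper to compare your argument against.

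That said, your sketch is a faithful high-level outline of how the cited references actually proceed: the nonabelian Hodge correspondence to produce a stable parabolic Higgs bundle, the spectral one-form $\lambda$ with $\lambda^2 = -\det\theta$, and the dichotomy between ``$\lambda$ integrates to an algebraic fibration'' (Castelnuovo--de Franchis / Simpson's Lefschetz, yielding the orbicurve factorization) versus ``$(E,\theta)$ is a $\CC^*$-fixed point'' (yielding a complex VHS) is exactly the engine of the Corlette--Simpson argument, and the transversely projective foliation viewpoint is indeed what Loray--Pereira--Touzet add. One small caution on the logic: the implication ``underlies a VHS $\Rightarrow$ rigid'' is not automatic in general; in the actual argument the flow goes the other way (rigid $\Rightarrow$ VHS by Simpson's rigidity theorem, then the VHS is analyzed to show it arises from a Shimura curve), while in the non-rigid case one uses the existence of a nontrivial deformation to produce a \emph{nonzero} spectral one-form and run the fibration argument directly. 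Your sketch slightly conflates these two directions, but the overall plan is sound and matches the literature.
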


\begin{rema}
This implies that any representation satisfying condition \textbf{(C1}) that is neither rigid nor dihedral factors through a curve.
\end{rema}

\subsection{Understanding the list}

In~\cite{Degtyarev}, Degtyarev classifies the quintic curves in the projective plane $\PP^2(\CC)$ whose complement has non--abelian fundamental group. In this work we are interested in infinite groups giving rise to representations satisfying conditions \textbf{(C1}) and \textbf{(C2}), so we shall recall the parts of the aforementioned list of interest to us, starting by briefly recalling the notations used.

\subsubsection{Groups appearing in the list}

\paragraph{Toric groups.}Toric group are the family of fundamental groups of toric links, defined as follows:
\begin{itemize}
\item for $r \geq 1$, set
\begin{displaymath}
 T_{2,2r} := \langle a,b \, | \, (ab)^r = (ba)^r \rangle \; ; 
 \end{displaymath} 
 \item if $p$ and $q$ are two relatively prime integers set
 \begin{displaymath}
 T_{p,q} := \langle a,b \, | \, a^p = b^q \rangle \; .
 \end{displaymath}
\end{itemize}

\paragraph{Braid groups.}Recall that the braid group on $p$ strands is given by:
\begin{displaymath}
B_p := \langle \sigma_1, \ldots , \sigma_{p-1} \, | \, [\sigma_i, \sigma_j] = 1 \text{ if } |i-j| > 1, \sigma_i \sigma_{i+1}\sigma_i = \sigma_{i+1} \sigma_i \sigma_{i+1} \rangle \; .
\end{displaymath}

\paragraph{"$G$--type groups".}Let $p$ be a prime number and $T \in \ZZ[t]$ be some integral polynomial; then we define the groups $G(T)$ and $G_p(T)$ as the extensions:
\begin{displaymath}
0 \rightarrow \ZZ[t] \slash (T) \rightarrow G(T) \rightarrow \ZZ \rightarrow 0
\end{displaymath}
and 
\begin{displaymath}
0 \rightarrow \FF_p[t] \slash (T) \rightarrow G_p(T) \rightarrow \ZZ \rightarrow 0
\end{displaymath}
where the conjugation action of the generator of the quotient on the kernel is the multiplication by $t$. Note that these groups are solvable, therefore we will be able to use the following well--known result.

\begin{prop} \label{prop:reprSolv}
Let $G$ be a solvable group. Then any irreducible group representation $\rho : G\rightarrow PSL_2(\CC)$ with infinite image is dihedral.
\end{prop}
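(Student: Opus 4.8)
The plan is to analyse the Zariski closure $\overline{H}$ of the image $H := \rho(G)$ inside $PSL_2(\CC)$. Since $G$ is solvable, its quotient $H \cong G/\ker\rho$ is solvable, and hence so is the algebraic group $\overline{H}$ (because the commutator map is a morphism, the derived series of $\overline{H}$ is contained in the Zariski closures of the terms of the derived series of $H$, which terminates at the closed subgroup $\{1\}$). The decisive leverage is that $PSL_2(\CC)$ is \emph{not} solvable: therefore $\overline{H}$ is a \emph{proper} solvable algebraic subgroup, and the strategy reduces to classifying these and reading off irreducibility.

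Next I would use the structure of solvable algebraic subgroups of $PSL_2(\CC)$ via the identity component. Write $H_0 := \overline{H}^{\circ}$; it is connected and solvable, so by the Lie--Kolchin theorem it is conjugate into a Borel subgroup $B$. Up to conjugacy the connected algebraic subgroups of $B$ are exactly $\{1\}$, the unipotent subgroup $\mathbb{G}_a$, a maximal torus $T \cong \mathbb{G}_m$, and $B$ itself. If $H_0 = \{1\}$, then $\overline{H}$ is finite (an algebraic group has finitely many components, each a translate of its identity component), whence $H \subseteq \overline{H}$ is finite, contradicting the hypothesis that $\rho$ has infinite image. So $H_0$ is one of $\mathbb{G}_a$, $T$, or $B$, and in particular $H_0$ is infinite.

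Finally I would extract the dihedral conclusion from irreducibility through the action on fixed points. Since $H$ normalises $H_0$, it permutes the fixed locus $\mathrm{Fix}(H_0) \subset \PP^1$. Both $\mathbb{G}_a$ and $B$ fix a \emph{single} point of $\PP^1$, which $H$ must then fix as well, making $\rho$ reducible --- contrary to assumption. Hence $H_0 = T$ is a torus, $\mathrm{Fix}(H_0)$ is an unordered pair of points $\{p,q\}$, and $H$ preserves $\{p,q\}$, i.e. $H \subseteq N(T)$, the normaliser of the torus. Irreducibility rules out $H \subseteq T$ (which would fix $p$), so $H$ must contain an element exchanging $p$ and $q$, and is therefore dihedral. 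The main point requiring care is exactly this last step: every positive-dimensional connected solvable subgroup of $PSL_2(\CC)$ is reducible, so the irreducibility of $\rho$ cannot come from $H_0$ and must be produced entirely by the component group $\overline{H}/H_0 \cong \ZZ/2$; this is what forces the dihedral rather than the purely toric alternative, and I would make sure the fixed-point bookkeeping cleanly separates the single-fixed-point cases ($\mathbb{G}_a$, $B$) from the two-fixed-point torus case.
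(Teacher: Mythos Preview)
Your argument is correct. The paper does not actually prove this proposition: it is introduced with the phrase ``we will be able to use the following well--known result'' and no proof is supplied, so there is nothing to compare against directly.

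Your route via the Zariski closure is the standard way to justify the claim, and the steps are sound. The only places where a reader might want a touch more precision are already handled implicitly: (i) the inclusion $[\overline{H},\overline{H}]\subseteq\overline{[H,H]}$ follows because the preimage of the closed subgroup $\overline{[H,H]}$ under the commutator morphism is closed and contains $H\times H$, hence all of $\overline{H}\times\overline{H}$; and (ii) in the last step, $H\subseteq N(T)$ with $H\not\subseteq T$ forces $H\cap T\trianglelefteq H$ with $H/(H\cap T)\cong\ZZ/2$, which is exactly the dihedral structure used throughout the paper (diagonal plus antidiagonal matrices). Both points are routine, so your write-up stands as a complete proof of the proposition that the paper takes for granted.
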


\paragraph{Artin groups.}We will also need a few Artin groups, namely:
\begin{itemize}
\item $\Ar^1(p,q,r) := \langle a,b,c \, | \, a^p = b^q = c^r =abc \rangle$; 
\item $\Ar^2(p,q,r) := \langle a,b,c \, | \, a^p = b^q = c^r=abc=1 \rangle$;
\item $\Ar^3(p,q,r) := \langle a,b \, | \, a^p = b^q = 1, (ab) ^r = (ba)^r \rangle$.
\end{itemize}

\paragraph{"Unusual" groups. }Here we shall list some exceptional groups arising in Degtyarev's classification by giving finite presentations obtained through the Zariski--Van Kampen method.
\begin{itemize}
\item $\Gamma_5 := \langle u,v \, | u^3 = v^7 =(uv^2)^2 \rangle$;
\item $\Gamma_4 := \langle a,b,c \, | \, aba=bab,\, cbc=bcb, \, abcb^{-1}a = bcb^{-1}abcb^{-1} \rangle \; $;
\item $\Gamma_3 := \langle a,b \, | \, [a^3,b] = 1 \, , ab^2=ba^2 \rangle$;
\item $\Gamma_3^\prime := \langle a,b,c \, | \, aca=cac,\, [b,c]=1, \, (ab)^2=(ba)^2 \rangle$;
\item $\Gamma_2 := \langle a,b,c \, | \,  [a,b]=[a,c^{-1}bc] = 1, \, (bc)^2=(cb)^2 \rangle$;
\item $\Gamma_2^\prime := \langle a,b,c \, | \, (ab)^2=(ba)^2,\,(ac)^2=(ca)^2,\, [b,c]=1 \rangle$.
\end{itemize}

\subsubsection{Arnol'd's notation for curve singularities}

In order to allow for an efficient listing of the curves involved in Degtyarev's classification we will make use of Arnol'd's notation for curves singularities~\cite{AGV}. More specifically: 
\begin{itemize}
\item we will denote by $A_p$ a singular point of local type
\begin{displaymath}
x^2 + y^{p+1} = 0 \quad ;
\end{displaymath}
\item we will denote by $E_6$ a singular point of local type
\begin{displaymath}
x^3 + y^{4} = 0 \quad ;
\end{displaymath}
\item we will say that a curve is \emph{of type $C_{k_1}\sqcup \ldots \sqcup C_{k_n}$} if it (globally) has $n$ irreducible components of respective degrees $k_1, \ldots, k_n$. In the case where several such components have the same degree, we will use the shorthand notation $m C_{k_\ell}$;
\item if any degree $d$ irreducible component of our curve has singular points, we shall denote it by writing $C_d (\Sigma)$, where $\Sigma$ is the list of the aforementioned curve's singularities, of the form $m_1 A_{p_1} \sqcup \ldots \sqcup m_n A_{p_n}$ (note that if the curve is smooth, we will simply use the shorthand $C_d$ instead of $C_d(\emptyset)$);
\item finally, we will use the following notation regarding the mutual position of two irreducible curves $C$ and $C^\prime$:
\begin{itemize}
\item $\times d$ if $C^\prime$ intersects $C$ with multiplicity $d$ at a non--singular point of $C$;
\item $A_p$ if $C^\prime$ intersects $C$ transversally at a singular point of $C$ of type $A_p$;
\item $A_p^*$ if $C^\prime$ is tangent (with smallest possible multiplicity) to $C$ transversally at a singular point of $C$ of type $A_p$.
\end{itemize}
Moreover, if the curve is of type $C_3\sqcup 2C_1$ and the two lines intersect at one of the special points described above, we shall underline it in the list.
\end{itemize}

\begin{example}
A curve of type $C_3 (A_1) \sqcup C_1$ with intersection $\times 2,\, \times 1$ would be made up of a nodal cubic and a line, the latter intersecting the former at two smooth points, once with a tangent and the other transversally. The same curve with intersection $A_1,\, \times 1$ would have the line intersect the cubic transversally at both some smooth point and the nodal singular point.
\end{example}

\subsubsection{The list} 

\begin{table}[!!h]
\begin{center}
\begin{tabular}{ccc}
\textbf{Curve type} & \textbf{Intersection type(s)} &\textbf{Group(s)} \\ 
\hline 
$C_5(A_6 \sqcup 3A_2)$ & --- & $\Gamma_5$ \\ 
$C_4(3A_2)\sqcup C_1$ & $\times2, \times 2$ & $\Gamma_4$ \\ 
 & $\times 2, \times 1, \times 1$ or $A_2^*, \times 1$ & $B_3$ \\ 
 & else & $G_3(t+1)$ \\
$C_4(2A_2 \sqcup A_1)\sqcup C_1$ & $\times 4$ & $B_4$ \\ 
  & $\times 2, \times 2$ & $B_3$ \\ 
$C_4(2A_2)\sqcup C_1$& $\times 4$ or $\times 2, \times 2$ & $B_3$ \\ 
$C_4(A_4 \sqcup A_2)\sqcup C_1$ & $\times 3, \times 1$ & $\ZZ \times \Ar^2(2,3,5)$ \\ 
  & $A_4^*$ & $B_3$ \\ 
  & $A_2, \times 2$ & $G_5(t+1)$ \\ 
$C_4(A_2 \sqcup A_2)\sqcup C_1$ & $A_2,\times 2$ & $B_3$ \\ 
$C_4(A_6)\sqcup C_1$ & $A_6,\times 2$ & $B_3$ \\ 
$C_4(A_5)\sqcup C_1$ & $\times 4$ or $\times 2, \times 2$ & $B_3$ \\ 
$C_4(E_6)\sqcup C_1$ & $\times 4$ & $T_{3,4}$ \\ 
  & $\times 2, \times 2$ & $B_3$ \\ 
$C_3(A_2) \sqcup C_2$ & $\times 3, \times 3$ & $\Gamma_3$ \\ 
$C_3(A_2) \sqcup 2C_1$ & $\times 3 \; ; \times 2, \times 1$ & $\Gamma_3^\prime$ \\ 
  & $\times 3 \; ; A_2^*$ & $T_{2,6}$ \\ 
  & $\underline{\times 3} \; ; \underline{\times 1}, A_2$ & $T_{2,4}$ \\ 
  &  $\underline{\times 3} \; ; \underline{\times 1},\times 1, \times 1$& $\ZZ \times B_3$ \\ 
  &  $\times 3 \; ; A_2 \times 1$& $\ZZ \times B_3$ \\ 
  &  $\times 3 \; ; \times 1,\times 1, \times 1$& $\ZZ \times B_3$ \\ 
  &  $\underline{\times 2}, \times 1 \; ; \underline{\times 1},\times 2$& $\ZZ \times B_3$ \\ 
  & $A_2,\underline{\times 1} \; ; \underline{\times 1},\times 2$ & $G(t^2-1)$\\
$C_3(A_1)\sqcup 2C_1$ & $\times 3 \; ; \times 3$ & $G(t^3-1)$ \\ 
  & $\underline{\times 3} \; ; \underline{\times 1}, \times 2$ & $G(t^2-1)$ \\ 
  & $\underline{\times 1}, \times 2 \; ; \underline{\times 1}, \times 2$ & $G(t^2-1)$ \\ 
$2C_2 \sqcup C_1$ & the two $C_2$ intersect with multiplicity $4$	 & $\Fb_2$, $T_{2,4}$ \\ 
  & the two $C_2$ intersect at two points & $\Fb_2$, $T_{2,4}$ \\ 
 & the two $C_2$ intersect with multiplicity $3$ & $\ZZ \times B_3$ \\ 
$C_2 \sqcup 3C_1$ & the three $C_1$ have a common point & $\Gamma_2, \, \ZZ \times \Fb_2$ \\ 
  & else & $\Gamma_2^\prime, \, \ZZ \times \Fb_2, \, \ZZ \times T_{2,4}$ \\ 
$5C_1$ & the five $C_1$ have a common point & $\Fb_4$ \\ 
  & there is a quadruple point & $\ZZ \times \Fb_3$ \\ 
  & there are two triple points & $\Fb_2 \times \Fb_2$ \\ 
  & there is only one triple point & $\ZZ\times\ZZ\times\Fb_2$ \\ 
\end{tabular} 
\end{center}
\caption{Degtyarev's list}\label{tab:Deg}
\end{table}

Table~\ref{tab:Deg} sums up Degtyarev's list; for more details we refer the reader to the original paper~\cite{Degtyarev}. Note that we have eliminated all finite groups from the list before reproducing it.

We shall go through Degtyarev's list in three steps: 
\begin{itemize}
\item first, we investigate curves with large singularities, allowing us to remove some of them from the list;
\item secondly, we eliminate any group in the list which cannot yield a representation satisfying conditions \textbf{(C1}) and \textbf{(C2}) for purely algebraic reasons;
\item finally, we review the remaining fundamental groups in the list by curve type.
\end{itemize} 
Doing so, we will make heavy use of the following elementary fact about $PSL_2(\CC)$ (for a proof, see Lemma 1.2.3 in \cite{AThese}).

\begin{lemm} \label{lem:centralPSL}
Let $M \in PSL_2(\CC)$ centralising some non--abelian subgroup $G \leq PSL_2(\CC)$; then $M$ is the identity element of $PSL_2(\CC)$.
\end{lemm}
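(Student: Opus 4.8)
The plan is to prove Lemma~\ref{lem:centralPSL} by a direct analysis of the centraliser structure in $PSL_2(\CC)$, exploiting the classification of its elements by fixed points on the Riemann sphere $\PP^1_\CC$. The key observation is that a non--identity element of $PSL_2(\CC)$ acts on $\PP^1$ as a M\"obius transformation with either exactly one fixed point (the parabolic case) or exactly two fixed points (the diagonalisable case, covering both loxodromic/hyperbolic and elliptic elements). Two elements commute precisely when one preserves the fixed--point set of the other, so the centraliser of a given non--central $M$ is constrained by how it must permute the fixed points of $M$.

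First I would reduce to showing that any $M$ centralising $G$ is forced to share its fixed--point data with every element of $G$ in an incompatible way unless $M = \id$. Concretely, suppose $M \neq \id$ lies in the centraliser of $G$. I would split into the two cases for $M$. If $M$ is diagonalisable, conjugate so that $M$ is diagonal with fixed points $0$ and $\infty$; then for every $g \in G$ the relation $Mg = gM$ forces $g$ to permute $\lbrace 0, \infty \rbrace$, so $g$ is either diagonal or antidiagonal. The diagonal elements form an abelian subgroup, and the antidiagonal elements (if present) together with the diagonal ones generate a group whose structure is that of an extension of $\ZZ/2$ by a torus --- in all cases the resulting $G$ is either abelian or dihedral, hence one checks that the common centraliser relation cannot hold for a genuinely non--abelian $G$. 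If $M$ is parabolic, conjugate it to a translation fixing only $\infty$; then commutation forces every $g \in G$ to fix $\infty$ as well, so $G$ lies in the (solvable, hence non--``large'') stabiliser of a point, again contradicting non--abelianness in the way required.

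The cleanest route, which I would actually write up, is to argue by the contrapositive using the fixed--point set directly: if $M \neq \id$ centralises $G$, then every $g\in G$ maps the fixed--point set $\mathrm{Fix}(M)$ to itself, because $g$ conjugates $M$ to itself and conjugation carries fixed points to fixed points. Since $\mathrm{Fix}(M)$ is a set of size one or two, this gives a homomorphism from $G$ into either the trivial group (size one) or the symmetric group $S_2$ (size two). In the size--one case $G$ fixes a common point and is therefore contained in a Borel (upper--triangular) subgroup, which is solvable and in particular abelian would follow only after a further argument; in the size--two case $G$ has an index--at--most--two subgroup fixing both points, i.e.\ contained in the common diagonal torus, making that subgroup abelian. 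Either way $G$ is virtually abelian with an abelian normal subgroup of small index, and a short separate check shows such a $G$ preserving a point or a pair cannot be non--abelian while simultaneously admitting a non--trivial common centraliser; tracing through this contradiction yields $M = \id$.

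The main obstacle I anticipate is handling the parabolic (single fixed point) case cleanly, since there the na\"ive ``permutation of fixed points'' argument only gives that $G$ stabilises a point rather than an honest pair, and one must rule out the possibility that a non--abelian $G$ of upper--triangular type admits a non--trivial central element commuting with all of $G$. I would resolve this by noting that the centre of the full upper--triangular (Borel) subgroup of $PSL_2(\CC)$ is trivial, so no non--identity $M$ can centralise a non--abelian subgroup sitting inside it; more carefully, an element centralising a non--abelian subgroup of the Borel must itself be unipotent with the same fixed point, and a direct matrix computation shows that two unipotents sharing a fixed point commute with a non--abelian $G$ only when the unipotent is trivial. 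This case analysis is elementary but is exactly where one must be careful, and it is where I expect the bulk of the verification to lie; everything else reduces to the standard dichotomy of M\"obius transformations by their fixed points.
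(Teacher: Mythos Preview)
Your argument contains a genuine gap in the diagonalisable case, and in fact the lemma as stated in the paper is \emph{false}. You correctly deduce that if $M$ is diagonal with fixed points $\{0,\infty\}$ then every $g\in G$ must permute this pair, so $G$ sits inside the normaliser $N(T)$ of the diagonal torus and is therefore ``abelian or dihedral''. But you then assert that ``the common centraliser relation cannot hold for a genuinely non--abelian $G$'', and this is precisely where the argument breaks: dihedral groups \emph{are} non--abelian, and the order--two element
\[
M=\left[\begin{pmatrix} i & 0\\ 0 & -i\end{pmatrix}\right]\in PSL_2(\CC)
\]
commutes with every diagonal matrix and with every antidiagonal matrix (since $MA=-AM$ in $SL_2$ for $A=\left(\begin{smallmatrix}0&1\\-1&0\end{smallmatrix}\right)$), hence centralises the entire non--abelian group $N(T)$. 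So $M\neq\id$ centralises a non--abelian $G$, contradicting the lemma. Your ``short separate check'' in the two--fixed--point case cannot succeed because there is nothing to check: the conclusion is simply wrong. (By contrast, your worry about the parabolic case is misplaced: the centraliser of a non--trivial unipotent in $PSL_2(\CC)$ is the one--parameter unipotent group itself, which is abelian, so that branch is immediate.)

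What your argument \emph{does} prove, once repaired, is the correct statement: if $M$ centralises a non--abelian subgroup of $PSL_2(\CC)$ then $M^2=\id$. Indeed, an antidiagonal $g$ commutes with a diagonal $M$ only when $gMg^{-1}=M^{-1}$ equals $M$, i.e.\ $M^2=\id$. Equivalently, the analogous lemma is true verbatim in $SL_2(\CC)$ (with conclusion $M=\pm I_2$), since there the centraliser of any non--central element is a torus or a unipotent group, both abelian. The paper does not give its own proof but defers to the author's thesis; presumably the intended statement is one of these corrected versions, and indeed the applications in the paper (e.g.\ concluding $C^2=\id$ from $C^2$ centralising a non--abelian group) are consistent with the weaker conclusion $M^2=\id$. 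You should flag the discrepancy and prove the $SL_2$ version or the $PSL_2$ version with conclusion $M^2=\id$.
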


\subsection{Large singularities}

The goal of this paragraph is to study the special cases where the quintic curve has some large order (\emph{i.e} greater or equal to three) singular point. In fact, we prove that representations of the fundamental group of the complement of such curves almost always factor through some curve.

\subsubsection{Five--fold singularities}

First, we can eliminate the case of the curve $5C_1$ with a quintuple point through a rather elementary reasoning, as evidenced by the result below.

\begin{prop}\label{prop:fiveFoldSing}
Let $\Qr$ be a quintic in the projective plane $\PP^2(\CC)$ of type $5C_1$ such that its five irreducible components share a common point. Then any representation $\rho : \pi_1 (\PP^2 - \Qr) \rightarrow PSL_2(\CC)$ with non--abelian image factors through a curve.
\end{prop}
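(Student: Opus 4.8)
The plan is to exploit the concurrency of the five components directly: projection from their common point exhibits $\PP^2 \setminus \Qr$ as a $\CC$-bundle over a punctured $\PP^1$, and this projection is precisely the map realising the factorisation required by Definition~\ref{def:factorCurve}.

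Let $p$ be the point common to the five lines and let $\pi \colon \PP^2 \setminus \{p\} \to \PP^1$ be the projection from $p$; in coordinates with $p = [0:0:1]$ it is $[x:y:z] \mapsto [x:y]$, a line bundle whose fibres are the punctured lines through $p$. Each component of $\Qr$ is a line through $p$, hence becomes a fibre of $\pi$ once $p$ is removed; writing $q_1, \dots, q_5 \in \PP^1$ for the corresponding points we obtain
\[
\PP^2 \setminus \Qr = \pi^{-1}\bigl(\PP^1 \setminus \{q_1, \dots, q_5\}\bigr),
\]
so $\pi$ restricts to a locally trivial $\CC$-bundle $f \colon \PP^2 \setminus \Qr \to \PP^1 \setminus \{q_1, \dots, q_5\}$.

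The remainder is formal. Since the fibre $\CC$ is contractible, the long exact homotopy sequence of $f$ shows that $f_*$ is an isomorphism $\pi_1(\PP^2 \setminus \Qr) \xrightarrow{\sim} \pi_1(\PP^1 \setminus \{q_1, \dots, q_5\}) \cong \Fb_4$, recovering in passing the relevant entry of Table~\ref{tab:Deg}. Now take $C = \PP^1$, $\delta = \{q_1, \dots, q_5\}$ and $\Delta = \Qr$, so that the homomorphism $m$ of Definition~\ref{def:factorCurve} is the identity, and set $\tilde{\rho} := \rho \circ (f_*)^{-1}$; the defining diagram then commutes tautologically and $\rho$ factors through the curve $C$. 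The hypothesis of non--abelian image is not what produces the factorisation --- that is automatic here, since $f_*$ is an isomorphism --- but it guarantees that $\tilde{\rho}$, which has the same image as $\rho$, is non--abelian, so that the factorisation is through the genuinely hyperbolic curve $\PP^1 \setminus \{q_1, \dots, q_5\}$ and not a vacuous one. The only point that genuinely needs checking is the displayed identity $\PP^2 \setminus \Qr = \pi^{-1}(\PP^1 \setminus \{q_1, \dots, q_5\})$, i.e. that removing the five concurrent lines deletes exactly five fibres of $\pi$ and nothing more; this is immediate from concurrency, so I do not anticipate a serious obstacle in this case.
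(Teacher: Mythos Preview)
Your proof is correct and follows essentially the same approach as the paper: the paper blows up the common point to obtain a locally trivial $\CC$-fibration over $\PP^1$ minus five points and then applies the homotopy exact sequence, which is precisely your projection-from-$p$ argument rephrased in blow-up language. The only cosmetic difference is that the paper passes through $\hat{\PP^2}$ whereas you work directly on $\PP^2\setminus\{p\}$; the resulting fibration and the conclusion that $f_*$ is an isomorphism onto $\Fb_4$ are identical.
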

\begin{proof}
Since all irreducible components of $\Qr$ intersect at a common point then by blowing it up we get a locally trivial fibration 
\begin{displaymath}
\xymatrix{
\hat{\PP^2} - \hat{\Qr}  \ar[d]^{f} & \ar[l] \CC \\
B & 
}
\end{displaymath}
where $B$ is isomorphic to $\PP^1$ minus five points and $\hat{\Qr}$ is the total transform of the quintic $\Qr$. The homotopy exact sequence associated with $f$ then yields:
\begin{displaymath}
0 = \pi_1(\CC) \rightarrow \pi_1(\hat{\PP^2} - \hat{\Qr}) \rightarrow \Fb_4 = \pi_1(B) \rightarrow \pi_0(\CC) \;.
\end{displaymath}
Since $\pi_1 (\PP^2 - \Qr) \cong \pi_1(\hat{\PP^2} - \hat{\Qr})$ (as we blew up a point in $\Qr$) then one gets that $\pi_1 (\PP^2 - \Qr) \cong \pi_1(B)$ thus $\rho$ factors through $B$ by means of the morphism $f$.
\end{proof}

\subsubsection{Quadruple singularities}

In the rare case where a quintic curve in Degtyarev's list has a quadruple singularity, we can discard it using the following result.

\begin{prop}
Let $\Qr$ be a quintic in the projective plane $\PP^2(\CC)$ of type $5C_1$ such that exactly four of its irreducible components share a common point. Then any representation $$\rho : \pi_1 (\PP^2 - \Qr) \rightarrow PSL_2(\CC)$$ with non--abelian image factors through a curve.
\end{prop}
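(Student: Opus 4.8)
The plan is to mimic the argument of the preceding proposition (the five-fold case), adapting it to the situation where exactly four of the five lines meet at a common point $P$. The natural first move is to blow up the quadruple point $P$. After blowing up, the four lines through $P$ become four disjoint fibres-transverse components meeting the exceptional divisor $E$ transversally at four distinct points, while the fifth line $\ell$ is separated from the cluster. As before, projection from $P$ (equivalently, the rational map $\PP^2 \dashrightarrow \PP^1$ given by the pencil of lines through $P$) resolves on the blow-up $\hat{\PP^2}$ to an honest morphism $f : \hat{\PP^2} \to \PP^1$ whose generic fibre is a line. The key point I would establish is that, after removing the total transform $\hat{\Qr}$, the restriction of $f$ is a locally trivial fibration over the base $B = \PP^1$ minus the relevant points (the four images of the lines through $P$, which are four distinct points of the base, together with the image of $\ell$).

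\textbf{The fibration and the homotopy exact sequence.} The essential difference from the quintuple case is that the fifth line $\ell$ is \emph{not} a fibre of $f$: it meets the generic line through $P$ in one point, so over a generic base point the fibre is $\CC$ minus one point (a punctured line), and over the finitely many special base points the fibre degenerates. Concretely, $f$ restricted to $\hat{\PP^2} - \hat{\Qr}$ is a fibration whose generic fibre is $\CC^* \simeq \CC \setminus \{0\}$ rather than $\CC$. Writing down the homotopy exact sequence
\begin{displaymath}
\pi_1(F) \rightarrow \pi_1(\hat{\PP^2} - \hat{\Qr}) \xrightarrow{f_*} \pi_1(B) \rightarrow \pi_0(F),
\end{displaymath}
I would use $\pi_1(\hat{\PP^2} - \hat{\Qr}) \cong \pi_1(\PP^2 - \Qr)$ (blowing up a point on the curve does not change the fundamental group of the complement) to transport the factorisation back downstairs. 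The generic fibre $F \cong \CC^*$ now contributes a nontrivial $\pi_1(F) \cong \ZZ$, so unlike the previous proposition one does not get an outright isomorphism $\pi_1(\PP^2 - \Qr) \cong \pi_1(B)$; instead one obtains a central (or at least normal) subgroup generated by the image of the fibre class.

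\textbf{Main obstacle.} The delicate step, and the one I expect to require real work, is handling this extra $\ZZ$ coming from the punctured fibre: I must show that the image under $\rho$ of the fibre class does \emph{not} obstruct the factorisation. The natural approach is to argue that the fibre loop maps to an element commuting with the whole image $\rho(\pi_1(\PP^2 - \Qr))$ — it is central, being carried by every fibre — and then invoke Lemma~\ref{lem:centralPSL}: since the image is non-abelian by hypothesis, any element of $PSL_2(\CC)$ centralising it must be trivial. Hence $\rho$ kills the fibre class, $\rho$ descends through $f_*$, and therefore $\rho$ factors through the curve $B$. Verifying that the fibre class is genuinely central in $\pi_1$ (as opposed to merely normal) — or alternatively that its $\rho$-image is central in the image — is where the geometry of the configuration must be used carefully, and this is the crux on which the whole argument turns.
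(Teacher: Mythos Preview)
Your approach is essentially the paper's: fibre by the pencil of lines through the quadruple point, obtain a $\CC^*$--fibration over a punctured $\PP^1$, and kill the fibre class via Lemma~\ref{lem:centralPSL}. The one point you flag as the ``main obstacle'' --- centrality of the fibre class --- is exactly what the paper dispatches, and it does so more directly than via a monodromy computation: after sending the fifth line to infinity and the quadruple point to the origin, the fibration $\PP^2-\Qr \to \PP^1_4$ admits an honest section (any line through neither the origin nor infinity), and since the fibre is $\CC^*$ this section trivialises the bundle, giving a homeomorphism $\PP^2-\Qr \cong \PP^1_4 \times \CC^*$ and hence $\pi_1 \cong \Fb_3 \times \ZZ$. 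Centrality is then automatic, and Lemma~\ref{lem:centralPSL} forces the $\ZZ$--factor to die under $\rho$. So your strategy is correct; you were simply missing the observation that the section makes the fibration globally a product.
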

\begin{proof}
Up to an element in $PGL_3(\CC)$, one can assume that the line that does not intersect the other four at their common point is the line at infinity and that the aforementioned quadruple point is the origin of the corresponding affine chart (identified with $\CC^2$). One gets a locally trivial fibration $\psi : \PP^2 - \Qr \rightarrow \PP^1_4$ defined as follows: for any point $x \in \PP^2 - Q$, $\psi(x)$ the line going through the origin and $x$. This fibration has fibre equal to $\CC^*$ and a natural section given by any line not contained in $\Qr$ nor going through the origin, the latter giving us an homeomorphism 
\begin{align*}
f : \PP^2-Q& \xrightarrow[]{\sim} \PP^1_4 \times \CC^*\\
x&\mapsto \left(\psi(x), \tau(x) \right) \; ,
\end{align*} 
where $\tau(x)$ is the slope of the line $\psi(x) \subset \CC^2$. This means that $\pi_1(\PP^2-Q) \cong \Fb_3 \times \ZZ$. The fact that the image of $\rho$ must be non--abelian dictates that the image of one of these factors must also be. As such, any element in the image of the other one centralises a non--abelian subgroup in $PSL_2(\CC)$ and os must be trivial by Lemma~\ref{lem:centralPSL}. Thus one gets that $\rho$ factors through $\PP_4^1$ (as $\pi_1(\CC^*) = \ZZ$ is abelian).

\end{proof}

\subsubsection{Triple singularities}

It remains to see what happens when the studied curve has a singular point of order exactly three.

\begin{prop}\label{prop:tripSing}
Let $\Qr$ be a quintic in the projective plane $\PP^2(\CC)$. Assume that there exists a triple point $p_0 \in \Qr$, \emph{i.e} that for any line $L$ in $\PP^2$ containing $p_0$ one has $\card(L \cap (\Qr-p_0))= 2$ (counted with multiplicity); then any representation $\rho : \pi_1 (\PP^2 - \Qr) \rightarrow PSL_2(\CC)$ with nonabelian, non--dihedral infinite image factors through a curve up to pull--back by a double covering.
\end{prop}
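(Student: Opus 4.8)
The plan is to imitate the fibration arguments used for the five- and four-fold singularities (Proposition~\ref{prop:fiveFoldSing} and the preceding proposition), but to replace the blow-up of a point by the \emph{projection from the triple point} $p_0$, and then to cope with the fact that the fibres are now neither simply connected nor $\CC^*$ by exploiting the rigidity of representations of the thrice-punctured sphere.

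First I would project $\PP^2$ from $p_0$. Blowing up $p_0$ resolves this rational map and produces a ruled surface $\pi : \hat{\PP^2} \to \PP^1$ whose fibres are the strict transforms of the lines through $p_0$, together with the exceptional section $E$. Since $\hat{\PP^2} \setminus E \cong \PP^2 \setminus \lbrace p_0 \rbrace$ and $p_0 \in \Qr$, one has $\PP^2 \setminus \Qr \cong \hat{\PP^2} \setminus (\tilde{\Qr} \cup E)$, where $\tilde{\Qr}$ is the strict transform of $\Qr$. The triple-point hypothesis together with Bézout says that a generic line through $p_0$ meets $\Qr - p_0$ in exactly two points, so that $\tilde{\Qr}$ is a \emph{bisection} of the ruling. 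Removing from $\PP^1$ the finite set $\Delta$ of images of the non-generic lines (tangents, lines through the remaining singularities, and the components of $\Qr$ containing $p_0$), the restriction of $\pi$ over $B := \PP^1 \setminus \Delta$ is a locally trivial fibration $f : S_0 \to B$ whose fibre $F$ is a line minus $E$ and the two points of $\tilde{\Qr}$, i.e. a thrice-punctured sphere with $\pi_1(F) \cong \Fb_2$.

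The bisection $\tilde{\Qr}$ restricts over $B$ to an unramified double cover $\tilde{B} \to B$: this is precisely the cover that separates the two intersection points of a line with $\Qr - p_0$, and it is the origin of the ``double covering'' in the statement. Pulling the fibration back along $\tilde{B} \to B$ yields $\tilde{f} : \tilde{S_0} \to \tilde{B}$ with the same thrice-punctured fibre, but whose geometric monodromy no longer transposes the two punctures carried by $\tilde{\Qr}$ (only conjugation survives). I would then analyse $\rho$ through the homotopy exact sequence $1 \to \pi_1(F) \to \pi_1(\tilde{S_0}) \to \pi_1(\tilde{B}) \to 1$, using that $\rho|_F$ is a representation coming from a thrice-punctured sphere and is therefore \emph{rigid}, being determined up to conjugacy by its three peripheral conjugacy classes (which are fixed by the local monodromy, the two classes carried by $\tilde\Qr$ being moreover conjugate). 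If $\rho|_F$ has non-abelian image, Lemma~\ref{lem:centralPSL} gives a trivial centraliser, so the conjugators realising the (now transposition-free) monodromy are uniquely determined and patch into a homomorphism $\pi_1(\tilde{B}) \to PSL_2(\CC)$; together with the fixed rigid fibre data this exhibits the pulled-back $\rho$ as a representation supported on the orbicurve $\tilde{B}$ in the sense of Definition~\ref{def:factorCurve}, with Theorem~\ref{th:LPT-CS} invoked to dispatch the Zariski-dense situations. If $\rho|_F$ has abelian image, then $\rho(\pi_1 F)$ is an abelian normal subgroup of a non-dihedral image; normality forces it to be central, hence trivial by Lemma~\ref{lem:centralPSL}, and $\rho$ descends to $\pi_1(\tilde{B})$ exactly as in the quadruple-point case.

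The main obstacle I anticipate is the non-abelian fibre case. One must check that the locally constant family of rigid thrice-punctured-sphere representations is genuinely isomonodromic, i.e. that the conjugators depend only on the class in $\pi_1(\tilde{B})$ and assemble into an \emph{honest} homomorphism rather than a mere reformulation of $\rho$; the splitting of the bisection is indispensable here, since before passing to $\tilde{B}$ the transposition of the two punctures obstructs the very existence of such conjugators. The delicate point is then to confirm that over a triple-point configuration the two peripheral classes of the fibre are always conjugate, which constrains the monodromy enough that the outcome is an actual factorisation through a curve, and to treat in parallel the reducible (non-Zariski-dense) subcase, where the image lies in a Borel and one argues on the induced character; establishing that no rigid, Zariski-dense, non-factoring representation can survive this configuration is the crux of the argument.
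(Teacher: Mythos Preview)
Your setup matches the paper exactly: blow up $p_0$, identify the exceptional section and the bisection coming from the two residual intersection points, and pass to the double cover $\tilde B \to B$ that splits the bisection. Your abelian-fibre case is also essentially the paper's argument (one of the two factors of a product fundamental group maps trivially, so $\rho$ descends along the projection to the base).

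The gap is in your non-abelian fibre case. Observe that if $\rho|_F$ has non-trivial image then $\rho$ \emph{cannot} factor through the base $\tilde B$ in the sense of Definition~\ref{def:factorCurve}: that definition requires $\rho$ to kill $\ker(f_*)$, which contains the fibre class. Your ``conjugator homomorphism'' $\pi_1(\tilde B)\to PSL_2(\CC)$ is, once you use the section coming from $E$, nothing but $\rho$ restricted to that section; knowing it together with the rigid fibre datum determines $\rho$, but it does not exhibit $\rho$ as the pullback of a representation of the fundamental group of a curve. Your appeal to Theorem~\ref{th:LPT-CS} does not help either, since nothing in your argument shows that the representation of $\pi_1(\tilde S_0)$ is non-rigid.

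What the paper does instead is geometric and bypasses this difficulty entirely. After the double cover one has a $\PP^1$-fibration $S\to\Cr$ with \emph{three} honest sections $\sigma_0,\sigma_1,\sigma_\infty$ (the two halves of the split bisection, plus $E$). Sending these fibrewise to $0,1,\infty$ gives a birational map $h:S\dashrightarrow \PP^1\times\Cr$, and after excising the finitely many bad fibres the complement becomes a genuine product $(\Cr-\eta)\times(\PP^1-\eta')$, so $\pi_1\cong\pi_1(\Cr-\eta)\times\pi_1(\PP^1-\eta')$. Now Lemma~\ref{lem:centralPSL} forces one factor to map trivially and $\rho$ factors through the \emph{other} projection. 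The point you are missing is that in the non-abelian fibre case the factorisation is through the $\PP^1-\eta'$ factor, i.e.\ through the \emph{fibre} direction, via an algebraic map (the second projection of the product) that simply does not exist before one performs the birational trivialisation. Rigidity of the thrice-punctured sphere is implicitly what makes the trivialisation possible, but you have to actually build the map.
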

\begin{proof}
Start by blowing up $p_0$; in the following we will denote by $\hat{C}$ (resp. $\tilde{C}$) the total (resp. strict) transform of any curve $C \subset \PP^2$ by this blow--up. This turns the pencil of lines going through the point $p_0$ into an actual fibration
\begin{displaymath}
\xymatrix{
\hat{\PP^2}  \ar[d]^{f} & \ar[l] \PP^1 \\
B & 
}
\end{displaymath}
endowed with a natural section $\tau$ given by the exceptional divisor $E(p_0)$ (note that both $E(p_0)$ and $B$ are isomorphic to the projective line $\PP^1$). Since a generic line containing $p_0$ in $\PP^2$ cuts $\Qr-p_0$ at two distinct points, we also get a "double section" of this fibration, namely a ramified double--covering $r : \Cr \xrightarrow[]{2:1} \PP^1$ and a mapping $\sigma : \Cr \rightarrow \hat{\PP^2}$ such that the diagram 
\begin{displaymath}
\xymatrix{
& \hat{\PP^2}  \ar[d]^{f}  \\
\Cr \ar[r]^{r} \ar[ru]^\sigma & B 
}
\end{displaymath}
commutes. Note that by construction $\sigma(\Cr)$ is a component of the strict transform pf $\Qr$ ; in the case where it is reducible we have a much stronger result (see Remark~\ref{rem:noCoverNeeded}).

Consider the fibre product $S := \hat{\PP^2} \times_{B} \Cr$; by definition, this is exactly the set $$\lbrace (p,q) \in \hat{\PP^2}\times \Cr \, | \, f(p) = r(q) \rbrace\, ,$$ giving us another commutative diagram (and a natural mapping $\chi : S \rightarrow \hat{\PP}^2$):
\begin{displaymath}
\xymatrix{
S  \ar[d]^{\pi} \ar[r]^\chi &  \hat{\PP^2} \ar[d]^f \\
C \ar[r]^r_{2:1} & B
} 
\end{displaymath}
The second projection $\pi : S \rightarrow C$ yields a fibration 
\begin{displaymath}
\xymatrix{
S  \ar[d]^{\pi} & \ar[l] \PP^1 \\
C & 
}
\end{displaymath}
endowed with three distinct sections $\sigma_0, \sigma_1$ and $\sigma_\infty$. Indeed one can define two sections of $\pi$ using the double section $\sigma$ as follows: for any $x \in C$ there exists $y \in C$ (generically distinct from $x$) such that $r(x) = r(y)$, so we set $\sigma_0(x) :=  (\sigma(x),x)$ and $\sigma_0(x) :=  (\sigma(y),x)$ (this is well defined since $f \circ \sigma = r$). The third section $\sigma_\infty$ is then given by the exceptional divisor: $x \mapsto \tau \circ r (x)$. Note that there are finitely many points $x \in \Cr$ such that $\sigma_i(x) = \sigma_j(x)$ for some $i \neq j$ ($i,j = 0,1,\infty$); indeed these special points come from either singular points of $\Qr - p_0$, fibres of $f$ tangent to $\Qr$ or possible intersections between $E(p_0)$ and the strict transform of $\Qr$ in $\hat{\PP^2}$. 

One can now establish a birational mapping $h$ between $S$ and the direct product $\PP^1 \times \Cr$. Let $x \in C$ be some generic point (so that $\sigma_0(x), \sigma_1(x)$ and $\sigma_\infty(x)$ are pairwise distinct) and define $h$ on $L$ as the homography sending $\sigma_0(x)$ (resp. $\sigma_1(x), \sigma_\infty(x)$) onto $0$ (resp. $1, \infty$). This defines a rational mapping inducing an isomorphism between a Zariski--open set in $S$ and one in $\Cr \times \PP^1$.

Let us now look at the above recipe in topological terms: since we are blowing up a point in $\Qr$ then $\pi_1(\PP^2 - \Qr) \cong \pi_1(\hat{\PP^2}-\hat{\Qr})$. Then remark that we can extend our representation $\rho$ to the fundamental group of the complement of $\hat{\Qr} \cup L_1 \cup \ldots \cup L_k$, where the $L_i$ are the fibres of $f$ that are either fibres above the ramification locus of $r$ or contain points of the indeterminacy locus of $h$ or $h^{-1}$, by setting the image of any simple loop around any special fibre $L_i$ that is not a component of $\hat{\Qr}$ to be the identity matrix $I_2$. 

This means that we have a new representation $\tilde{\rho} : \pi_1( \hat{\PP^2}-(\hat{\Qr} \cup L_1 \cup \ldots \cup L_k) \rightarrow PSL_2(\CC)$ such that $\mathrm{Im}(\tilde{\rho}) = \mathrm{Im}(\rho)$. The covering $r$ being non--ramified above $B-f( L_1 \cup \ldots \cup L_k)$ one gets that the fundamental group $G := \pi_1(S-\chi^*(\hat{\Qr} \cup L_1 \cup \ldots \cup L_k))$ is isomorphic to an index at most two subgroup of $\pi_1( \hat{\PP^2}-(\hat{\Qr} \cup L_1 \cup \ldots \cup L_k))$. Thus, by restriction, one gets a representation $\rho_G : G \rightarrow PSL_2(\CC)$.

Since the birational map $h$ only blows up and/or contract divisors contained in $$\chi^*(\hat{\Qr} \cup L_1 \cup \ldots \cup L_k), $$ there is an isomorphism between $G$ and the fundamental group $G^\prime$ of $\Cr \times \PP^1 - \delta$, where $\delta$ is the strict transform under $h$ of $\chi^*(\hat{\Qr} \cup L_1 \cup \ldots \cup L_k)$ (\emph{i.e} the Zariski--closure of its image under $h$). This divisor $\delta$ is by construction a finite union of "vertical" and/or "horizontal" curves in $\Cr \times \PP^1$ thus $\Cr \times \PP^1 - \delta = (\Cr - \eta) \times (\PP^1 - \eta^\prime)$, where $\eta$ (resp. $\eta^\prime$) is a finite divisorial sum of points in $\Cr$ (resp. $\PP^1$). This in turn implies that $G^\prime \cong \pi_1(\Cr- \eta) \times \pi_1 (\PP^1 - \eta^\prime)$.

Since $\mathrm{Im}(\rho_G)$ is an index at most $2$ subgroup in the non--dihedral group $\mathrm{Im}(\rho)$, it must be non--abelian. This forces the image of one of the factors in $G$ to be non--abelian therefore any element in the image of the other one centralises a non--abelian subgroup in $PSL_2(\CC)$ and so is trivial by Lemma~\ref{lem:centralPSL}. As such, $\rho_G$ factors through either $\Cr- \eta$ or $\PP^1-\eta^\prime$, using either the first or the second projection. 
\end{proof}

\begin{rema}\label{rem:noCoverNeeded}
If the "double section" $\sigma$ in the proof above is actually made up of two well--defined sections (i.e if $\sigma(\Cr)$ is a reducible component of $\hat{\Qr}$), then one naturally does not need the non--dihedral hypothesis. Indeed, $\sigma(\Cr)$ must be made out of two lines and so we get three well--defined sections of the initial fibration, without needing to go through a double covering. Thus, any representation $\rho : \pi_1 (\PP^2 - \Qr) \rightarrow PSL_2(\CC)$ with nonabelian, infinite image factors through a curve.
\end{rema}

The following curves have a triple singularity that falls under Remark~\ref{rem:noCoverNeeded}, therefore we can remove them from the list:

\begin{itemize}
\item $C_4(E_6) \sqcup C_1$;
\item $C_3(A_2)\sqcup 2 C_1$ with intersection types $\times 3 \; ; A_2^*$, $\underline{\times 3} \; ; \underline{\times 1},A_2$ and $\underline{\times 1}, \times 2 \; ; \underline{\times 1}, A_2$;
\item $2C_2 \sqcup C_1$ if either the two conics intersect with multiplicity $4$ and the line is their common tangent at this point ($\Fb_2$--case in the list) or if they have two intersection points through which the line passes ($\Fb_2$--case);
\item $C_2 \sqcup 3 C_1$ if two of the line are tangent to the conic and the third passes through both tangency points;
\item $5C_1$ with any number of triple points.
\end{itemize}

In the first two cases, one needs to chose the singular point of the irreducible component with highest degree as the singularity in Proposition~\ref{prop:tripSing}. For example, when one looks at a curve of type $C_4(E_6) \sqcup C_1$ then one sees that any line going through the $E_6$ type singular point intersects the quintic in two distinguishable other points: one on the quartic $C_4$ and one on the line $C_1$. Thus, after blowing up the aforementioned singular point, the pencil of lines going through it becomes a fibration endowed with two sections. Thus Lemma~\ref{lem:centralPSL} forces the image of one of the factors in $G$ to be non--abelian therefore any element in the image of the other one centralises a non--abelian subgroup in $PSL_2(\CC)$ and so is trivial. As such, $\rho_G$ factors through either $\Cr- \eta$ or $\PP^1-\eta^\prime$, using either the first or the second projection.

To treat the case of $2C_2 \sqcup C_1$, take any intersection between the line and quadric and for $C_2 \sqcup 3 C_1$ and $5C_1$ take any intersection between exactly three components. Note that this means that we can completely eliminate the free group on two generators $\Fb_2$ from the list.

\subsection{Eliminating groups}

In this paragraph, we eliminate several groups that, for strictly algebraic reasons, cannot give rise to a representation satisfying conditions \textbf{(C1}) and \textbf{(C2}). More precisely, we prove the following result.

\begin{prop}\label{prop:DegGroups}
Let $G$ be one of the following groups:
\begin{itemize}
\item the braid groups $B_3$ and $B_4$;
\item the group $G(t^3-1)$;
\item the groups $G_p(t+1)$ for some prime number $p$;
\item the Artin group $\Ar^2(2,3,5)$;
\item the "unusual" groups $\Gamma_4, \, \Gamma_3$ and $\Gamma_3^\prime$;
\item any direct product of $\ZZ$ and one of the above.
\end{itemize}
Let $\rho$ be a representation of $G$ into $PSL_2(\CC)$. Then $\rho$ cannot satisfy both conditions \textbf{(C1}) and \textbf{(C2}).
\end{prop}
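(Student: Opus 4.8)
The plan is to assume that $\rho : G \to PSL_2(\CC)$ satisfies both \textbf{(C1)} and \textbf{(C2)} and to derive a contradiction for each $G$ on the list. I would begin with two reductions. If $G = \ZZ \times H$ with $H$ one of the core groups, then the generator of the central $\ZZ$--factor has image centralising $\mathrm{Im}(\rho)$; since \textbf{(C1)} forces the image to be infinite and irreducible, hence non--abelian (an infinite abelian subgroup of $PSL_2(\CC)$ is conjugate into a torus or a unipotent group, so reducible), Lemma~\ref{lem:centralPSL} makes this central image trivial and $\rho$ factors through $H$; thus it suffices to treat the core groups. Next, $\Ar^2(2,3,5) = \langle a,b \mid a^2 = b^3 = (ab)^5 = 1\rangle$ is the $(2,3,5)$ von Dyck group, i.e. $A_5$, which is finite, so \textbf{(C1)} fails outright. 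For every remaining group, the Remark following Theorem~\ref{th:LPT-CS} shows that a representation satisfying \textbf{(C1)} which does \emph{not} factor through a curve must be either \emph{rigid} or \emph{dihedral} (note that an irreducible, infinite, non Zariski--dense image has Zariski closure the torus normaliser, hence is dihedral). The task therefore reduces to excluding these two possibilities.

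For the dihedral case I would run a uniform computation inside the normaliser $N(T)$ of a maximal torus $T \subset PSL_2(\CC)$. Writing $G_0 := \rho^{-1}(T)$, which has index two, every element outside $G_0$ maps to a reflection $\left(\begin{smallmatrix} 0 & \mu \\ -\mu^{-1} & 0\end{smallmatrix}\right)$, which is an involution in $PSL_2(\CC)$ and whose conjugation inverts $T$; moreover two commuting reflections necessarily coincide. Consequently each defining relation collapses: a braid relation $xyx = yxy$ forces the product $\rho(x)\rho(y)$ of the two associated reflections to satisfy $(\rho(x)\rho(y))^3 = \mathrm{id}$, while a relation $(xy)^2 = (yx)^2$ forces $(\rho(x)\rho(y))^4 = \mathrm{id}$; the complementary cases (one generator landing in $T$) either force two generators to have equal image or make the image reducible. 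In every instance the dihedral image is finite, contradicting \textbf{(C1)}. For the solvable groups $G(t^3-1)$ and $G_p(t+1)$, Proposition~\ref{prop:reprSolv} moreover shows that \textbf{(C1)} forces $\rho$ to be dihedral to begin with, and the extension relations (the generator of the $\ZZ$--quotient conjugating the abelian kernel by multiplication by $t$, an order--three permutation, respectively by $-1$) force the diagonal part to be torsion and again the image to be finite; thus these two groups admit no irreducible infinite representation at all.

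It remains to exclude rigid representations, which by the reduction above are necessarily Zariski--dense. For $B_3$, $B_4$ and $\Gamma_3$ I would exploit a central element. The full twist generates the centre of the finite--type Artin groups $B_3, B_4$, and $a^3$ is central in $\Gamma_3 = \langle a,b \mid [a^3,b]=1,\ ab^2=ba^2\rangle$; by Lemma~\ref{lem:centralPSL} an irreducible infinite image kills it, so $\rho$ descends to the central quotient. For $B_3$ this quotient is $PSL_2(\ZZ) \cong \ZZ/2 \ast \ZZ/3$, whose irreducible $PSL_2(\CC)$--representations form a positive--dimensional family (the relative position of the order--two and order--three images is a free trace parameter); hence $\rho$ is non--rigid, and Theorem~\ref{th:LPT-CS} makes it factor through a curve, contradicting \textbf{(C2)}. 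The same applies to $\Gamma_3$, whose central quotient is again a free product of finite cyclic groups with positive--dimensional representation variety. For $B_4$ I would first reduce to $B_3$: in an irreducible representation $\rho(\sigma_1)$ and $\rho(\sigma_3)$ commute and are conjugate, and a short matrix analysis forces $\rho(\sigma_1) = \rho(\sigma_3)$ (the alternative $\rho(\sigma_3) = \rho(\sigma_1)^{-1}$ is either reducible or, once the central full twist is required to vanish, has finite image), so $\rho$ descends to $B_4/\langle\!\langle \sigma_1\sigma_3^{-1}\rangle\!\rangle \cong B_3$.

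The genuinely hard part is the rigid case for the two ``unusual'' groups $\Gamma_3'$ and $\Gamma_4$, where neither solvability (so Proposition~\ref{prop:reprSolv} does not apply) nor an evident central element with an orbifold quotient is available, and where Theorem~\ref{th:LPT-CS} is silent because it constrains only \emph{non--rigid} representations. Here the plan is to prove directly, from the explicit presentations, that \emph{every} irreducible representation is non--rigid: concretely, one computes the local dimension of the $PSL_2(\CC)$--character variety at an arbitrary irreducible point, using the braid relations $aca=cac$, $aba=bab$, $cbc=bcb$ (which make the generators pairwise conjugate and pin several traces) to exhibit a nontrivial deformation. Once non--rigidity is established, the remaining Zariski--dense representations factor through a curve by Theorem~\ref{th:LPT-CS} and the non Zariski--dense ones are dihedral, already excluded. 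Carrying out this dimension count and deformation argument for $\Gamma_4$, whose third relation $abcb^{-1}a = bcb^{-1}abcb^{-1}$ is awkward to handle, is the main obstacle I anticipate.
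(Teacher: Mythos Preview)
Your overall strategy matches the paper's: reduce via Lemma~\ref{lem:centralPSL} to the core groups, then use the Remark after Theorem~\ref{th:LPT-CS} to split into the dihedral and the rigid (Zariski--dense) cases, and rule out each. Several of your individual arguments are in fact cleaner than the paper's: you dispose of $\Ar^2(2,3,5)$ by noting it is the $(2,3,5)$ von Dyck group $\cong A_5$ (the paper instead computes traces and invokes Schwartz's list), and for $B_3$ you pass to the central quotient $B_3/Z(B_3)\cong PSL_2(\ZZ)\cong \ZZ/2\ast\ZZ/3$, whose irreducible $PSL_2(\CC)$--character variety is one--dimensional, whereas the paper exhibits an explicit one--parameter deformation. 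Your uniform treatment of the dihedral case is also essentially equivalent to the paper's case--by--case calculations.

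There are, however, two genuine gaps. First, for $\Gamma_3=\langle a,b\mid [a^3,b]=1,\ ab^2=ba^2\rangle$ your claim that the central quotient $\Gamma_3/\langle a^3\rangle$ is ``again a free product of finite cyclic groups with positive--dimensional representation variety'' is incorrect. In fact the paper shows (by direct computation, ending with Schwartz's list) that every irreducible $PSL_2(\CC)$--representation of this quotient has image $\mathfrak{A}_4$; hence the image is \emph{finite} and condition \textbf{(C1)} already fails. There is no positive--dimensional family to invoke, so your non--rigidity argument cannot go through here, and you need the paper's route (or an equivalent finiteness argument).

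Second, your reduction of $B_4$ to $B_3$ is incomplete. From $[\rho(\sigma_1),\rho(\sigma_3)]=1$ and conjugacy you only get $\rho(\sigma_3)\in\{\rho(\sigma_1),\rho(\sigma_1)^{-1}\}$ in the diagonalisable case, and in the parabolic case $\rho(\sigma_3)$ can be an arbitrary nontrivial power in the same unipotent one--parameter subgroup. Your parenthetical dismissal of the alternative $\rho(\sigma_3)=\rho(\sigma_1)^{-1}$ (``reducible or finite once the full twist vanishes'') is not justified: the full twist $(\sigma_1\sigma_2\sigma_3)^4$ being central only forces $\rho(\sigma_2)^4=1$ after conjugation, which does not by itself yield finiteness, and $(\sigma_1\sigma_2)^3$ is \emph{not} central in $B_4$, so you cannot impose $(AB)^3=1$. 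The paper instead treats $B_4$ by a direct matrix computation (paralleling its $B_3$ argument) showing that for every admissible choice of commuting conjugate $A,C$ there is a one--parameter family of compatible $B$'s, hence non--rigidity; you should either carry out that computation or give a complete argument for the $\rho(\sigma_3)=\rho(\sigma_1)^{-1}$ and parabolic sub--cases.
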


\subsubsection{Braid groups} \label{subsubsec:braidGroups}

\paragraph{On three strands.} Consider the group $B_3= \langle \sigma_1, \sigma_{2} \, | \, \sigma_1 \sigma_2\sigma_1 = \sigma_2 \sigma_1 \sigma_2 \rangle \;$ ; we prove the two following lemmas. 

\begin{lemm} \label{lem:braidGroupDihedral}
There is no representation $\rho : B_3 \rightarrow PSL_2(\CC)$  with non--abelian dihedral image.
\end{lemm}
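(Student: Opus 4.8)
The plan is to combine two features of the braid group $B_3=\langle \sigma_1,\sigma_2\mid \sigma_1\sigma_2\sigma_1=\sigma_2\sigma_1\sigma_2\rangle$ with the rigid internal structure of a dihedral subgroup of $PSL_2(\CC)$. Recall that any non--abelian dihedral $D\leq PSL_2(\CC)$ fits in an exact sequence $1\to A\to D\to \ZZ/2\ZZ\to 1$, where the abelian "rotation" subgroup $A$ fixes a prescribed pair of points of $\PP^1$, is normal in $D$, and every element of $D\setminus A$ is an involution inverting each element of $A$. The whole argument will consist in locating $S:=\rho(\sigma_1)$ and $T:=\rho(\sigma_2)$ with respect to this splitting and then feeding them into the braid relation.

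The key structural input is that $\sigma_1$ and $\sigma_2$ are conjugate in $B_3$: setting $\delta:=\sigma_1\sigma_2$ and using the braid relation one checks directly that $\delta\sigma_1\delta^{-1}=\sigma_2$. Consequently $S$ and $T$ are conjugate inside $D$, and since $A$ is normal they lie on the same side of the splitting: either both in $A$ or both in $D\setminus A$. If both lay in $A$ the image $\langle S,T\rangle$ would be abelian, against hypothesis; hence $S,T\in D\setminus A$, so $S^2=T^2=1$ in $PSL_2(\CC)$. I would stress that this uses nothing beyond conjugacy of the generators and normality of $A$, so it is completely uniform across all dihedral images.

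Now I would exploit the braid relation with these two involutions. Left--multiplying $STS=TST$ by $T=T^{-1}$ and using $S^2=T^2=1$ yields $(TS)^2=(TS)^{-1}$, whence $(TS)^3=1$. Thus $A=\langle TS\rangle$ has order dividing $3$ and $D=\langle TS\rangle\rtimes\langle S\rangle$ is dihedral of order dividing $6$; being non--abelian it is forced to be isomorphic to the symmetric group $\mathfrak{S}_3$. In particular $A$ is \emph{finite}, so the image of $\rho$ can never be an infinite dihedral group. This is exactly the conclusion one needs to remove this branch of Degtyarev's list under condition \textbf{(C1)}, which requires an infinite image.

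The main obstacle is precisely the residual finite case $\mathfrak{S}_3$, and I want to be candid that it genuinely cannot be excluded: the canonical surjection $B_3\twoheadrightarrow \mathfrak{S}_3,\ \sigma_i\mapsto(i\ i{+}1)$, composed with the standard dihedral embedding $\mathfrak{S}_3\hookrightarrow PSL_2(\CC)$ (for instance the Möbius involutions $z\mapsto 1/z$ and $z\mapsto \omega/z$ with $\omega$ a primitive cube root of unity, whose product $z\mapsto\omega^{-1}z$ has order $3$) produces an honest representation with non--abelian dihedral image satisfying the braid relation. Therefore the sharp content provided by the computation above is that no $\rho:B_3\to PSL_2(\CC)$ has an \emph{infinite} dihedral image; the finite value $\mathfrak{S}_3$ does occur but is harmless in the sequel, since it violates the infiniteness demanded by \textbf{(C1)}. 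I would accordingly read (and, if needed, rephrase) the lemma as the exclusion of infinite dihedral images, the reduction to two conjugate involutions and then to $(TS)^3=1$ being the entire mechanism.
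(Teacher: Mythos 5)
Your proof is correct, and it accomplishes more than the paper's own argument: it exposes a genuine error in the paper's proof, and indeed in the literal statement of the lemma. The paper argues by normalisation and case analysis: setting $A=\rho(\sigma_1)$, $B=\rho(\sigma_2)$, it reduces to $A$ anti-diagonal and $B$ either diagonal or anti-diagonal. In the second case, with $B$ having anti-diagonal entries $-\lambda,\lambda^{-1}$, the paper asserts that $ABA=BAB$ in $PSL_2(\CC)$ forces $\lambda=\pm 1$. That computation is wrong: one finds that $ABA$ is anti-diagonal with entries $\lambda^{-1},-\lambda$ while $BAB$ has entries $\lambda^{2},-\lambda^{-2}$, so the projective relation reads $\lambda^{-1}=\pm\lambda^{2}$, i.e.\ $\lambda^{6}=1$. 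Taking $\lambda$ a primitive cube or sixth root of unity yields two anti-diagonal involutions whose product is a rotation of projective order three; they satisfy the braid relation and generate a non-abelian dihedral group isomorphic to $\mathfrak{S}_3$. This is precisely the counterexample you exhibit via $B_3\twoheadrightarrow\mathfrak{S}_3\hookrightarrow PSL_2(\CC)$, so the lemma as stated is false and your candour about the residual finite case is warranted. (Note that the paper's preliminary observation that $(AB)^3$ is central and hence trivial is consistent with this: in the $\mathfrak{S}_3$ image $(AB)^3=1$ indeed holds. The paper's first case, $\lambda=\pm i$ with $B$ diagonal, is also subsumed by your same-side dichotomy: there the pair projectively commutes and the image is a Klein four group, excluded by non-abelianness.)

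Your structural route --- conjugacy of $\sigma_1,\sigma_2$ by $\delta=\sigma_1\sigma_2$ together with normality of the rotation subgroup forces $S,T$ onto the same side of the splitting; the rotation side is excluded by abelianness; then the braid relation between two involutions gives $(TS)^3=1$ --- is clean, uniform, and establishes the correct statement: any dihedral image of a representation of $B_3$ is either abelian or isomorphic to $\mathfrak{S}_3$, in particular finite, so no infinite dihedral image exists. This corrected form is exactly what the paper needs downstream: for $B_3$ the lemma is only ever combined with condition \textbf{(C1)}, which demands an infinite image, and in the $B_4$ paragraph the paper in fact invokes the lemma in the form ``$\langle A,B\rangle$ and $\langle B,C\rangle$ must be finite'', i.e.\ in your corrected reading rather than as literally stated. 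The one place needing further repair is the treatment of $\Gamma_4$, where Lemma~\ref{lem:braidGroupDihedral} is cited to conclude that a non-commuting pair $(B,C)$ ``cannot have dihedral image'' before applying Lemma~\ref{lem:braidGroupNonDihedral}; under the corrected statement the finite possibility $\langle B,C\rangle\cong\mathfrak{S}_3$ survives and must be ruled out or handled by a separate argument there.
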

\begin{proof}
Let $A$ (resp. $B$) be the image of $\sigma_1$ (resp. $\sigma_2$) by $\rho$. Then one has $ABA=BAB$ and thus $A = (AB)^{-1}B(AB)$ is conjugate to $B$. Moreover, one can easily check that $(AB)^3$ commutes with the whole (non abelian) image of $\rho$ and so must be trivial (Lemma~\ref{lem:centralPSL}).
Since $\rho$ is dihedral then since its image must be non abelian one must have (up to global conjugacy and permuting $A$ and $B$) either
\begin{displaymath}
A = \begin{pmatrix}
0 & -1 \\ 
1 & 0
\end{pmatrix} \quad \text{ and } \quad B = \begin{pmatrix}
\lambda & 0 \\ 
0 & \lambda^{-1}
\end{pmatrix}
\end{displaymath}
or
\begin{displaymath}
A = \begin{pmatrix}
0 & -1 \\ 
1 & 0
\end{pmatrix} \quad \text{ and } \quad B = \begin{pmatrix}
0 & -\lambda \\ 
 \lambda^{-1} & 0
\end{pmatrix}
\end{displaymath}
for some $\lambda \in \CC^\setminus \lbrace 0,-1,1 \rbrace$ (if $\lambda = \pm 1$ then the image of $\rho$ would be abelian). In the first case, since $A$ and $B$ are conjugate, $\lambda$ must be equal to $\pm i$ and so the groupe generated by $A$ and $B$ is finite abelian. In the second case, the condition one can only have $ABA=BAB$ if $\lambda = \pm 1$ \emph{i.e} if $A=B$, meaning that the image of $\rho$ must be abelian.
\end{proof}

\begin{lemm} \label{lem:braidGroupNonDihedral}
Let $\rho : B_3 \rightarrow PSL_2(\CC)$ be a representation with irreducible non--dihedral image. Then $\rho$ cannot be rigid.
\end{lemm}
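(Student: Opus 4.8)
The plan is to exploit the center of $B_3$ to reduce $\rho$ to a representation of the modular group $PSL_2(\ZZ) \cong \ZZ/2\ZZ \ast \ZZ/3\ZZ$, and then to exhibit an explicit one--parameter family of pairwise non--conjugate deformations through $\rho$, which is exactly what non--rigidity means.

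First I would use the classical presentation $B_3 \cong \langle x,y \mid x^2 = y^3 \rangle$, where $x = \sigma_1\sigma_2\sigma_1$ and $y = \sigma_1\sigma_2$, so that the full twist $z := x^2 = y^3 = (\sigma_1\sigma_2)^3$ generates the infinite cyclic center of $B_3$. Writing $A = \rho(\sigma_1)$ and $B = \rho(\sigma_2)$, the element $\rho(z) = (AB)^3$ centralises the whole image of $\rho$, which is non--abelian since it is irreducible and non--dihedral; Lemma~\ref{lem:centralPSL} then forces $\rho(z) = \mathrm{id}$. Hence $\rho$ factors through $B_3/\langle z\rangle \cong \langle u,v \mid u^2 = v^3 = 1\rangle \cong PSL_2(\ZZ)$, so that $\rho$ is determined by the pair $U := \rho(x)$, $V := \rho(y)$ subject only to $U^2 = V^3 = \mathrm{id}$. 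Irreducibility rules out $U = \mathrm{id}$ and $V = \mathrm{id}$ (either would make the image cyclic, hence reducible), so $U$ is an involution and $V$ has order exactly three.

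The second step is to construct the deformation. Up to conjugacy I may take $V = \mathrm{diag}(\zeta,\zeta^{-1})$ for a primitive cube root of unity $\zeta$, whose centraliser is the diagonal torus $T$; and since an involution is conjugate to a trace--zero element I may write $U = \left(\begin{smallmatrix} a & b \\ c & -a \end{smallmatrix}\right)$ with $-a^2 - bc = 1$. A direct computation gives $\Tr(UV) = a(\zeta - \zeta^{-1})$, and conjugation by $T$ preserves $a$ while scaling $(b,c)$. Thus, once $V$ is normalised, the conjugacy class of the pair $(U,V)$ is recorded by the single invariant $\Tr(UV)$, which ranges over all of $\CC$. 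The value $\Tr(UV) = 0$ corresponds precisely to the dihedral case ($U$ anti--diagonal), and two further values give reducible pairs; away from these finitely many values the pair generates an irreducible non--dihedral subgroup and, by pullback through $B_3 \to PSL_2(\ZZ)$, defines a genuine representation of $B_3$. Because trace functions are complete conjugacy invariants on the irreducible locus, varying $\Tr(UV)$ in a neighbourhood of its value at $\rho$ yields a one--parameter family of pairwise non--conjugate irreducible representations passing through $\rho$.

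This exhibits $\rho$ as a non--isolated point of the character variety, hence as a non--rigid representation, which is the assertion. The only delicate points are checking that distinct values of $\Tr(UV)$ really produce non--conjugate classes and that the family stays inside the irreducible, non--dihedral locus near $\rho$; both follow from the explicit normal form above. I expect the main obstacle to be bookkeeping rather than conceptual: pinning down the dihedral value $\Tr(UV)=0$ and the reducible values, and verifying that the hypotheses on $\rho$ place it strictly away from them so that a whole interval of parameters remains admissible.
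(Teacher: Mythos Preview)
Your argument is correct and takes a genuinely different route from the paper's own proof. The paper works directly with the standard generators: setting $A=\rho(\sigma_1)$, $B=\rho(\sigma_2)$, it splits into cases according to whether $A$ is unipotent or diagonalisable, and in each case writes down an explicit one--parameter family of pairs $(A,B)$ satisfying the braid relation, thereby exhibiting a deformation of $\rho$. Your approach instead passes to the presentation $B_3\cong\langle x,y\mid x^2=y^3\rangle$, kills the centre via Lemma~\ref{lem:centralPSL}, and reduces to representations of $PSL_2(\ZZ)\cong\ZZ/2\ast\ZZ/3$; the character variety of the latter into $PSL_2(\CC)$ is visibly one--dimensional, parametrised by $\Tr(UV)$, so non--rigidity is immediate once you locate the finitely many degenerate values.

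What each buys: the paper's case analysis is elementary and self--contained but must treat the unipotent case separately and produce an ad hoc deformation there. Your argument is more uniform and conceptual---no case split, and it makes transparent \emph{why} there is exactly one modulus---at the cost of invoking the alternative presentation of $B_3$ and being slightly careful that ``trace'' makes sense in $PSL_2(\CC)$ (you are implicitly lifting $U,V$ to $SL_2(\CC)$ with $U^2=-I_2$, $V^3=I_2$, which is fine). One small point worth tightening: after normalising $V$, the residual symmetry is the normaliser of the torus, not just the torus itself; you should check (as is indeed the case) that the extra Weyl--group element does not identify distinct values of your parameter. Otherwise the sketch is sound.
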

\begin{proof}
As above, let $A$ (resp. $B$) be the image of $\sigma_1$ (resp. $\sigma_2$) by $\rho$. Since $\rho$ is not dihedral, then it is Zariski--dense and so must be rigid by Theorem~\ref{th:LPT-CS}. Up to global conjugacy, one can assume that either
 \begin{displaymath}
 A=\begin{pmatrix}
1 & 1 \\ 
0 & 1
\end{pmatrix} \text{ or } A=\begin{pmatrix}
u & 0 \\ 
0 & u^{-1}
\end{pmatrix} \text{ for some } u \in \CC\setminus \lbrace 0,-1,1 \rbrace \; .
\end{displaymath}
In the first case, the facts that $A$ and $B$ are conjugate and $\mathrm{Im}(\rho)$ must be non--abelian forces 
\begin{displaymath}
B = \begin{pmatrix}
1 & 0 \\ 
u & 1
\end{pmatrix}
\end{displaymath} 
for some $u \in \CC^*$, but then for any such $u$ one has $ABA = BAB$ in $PSL_2(\CC)$ if and only if $u = -1$. However, one easily checks that for any $v \in \CC^*$ the following defines a representation of $B_3$:
\begin{displaymath}
\sigma_1 \mapsto \begin{pmatrix}
v & 1 \\ 
0 & v^{-1}
\end{pmatrix}, \quad \sigma_2 \mapsto \begin{pmatrix}
v^{-1} & 0 \\ 
-1 & v
\end{pmatrix}
\end{displaymath}
and so $\rho$ is not rigid. In the second case, one can check that the equation $B = (AB)^{-1}B(AB)$ (in the variable $B$) admits a solution for any $u\neq 0, \pm 1$ and that such a pair $(A,B)$ defines a representation of $B_3$ for any such $u$. Therefore, the representation cannot be rigid.
\end{proof}

The combination of Lemmas~\ref{lem:braidGroupDihedral} and~\ref{lem:braidGroupNonDihedral} with Theorem~\ref{th:LPT-CS} yield that no representation $\rho : B_3 \rightarrow PSL_2(\CC)$ may satisfy conditions \textbf{(C1)} and \textbf{(C2)}.

\paragraph{On four strands.}

Consider the group $$B_4= \langle \sigma_1, \sigma_{2}, \sigma_3 \, | \, [\sigma_1,\sigma_3] ,\, \sigma_1 \sigma_2\sigma_1 = \sigma_2 \sigma_1 \sigma_2, \,\sigma_3 \sigma_2\sigma_3 = \sigma_2 \sigma_3 \sigma_2 \rangle \;$$ and let $A$ (resp. $B$, $C$) be the image of $\sigma_1$ (resp. $\sigma_2$, $\sigma_3$) by $\rho$; the braid relations then give us:
\begin{displaymath}
[A,C] = I_2, \quad ABA=BAB \quad \text{ and } \quad BCB = CBC \; \;.
\end{displaymath}

If either $A$ or $C$ is trivial then $\rho$ factors through a representation of $B_3$ and so cannot satisfy conditions \textbf{(C1)} and \textbf{(C2)}. Moreover, $B$ cannot be trivial since the image of $\rho$ must be non--abelian, nor can $A$ commute to $B$.

All three matrices must be conjugate as $B = (AB)^{-1}A(AB)$ and $C = (BC)^{-1} B (BC)$. As such, if $\rho$ is dihedral then by applying Lemma~\ref{lem:braidGroupDihedral} we get that both the groups $\langle A,B \rangle$ and $\langle B,C \rangle$ must be finite; combined with the fact that $A$ and $C$ commute, this forces $\mathrm{Im}(\rho)$ to be finite.

Else, we proceed again as above: up to global conjugacy, one can assume that either
 \begin{displaymath}
 A=\begin{pmatrix}
1 & 1 \\ 
0 & 1
\end{pmatrix} \text{ or } A=\begin{pmatrix}
u & 0 \\ 
0 & u^{-1}
\end{pmatrix} \text{ for some } u \in \CC\setminus \lbrace 0,-1,1 \rbrace \; .
\end{displaymath}
In the first case, the facts that $A$, $B$ and $C$ are conjugate and $\mathrm{Im}(\rho)$ must be non--abelian forces (up to global conjugacy)
\begin{displaymath}
B = \begin{pmatrix}
1 & 0 \\ 
u & 1
\end{pmatrix} \text{ and } C=  \begin{pmatrix}
1 & v \\ 
0 & 1
\end{pmatrix}
\end{displaymath} 
for some $u,v \in \CC^*$, but then the relation $ABA = BAB$ (resp. $BCB=CBC$) in $PSL_2(\CC)$ if and only if $u = -1$ (resp. $v=1$). However, one easily checks that for any $w \in \CC^*$ the following defines a representation of $B_4$:
\begin{displaymath}
\sigma_1 \mapsto \begin{pmatrix}
w & 1 \\ 
0 & w^{-1}
\end{pmatrix}, \quad \sigma_2 \mapsto \begin{pmatrix}
w^{-1} & 0 \\ 
-1 & w
\end{pmatrix}, \quad \sigma_3 \mapsto \begin{pmatrix}
w & 1 \\ 
0 & w^{-1}
\end{pmatrix},
\end{displaymath}
and so $\rho$ is not rigid. In the second case, since $A$ and $C$ commute one must have
\begin{displaymath}
C = \begin{pmatrix}
v & 0 \\ 
0 & v^{-1}
\end{pmatrix} \text{ for some } v \in \CC\setminus \lbrace 0,-1,1 \rbrace \; .
\end{displaymath}
One can then check that both equations $B = (AB)^{-1}B(AB)$ and $B = (CB) C (CB)^{-1}$ (in the variable $B$) admits a solution for any $u,v\neq 0, \pm 1$ and that such a triple $(A,B,C)$ defines a representation of $B_4$ for any such $u,v$. Therefore, the representation cannot be rigid.

\subsubsection{Groups of type $G(T)$ and $G_p(T)$}

Recall that since these groups are solvable, any of their irreducible representation into $PSL_2(\CC)$ must have dihedral image (see Proposition~\ref{prop:reprSolv}).

\paragraph{The group $G(t^3-1)$.}
Since $t^3-1=(t-1)(t^2+t+1)$, the group $G(t^3+1)$ is isomorphic to the semi--direct product $\ZZ^3 \,_\varphi\!\!\rtimes \ZZ$, where:
\begin{displaymath}
\varphi(n) \cdot \begin{pmatrix}
u \\ 
v \\ 
w
\end{pmatrix} = \begin{pmatrix}
0 & -1 & 0 \\ 
1 & -1 & 0 \\ 
0 & 0 & 1
\end{pmatrix}^n  \begin{pmatrix}
u \\ 
v \\ 
w
\end{pmatrix}\,.
\end{displaymath}
Fix the following set of generators for $G(t^3-1)$:
\begin{displaymath}
a:=\left( \begin{pmatrix}
1 \\ 
0 \\ 
0
\end{pmatrix} \, , \, 0 \right), \; b:=\left( \begin{pmatrix}
0 \\ 
1 \\ 
0
\end{pmatrix} \, , \, 0 \right), \;c:=\left( \begin{pmatrix}
0 \\ 
0 \\ 
1
\end{pmatrix} \, , \, 0 \right), \; d:=\left( \begin{pmatrix}
0 \\ 
0 \\ 
0
\end{pmatrix} \, , \, 1 \right) \; 
\end{displaymath}
and let $A,B,C,D$ be their images under $\rho$. Then $C$ centralises the whole image of $\rho$, which we assumed nonabelian therefore it must be equal to the identity in $PSL_2(\CC)$. Moreover, since $A$ and $B$ commute and $\mathrm{Im}(\rho)$ is dihedral and non--abelian one must have, up to global conjugacy:
\begin{displaymath}
A = \begin{pmatrix}
u & 0 \\ 
0 & u^{-1}
\end{pmatrix} , \quad B = \begin{pmatrix}
v & 0 \\ 
0 & v^{-1}
\end{pmatrix} \quad \text{and} \quad D = \begin{pmatrix}
0 & -1 \\ 
1 & 0
\end{pmatrix}
\end{displaymath}
or 
\begin{displaymath}
A =  B = \begin{pmatrix}
0 & -1 \\ 
1 & 0
\end{pmatrix} \quad \text{and} \quad D = \begin{pmatrix}
u & 0 \\ 
0 & u^{-1}
\end{pmatrix}
\end{displaymath}
for some $u,v \in \CC^*$. Note that $A$ must be equal to $B$ in the second case because those two matrices need to commute.

Using the above generators, one gets that in $G(t^3-1)$ one has the relation $d\cdot a = b \cdot d$ and so one must have $DA=BD$, which is only possible in the first case and if $uv = \pm 1$. But one also has $d^2 \cdot a = b \cdot d^2$ and so $A=B= I_2$ in $PSL_2(\CC)$, which is impossible as the image of $\rho$ must be non--abelian.

\paragraph{Groups of type $G_p(t+1)$.}

Let $p$ be some prime integer; then the group $G_p(t+1)$ is isomorphic to the semi--direct product $\FF_p \,_\varphi\!\!\rtimes \ZZ$ where $\phi(n)\cdot \bar{k} = (-1)^n \bar{k}$. As such, it has two generators $a:=(\bar{1},0)$ and $b:=(\bar{0},1)$ such that (using multiplicative notation) $a^p = 1$ and $ab^2 = b^2 a$. 

If we consider a representation $\rho : G_p(t+1) \rightarrow PSL_2(\CC)$ and set $(A,B) : = (\rho(a), \rho(b))$ then since $B^2$ commutes with every element of $\mathrm{Im}(\rho)$ we must have $A^p = B^2 = I_2$, therefore the aforementioned image must be isomorphic to the semi--direct product $\FF_p \rtimes \FF_2$ and so is finite.

\subsubsection{Group $\Ar^2(2,3,5)$}

If one sets $A := \rho(a), \, B := \rho(b)$ and $C:=\rho(c)$ then one gets $A^2 = B^3 = C^5 = ABC = I_2$ in $PSL_2(\CC)$. Lifting this to $SL_2(\CC)$, one can thereofore assume that $A^2 = \pm I_2, \, B^3 = (AB)^5 = -I_2$ (up to changing $B$ to $-B$) and so $\Tr(A) \in \lbrace -2,0,2 \rbrace, \, \Tr(B) = 2 \cos(\pi/3) = 1$ and $\Tr(C) = 2\cos(\pi/5)$ or $2\cos(3\pi/5)=-2\cos(2\pi/5)$.

The study of hypergeometric equations by Schwartz's yielded a list of all triples of matrices $(M_1,  M_2,M_3)$ in $SL_2(\CC)$ satisfying $M_1 M_2 M_3 = I_2$ such that the group $\langle M_1, M_2, M_3 \rangle \leq SL_2(\CC)$ is finite. An account of these works can be found in Chapter IX of~\cite{Uniformisation}, with the list itself being reproduced on p. 310. If $\Tr(A) = 2 \cos(\pi/2) = 0$ then the aforementioned Schwartz's list gives us that the image of $\rho$ is finite, isomorphic to $\mathfrak{A}_5$. Else, $A$ must be equal to $\pm I_2$ and so $BC = \pm I_2$: $\mathrm{Im}(\rho)$ must be abelian.

\subsubsection*{"Unusual" groups}

Several of the exceptional groups appearing in Degtyarev's list can be eliminated for algebraic reasons, as we show in this paragraph.

\paragraph{The group $\Gamma_4$}

Consider the following group:
\begin{displaymath}
\Gamma_4 := \langle a,b,c \, | \, aba=bab,\, cbc=bcb, \, a(bcb^{-1})a = (bcb^{-1})a(bcb^{-1}) \rangle \; .
\end{displaymath}
Let $\rho$ be a representation of the above group into $PSL_2(\CC)$ and set $A$ (resp. $B$, $C$) to be the images $\rho(a)$ (resp. $\rho(b)$, $\rho(c)$). First remark that if $[B,C] = I_2$ in $PSL_2(\CC)$ then the second relation in the presentation above becomes:
\begin{displaymath}
C^2B = C B^2, \quad \text{\emph{i.e}} \quad C=B 
\end{displaymath}
therefore $\rho$ factors through a representation of the group:
\begin{displaymath}
\langle a,b \, | \, aba=bab \rangle \;
\end{displaymath}
which is isomorphic to the braid group $B_3$ and so cannot satisfy both conditions \textbf{(C1)} and \textbf{(C2)}.

Lets assume then that $B$ does not commute to $C$. This implies that the restriction $\tilde{\rho}$ of $\rho$ to $\langle B,C \rangle$ factors through the braid group $B_3$, therefore it cannot have dihedral image by Lemma~\ref{lem:braidGroupDihedral}. So we get from Lemma~\ref{lem:braidGroupNonDihedral} that there is an analytic family of matrices $u\mapsto B(u),C(u)$ containing $B$ and $C$ such that $C(u)B(u)C(u)=B(u)C(u)B(u)$ for any $u$ in some Zariski--open set in $\CC$. Therefore, $[A,B]$ must not be the identity or else one would have $A=B$ and so $\rho$ would not be rigid. But then, one checks that, similarly to what we did in Lemma~\ref{lem:braidGroupNonDihedral}, the braid relations
\begin{displaymath}
AB(u)A=bab \quad \text{ and } \quad A(B(u)C(u)B(u)^{-1})A= (B(u)C(u)B(u)^{-1})A(B(u)C(u)B(u)^{-1})
\end{displaymath}
are compatible and so give us a way to analytically deform $A$ as well, and so $\rho$ is not rigid.

\paragraph{The group $\Gamma_3$}

We are looking at representations of the group
\begin{displaymath}
\Gamma_3 := \langle a,b \, | \, [a^3,b] = 1 \, , ab^2=ba^2 \rangle
\end{displaymath}
into $PSL_2(\CC)$. Let $\rho$ be such a representation and set $A$ (resp. $B$) to be the images $\rho(a)$ (resp. $\rho(b)$), then Lemma ~\ref{lem:centralPSL} forces $A^3$ to be the identity and so up to conjugacy it must lift to the following matrix in $SL_2(\CC)$:
\begin{displaymath}
\begin{pmatrix}
r  & 0 \\ 
0 & r^{-1}
\end{pmatrix}
\end{displaymath}
where $r$ is some root of the polynomial $Z^2+Z+1$. Then the relation $AB^2 = B^2 A$ and condition \textbf{(C1}) force $B$ to lift to
\begin{displaymath}
\begin{pmatrix}
\frac{r+2}{3}  & 1 \\ 
-\frac{2}{3} & \frac{1-r}{3}
\end{pmatrix} \; .
\end{displaymath}
However, the triple $(A,B,(AB)^{-1})$ appears in Schwartz's list~\cite{Uniformisation}; as a consequence, such a representation has finite image (in this case isomorphic to $\mathfrak{A}_4$).

\paragraph{The group $\Gamma_3^\prime$}

This group is given by the following presentation:
\begin{displaymath}
 \Gamma_3^\prime := \langle a,b,c \, | \, aca=cac,\, [b,c]=1, \, (ab)^2=(ba)^2 \rangle \; .
 \end{displaymath} 
 Let $\rho$ be a representation of the above group into $PSL_2(\CC)$ and set $A$ (resp. $B$, $C$) to be the images $\rho(a)$ (resp. $\rho(b)$, $\rho(c)$). Condition \textbf{(C1}) and the fact that $B$ and $C$ commute force the pair $(A,C)$ to be non--commutative. Since $A$ and $C$ are conjugate, it is possible to assume, up to conjugacy that:
\begin{displaymath}
A = \begin{pmatrix}
u & 1 \\ 
0 & u^{-1}
\end{pmatrix}, \quad B = \begin{pmatrix}
v & 0 \\ 
w & v^{-1}
\end{pmatrix}
\text{ and } C = \begin{pmatrix}
u & 0 \\ 
t & u^{-1}
\end{pmatrix} 
\end{displaymath} 
for some $u,v,w,t \in \CC^*$. Solving the equations $ACA=CAC$ and $(AB)^2 = (BA)^2$ in $PSL_2(\CC)$ in the aforementioned variables, we get the following one--parameter families ($u \in \CC^*$) of representations satisfying condition \textbf{(C1}):
\begin{enumerate}
\item
\begin{displaymath}
A= \begin{pmatrix}
u^{-1} & 1 \\ 
0 & u
\end{pmatrix} \quad B= \begin{pmatrix}
u^2 & 0 \\ 
-(u+u^{-1}) & u^{-2}
\end{pmatrix}\quad C= \begin{pmatrix}
u & 0 \\ 
-1 & u^{-1} 
\end{pmatrix}\; ;
\end{displaymath}
\item
\begin{displaymath}
A= \begin{pmatrix}
u & 1 \\ 
0 & u^{-1}
\end{pmatrix} \quad B= \begin{pmatrix}
u^2 & 0 \\ 
-\dfrac{(u^2+1)(u^4-u^2+1)}{u^3} & u^{-2}
\end{pmatrix}\quad C= \begin{pmatrix}
u & 0 \\ 
-\dfrac{u^4-u^2+1}{u^2} & u^{-1} 
\end{pmatrix}\; .
\end{displaymath}
\end{enumerate}
However, it is quite straightforward to check that these are not conjugate to representations with dihedral image: indeed if that were the case then in any pair of non--commutating matrices there should be one with eigenvalues $\pm i$. This can only happen here if $u$ is equal to $\pm i$ ; this cannot happen in case 1. as the associated representation has reducible image, nor in case $2$ since $B$ would be equal to the identity. Consequently, it follows from Theorem~\ref{th:LPT-CS} that all of the above factor through a curve.

\subsubsection{Product groups of $\ZZ$ and one of the above}

Let $\rho : \ZZ \times G \rightarrow PSL_2(\CC)$ be a group representation satisfying condition \textbf{(C1}) and set $a$ to be a generator of the "$\ZZ$--part of the above product. Then $\rho(a	)$ centralises the entire image of $\rho$, is a non--abelian subgroup of $PSL_2(\CC)$: therefore $\rho(a)$ is the identity element in $PSL_2(\CC)$ and so $\rho$ factorises through a representation of $G$. It follows that there is no representation of $\ZZ \times G$ satisfying conditions \textbf{(C1}) and \textbf{(C2}), as this would imply there exists one of $G$.

\subsubsection{Filtered list}

\begin{table}[!h]
\begin{center}
\begin{tabular}{ccc}
\textbf{Curve type} & \textbf{Intersection type(s)} &\textbf{Group(s)} \\ 
\hline 
$C_5(A_6 \sqcup 3A_2)$ & --- & $\Gamma_5$ \\ 
$C_3(A_1)\sqcup 2C_1$ &  $\underline{\times 3} \; ; \underline{\times 1}, \times 2$ & $G(t^2-1)$ \\ 
  & $\underline{\times 1}, \times 2 \; ; \underline{\times 1}, \times 2$ & $G(t^2-1)$ \\ 
$2C_2 \sqcup C_1$ & the two $C_2$ intersect with multiplicity $4$	 & $T_{2,4}$ \\ 
  & the two $C_2$ intersect at two points &  $T_{2,4}$ \\ 
$C_2 \sqcup 3C_1$ & the three $C_1$ have a common point & $\Gamma_2, \, \ZZ \times \Fb_2$ \\ 
  & else & $\Gamma_2^\prime, \,  \ZZ \times T_{2,4}$ \\ 
\end{tabular} 
\end{center}
\caption{Degtyarev's list, after elimination of groups.}\label{tab:Deg2}
\end{table}

Proposition~\ref{prop:DegGroups} allows us to substantially reduce Degtyarev's list, as evidenced in Table~\ref{tab:Deg2}. It now only remains to look at this new list on a curve by curve basis to conclude the proof of Theorem~\ref{thA:quintics}.

\subsection{Remaining quintic curves and their fundamental group}

\subsubsection{Irreducible quintics}

The only such curve with infinite non--abelian fundamental group is that of type $C_5(A_6 \sqcup 3A_2)$; the aforementioned group being isomorphic to 
\begin{displaymath}
\Gamma_5 := \langle u,v \, | u^3 = v^7 =(uv^2)^2 \rangle \; .
\end{displaymath}

This case has been previously studied by Cousin (see~\cite{theseGael} Section~5.2, pages~110--112): any such representation factors through $10:1$ ramified cover over $\PP^1$, and so cannot satisfy condition \textbf{(C2}).

\subsubsection{Curves of type $C_3(A_1) \sqcup 2 C1$}\label{sec:cubic}

We already eliminated the case where the intersection type is $\times 3, \times 3$ so we are left with $\underline{\times 3}; \underline{\times 1} \times 2$ and $\underline{\times 1} \times 2;\underline{\times 1} \times 2$. The fundamental group of the complement of both these curves is isomorphic to the solvable group $G(t^2 -1)$ so $\rho$ must be dihedral (see Proposition~\ref{prop:reprSolv}). This group is isomorphic to $\ZZ^2 \,_\varphi\!\!\rtimes \ZZ$, where:
\begin{displaymath}
\varphi(n) \cdot \begin{pmatrix}
u \\ 
v 
\end{pmatrix} = \begin{pmatrix}
0 & 1  \\ 
1 & 0 
\end{pmatrix}^n  \begin{pmatrix}
u \\ 
v 
\end{pmatrix}\, ,
\end{displaymath}
therefore it has three generators 
\begin{displaymath}
a := \left( \begin{pmatrix}
1 \\ 
0 
\end{pmatrix}\, , \, 0 \right) \; , \; b := \left( \begin{pmatrix}
0 \\ 
1 
\end{pmatrix}\, , \, 0 \right) \; \text{ and } \; c := \left( \begin{pmatrix}
0 \\ 
0 
\end{pmatrix}\, , \, 1 \right)
\end{displaymath}
such that  $b \cdot c^2=c^2 \cdot b$, $ca = bc$ and $[a,b]=1$. Thus if one sets $A,B,C$ to be the images of these generators in $PSL_2(\CC)$, then one has necessarily $C^2 = I_2$. Since the image of $\rho$ must be dihedral non--abelian, this implies that up to global conjugacy one gets diagonal $A$ and $B$ and:
\begin{displaymath}
C = \begin{pmatrix}
0 & 1 \\ 
-1 & 0
\end{pmatrix} \;.
\end{displaymath}
Therefore, the relation $CA=BC$ implies that there exists $u \in \CC^*$ such that:
\begin{displaymath}
A = \begin{pmatrix}
u & 0 \\ 
0 & u^{-1}
\end{pmatrix} , \quad \text{and} \quad B = \begin{pmatrix}
u^{-1} & 0 \\ 
0 & u
\end{pmatrix} \;.
\end{displaymath}
This is the third item in Theorem~\ref{thA:quintics}.

\subsubsection{Curves of type $2 C_2 \sqcup  C_1$}

We already treated the "$\Fb_2$--cases" in Degtyarev's list using Remark~\ref{rem:noCoverNeeded}, so we only need to concern ourselves with the remaining cases.
 $ $\\
$ $\\
$ $\\

\begin{figure}[!h]
\begin{center}
\begingroup%
  \makeatletter%
  \providecommand\color[2][]{%
    \errmessage{(Inkscape) Color is used for the text in Inkscape, but the package 'color.sty' is not loaded}%
    \renewcommand\color[2][]{}%
  }%
  \providecommand\transparent[1]{%
    \errmessage{(Inkscape) Transparency is used (non-zero) for the text in Inkscape, but the package 'transparent.sty' is not loaded}%
    \renewcommand\transparent[1]{}%
  }%
  \providecommand\rotatebox[2]{#2}%
  \ifx\svgwidth\undefined%
    \setlength{\unitlength}{338.21736852bp}%
    \ifx\svgscale\undefined%
      \relax%
    \else%
      \setlength{\unitlength}{\unitlength * \real{\svgscale}}%
    \fi%
  \else%
    \setlength{\unitlength}{\svgwidth}%
  \fi%
  \global\let\svgwidth\undefined%
  \global\let\svgscale\undefined%
  \makeatother%
  \begin{picture}(1,0.68377002)%
    \put(0,0){\includegraphics[width=\unitlength]{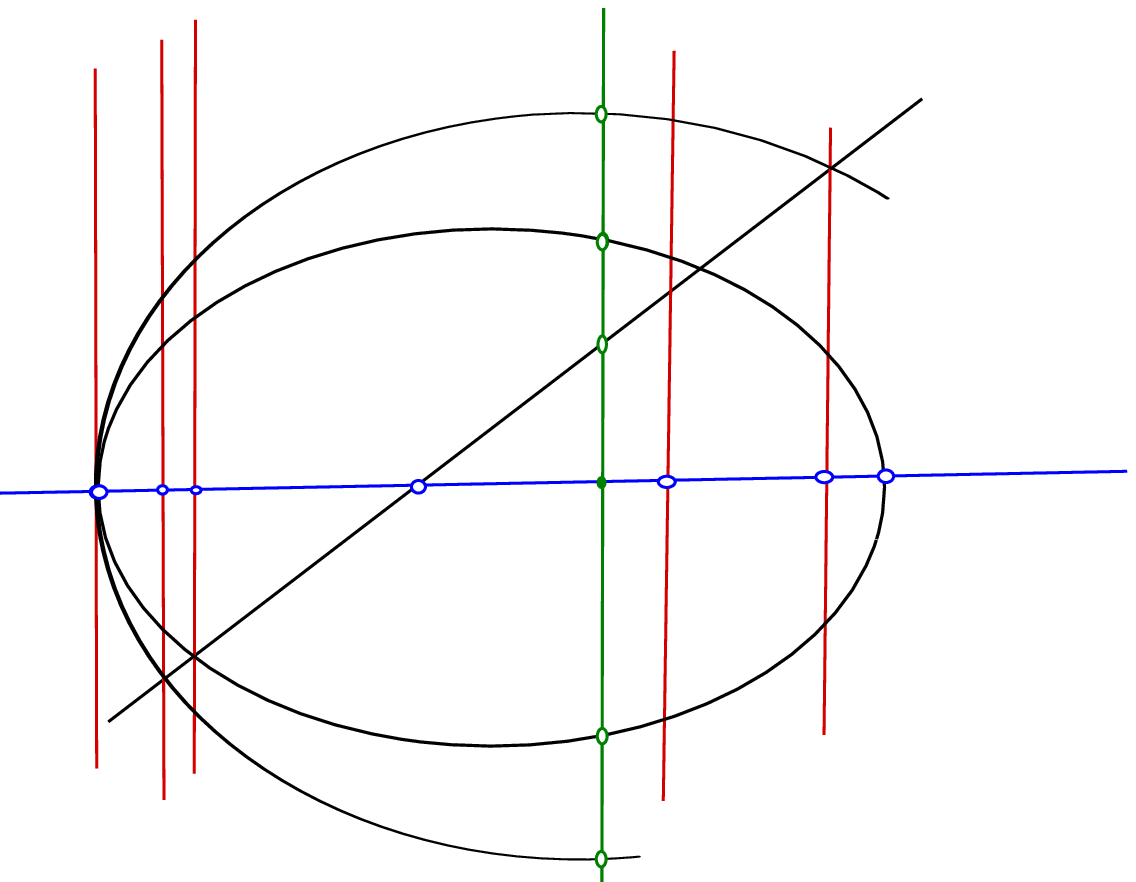}}%
    \put(0.77,0.38){\color[rgb]{0,0,0}\makebox(0,0)[lb]{\smash{ $\gamma_1$}}}%
    \put(0.68,0.38){\color[rgb]{0,0,0}\makebox(0,0)[lb]{\smash{$\gamma_2$}}}%
    \put(0.59,0.38){\color[rgb]{0,0,0}\makebox(0,0)[lb]{\smash{$\gamma_3$}}}%
    \put(0.35748418,0.38){\color[rgb]{0,0,0}\makebox(0,0)[lb]{\smash{$\gamma_4$}}}%
    \put(0.175,0.38){\color[rgb]{0,0,0}\makebox(0,0)[lb]{\smash{$\gamma_6$}}}%
    \put(0.145,0.38){\color[rgb]{0,0,0}\makebox(0,0)[lb]{\smash{$\gamma_5$}}}%
    \put(0.1,0.375){\color[rgb]{0,0,0}\makebox(0,0)[lb]{\smash{$\gamma_7$}}}%
    \put(0.54,0.59){\color[rgb]{0,0,0}\makebox(0,0)[lb]{\smash{$g_4$}}}%
    \put(0.50,0.5){\color[rgb]{0,0,0}\makebox(0,0)[lb]{\smash{$g_3$}}}%
    \put(0.536,0.7){\color[rgb]{0,0,0}\makebox(0,0)[lb]{\smash{$g_5$}}}%
    \put(0.536,0.15){\color[rgb]{0,0,0}\makebox(0,0)[lb]{\smash{$g_2$}}}%
    \put(0.536,0.05){\color[rgb]{0,0,0}\makebox(0,0)[lb]{\smash{$g_1$}}}%
    \put(0.49,0.38){\color[rgb]{0,0,0}\makebox(0,0)[lb]{\smash{$x_0$}}}%
    \put(0.94885004,0.38){\color[rgb]{0,0,0}\makebox(0,0)[lb]{\smash{ $L_0$}}}%
  \end{picture}%
\endgroup
\end{center}
\caption{First case. } \label{fig:ZVK_2C2_b}
\end{figure}

\begin{figure}[!h]
\begin{center}
\begingroup%
  \makeatletter%
  \providecommand\color[2][]{%
    \errmessage{(Inkscape) Color is used for the text in Inkscape, but the package 'color.sty' is not loaded}%
    \renewcommand\color[2][]{}%
  }%
  \providecommand\transparent[1]{%
    \errmessage{(Inkscape) Transparency is used (non-zero) for the text in Inkscape, but the package 'transparent.sty' is not loaded}%
    \renewcommand\transparent[1]{}%
  }%
  \providecommand\rotatebox[2]{#2}%
  \ifx\svgwidth\undefined%
    \setlength{\unitlength}{311.58935912bp}%
    \ifx\svgscale\undefined%
      \relax%
    \else%
      \setlength{\unitlength}{\unitlength * \real{\svgscale}}%
    \fi%
  \else%
    \setlength{\unitlength}{\svgwidth}%
  \fi%
  \global\let\svgwidth\undefined%
  \global\let\svgscale\undefined%
  \makeatother%
  \begin{picture}(1,0.81581655)%
    \put(0,0){\includegraphics[width=\unitlength]{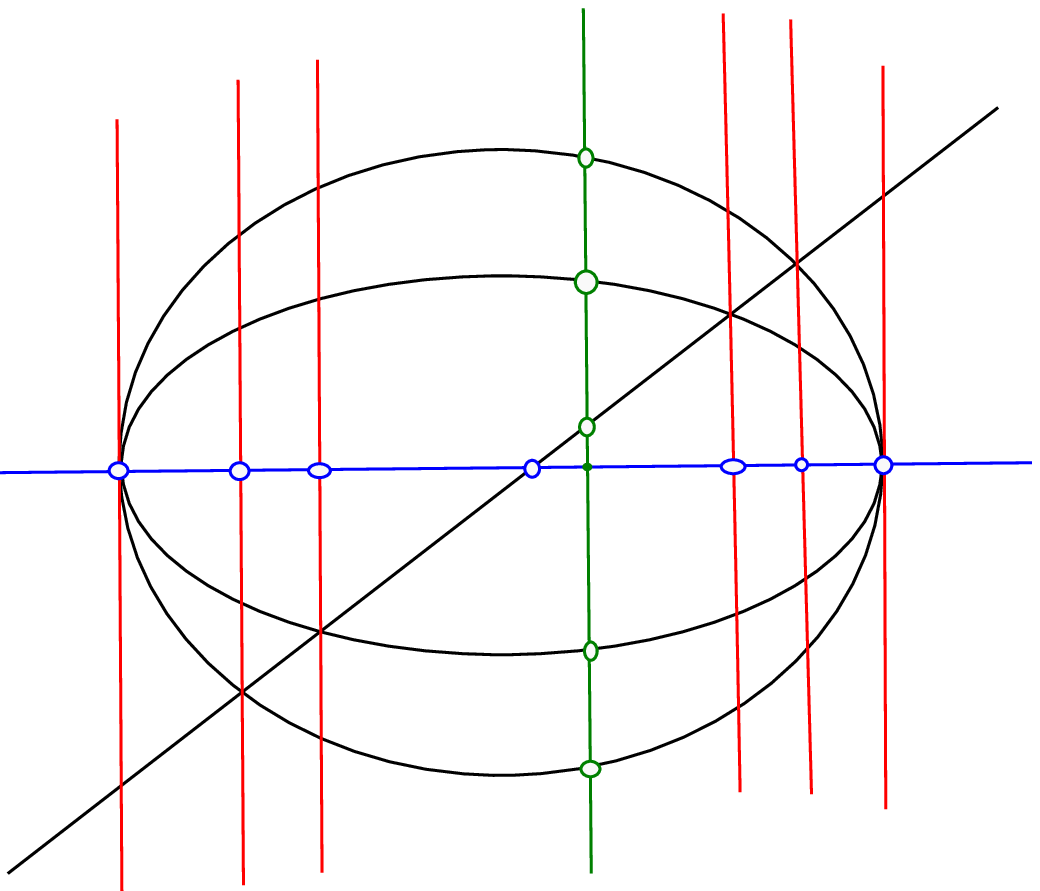}}%
    \put(0.551104,0.38300068){\color[rgb]{0,0,0}\makebox(0,0)[lb]{\smash{}}}%
    \put(0.55385483,0.38483462){\color[rgb]{0,0,0}\makebox(0,0)[lb]{\smash{}}}%
    \put(0.58,0.3885025){\color[rgb]{0,0,0}\makebox(0,0)[lb]{\smash{$x_0$}}}%
    \put(0.94447883,0.36){\color[rgb]{0,0,0}\makebox(0,0)[lb]{\smash{$L_0$}}}%
    \put(0.575,0.10){\color[rgb]{0,0,0}\makebox(0,0)[lb]{\smash{$g_1$}}}%
    \put(0.575,0.21336354){\color[rgb]{0,0,0}\makebox(0,0)[lb]{\smash{$g_2$}}}%
    \put(0.57,0.42884858){\color[rgb]{0,0,0}\makebox(0,0)[lb]{\smash{$g_3$}}}%
    \put(0.57,0.55080397){\color[rgb]{0,0,0}\makebox(0,0)[lb]{\smash{$g_4$}}}%
    \put(0.57,0.67){\color[rgb]{0,0,0}\makebox(0,0)[lb]{\smash{$g_5$}}}%
    \put(0.81,0.38300068){\color[rgb]{0,0,0}\makebox(0,0)[lb]{\smash{$\gamma_1$}}}%
    \put(0.74,0.37841592){\color[rgb]{0,0,0}\makebox(0,0)[lb]{\smash{$\gamma_2$}}}%
    \put(0.67,0.38483462){\color[rgb]{0,0,0}\makebox(0,0)[lb]{\smash{$\gamma_3$}}}%
    \put(0.50067126,0.38116674){\color[rgb]{0,0,0}\makebox(0,0)[lb]{\smash{$\gamma_4$}}}%
    \put(0.35,0.38300068){\color[rgb]{0,0,0}\makebox(0,0)[lb]{\smash{$\gamma_5$}}}%
    \put(0.24,0.3885025){\color[rgb]{0,0,0}\makebox(0,0)[lb]{\smash{$\gamma_6$}}}%
    \put(0.13,0.38){\color[rgb]{0,0,0}\makebox(0,0)[lb]{\smash{$\gamma_7$}}}%
  \end{picture}%
\endgroup
\end{center}
\caption{Second case. } \label{fig:ZVK_2C2_a}
\end{figure}

First, we consider the case where the two quadric curves have an intersection point of multiplicity $4$ and the linear component is not the common tangent at the aforementioned point; an example of such a pair of curves is given (in homogeneous coordinates $[x:y:z]$ by the equation 
\begin{displaymath}
(yz-x^2)(yz-x^2 - y^2) = 0 \; .
\end{displaymath}
It is a straightforward application of the Zariski--Van Kampen  method (see Section 1.1.2 in \cite{AThese} or \cite{LNZV} for an account of this method) to show that the local monodromy around the linear component centralises the whole fundamental group. Indeed, if one looks at the loops given inFigure~\ref{fig:ZVK_2C2_b} then the braid monodromy relations yield that the fundamental group of the complement of this curve is generated by the loops $g_1\,,g_2$ and $g_3$ and that $g_3$ centralises the other two. Therefore, no such representation  can satisfy conditions \textbf{(C1}) and \textbf{(C2}).

The case where the two conics intersect at two points is treated in much the same way (see Figure~\ref{fig:ZVK_2C2_a}); here again the local monodromy around the line, materialised by the loop $g_3$ must centralise the entire group and so be trivial, which contradicts conditions \textbf{(C1}) and \textbf{(C2}).

\subsubsection{Curves of type $C_2\sqcup 3 C_1$}
\begin{figure}[!h]
\begin{center}
\begingroup%
  \makeatletter%
  \providecommand\color[2][]{%
    \errmessage{(Inkscape) Color is used for the text in Inkscape, but the package 'color.sty' is not loaded}%
    \renewcommand\color[2][]{}%
  }%
  \providecommand\transparent[1]{%
    \errmessage{(Inkscape) Transparency is used (non-zero) for the text in Inkscape, but the package 'transparent.sty' is not loaded}%
    \renewcommand\transparent[1]{}%
  }%
  \providecommand\rotatebox[2]{#2}%
  \ifx\svgwidth\undefined%
    \setlength{\unitlength}{350.30528679bp}%
    \ifx\svgscale\undefined%
      \relax%
    \else%
      \setlength{\unitlength}{\unitlength * \real{\svgscale}}%
    \fi%
  \else%
    \setlength{\unitlength}{\svgwidth}%
  \fi%
  \global\let\svgwidth\undefined%
  \global\let\svgscale\undefined%
  \makeatother%
  \begin{picture}(1,0.48869902)%
    \put(0,0){\includegraphics[width=\unitlength]{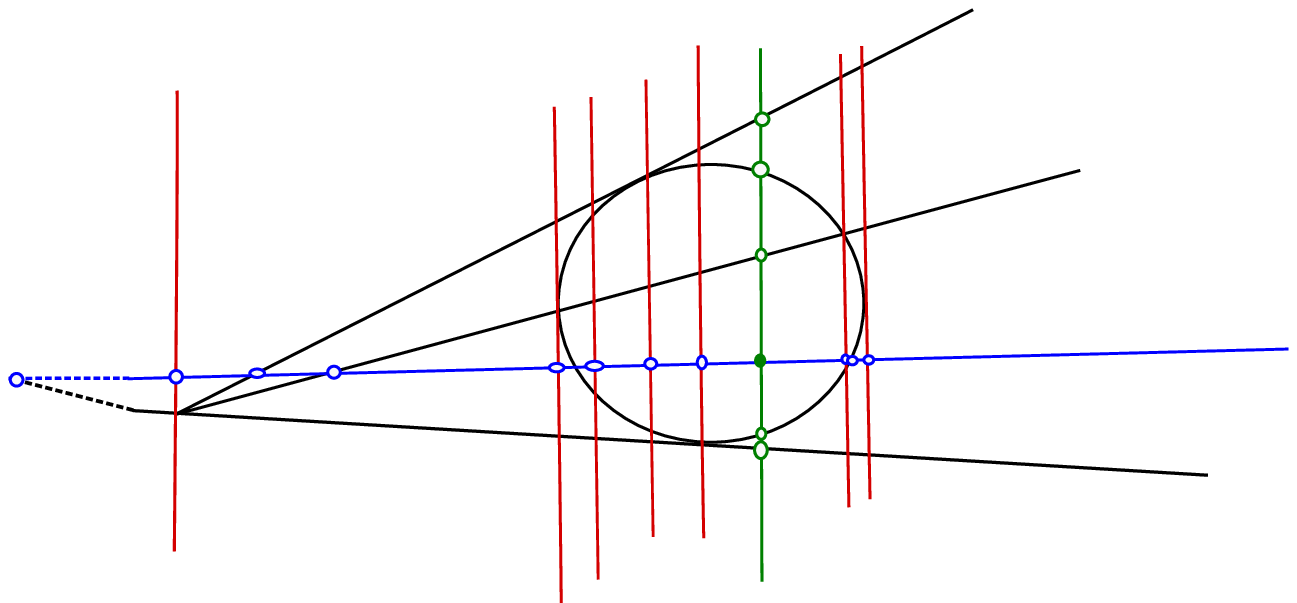}}%
    \put(0.96762661,0.17){\color[rgb]{0,0,0}\makebox(0,0)[lb]{\smash{$L_0$}}}%
    \put(0.65,0.167){\color[rgb]{0,0,0}\makebox(0,0)[lb]{\smash{$\gamma_1$}}}%
    \put(0.62,0.22){\color[rgb]{0,0,0}\makebox(0,0)[lb]{\smash{$\gamma_2$}}}%
    \put(0.54,0.16){\color[rgb]{0,0,0}\makebox(0,0)[lb]{\smash{$\gamma_3$}}}%
    \put(0.55,0.20772693){\color[rgb]{0,0,0}\makebox(0,0)[lb]{\smash{$x_0$}}}%
    \put(0.47,0.16){\color[rgb]{0,0,0}\makebox(0,0)[lb]{\smash{$\gamma_4$}}}%
    \put(0.455,0.2){\color[rgb]{0,0,0}\makebox(0,0)[lb]{\smash{$\gamma_5$}}}%
    \put(0.38,0.2){\color[rgb]{0,0,0}\makebox(0,0)[lb]{\smash{$\gamma_6$}}}%
    \put(0.25,0.2){\color[rgb]{0,0,0}\makebox(0,0)[lb]{\smash{$\gamma_7$}}}%
    \put(0.13456521,0.21){\color[rgb]{0,0,0}\makebox(0,0)[lb]{\smash{$\gamma_9$}}}%
    \put(0.19,0.2){\color[rgb]{0,0,0}\makebox(0,0)[lb]{\smash{$\gamma_8$}}}%
    \put(0.56,0.099){\color[rgb]{0,0,0}\makebox(0,0)[lb]{\smash{$g_1$}}}%
    \put(0.59,0.15){\color[rgb]{0,0,0}\makebox(0,0)[lb]{\smash{$g_2$}}}%
    \put(0.56,0.29){\color[rgb]{0,0,0}\makebox(0,0)[lb]{\smash{$g_3$}}}%
    \put(0.59,0.35){\color[rgb]{0,0,0}\makebox(0,0)[lb]{\smash{$g_4$}}}%
    \put(0.56,0.39){\color[rgb]{0,0,0}\makebox(0,0)[lb]{\smash{$g_5$}}}%
    \put(0.68,0.215){\color[rgb]{0,0,0}\makebox(0,0)[lb]{\smash{$\gamma_0$}}}%
     \put(-0.00089477,0.2){\color[rgb]{0,0,0}\makebox(0,0)[lb]{\smash{$\gamma_{10}$}}}%
  \end{picture}%
\endgroup
\end{center}
\caption{First case. } \label{fig:ZVK_3C2_c}
\end{figure}
We already eliminated one of those when looking at triple singularities; let us now deal with the others.

\paragraph{Case 1: the three lines have a common point.} If two of them are tangent to the conic then the fundamental group of the complement of this curve is given by:
\begin{displaymath}
\Gamma_2 := \langle a,b,c \, | \,  [a,b]=[a,c^{-1}bc] = 1, \, (bc)^2=(cb)^2 \rangle. 
\end{displaymath}
More precisely, if one applies the Zariski--Van Kampen method to this curve, one can identify $a,\,b$ and $c$ with (respectively) the loops $g_3, \,g_2$ and $g_1$ in Figure~\ref{fig:ZVK_3C2_c}.

If one sets $A:=\rho(a)$, $B:= \rho(b)$ and $C:=\rho(c)$ then it follows from the fact that the local monodromy must be non--trivial and Lemma~\ref{lem:centralPSL} that $B$ must commute to $C^{-1}BC$. Thus, the commutator $[B,C^2]$ must be trivial, hence $C^2$ must be the identity element in $PSL_2(\CC)$ and since $C$ cannot be trivial and $[A,B] = 1$ then up to global conjugacy $\rho$ must be of the following type:
\begin{displaymath}
\rho : a  \mapsto \begin{pmatrix}
u & 0 \\ 
0 & u^{-1}
\end{pmatrix}  , \quad 
b  \mapsto \begin{pmatrix}
v & 0 \\ 
0 & v^{-1}
\end{pmatrix}, \quad
c  \mapsto \begin{pmatrix}
0 & 1 \\ 
-1 & 0
\end{pmatrix} \; , \text{ for }  u,v \in \CC^* \; .
\end{displaymath}
This is the second case in Theorem~\ref{thA:quintics}.

\begin{figure}[!h]
\begin{center}
\begingroup%
  \makeatletter%
  \providecommand\color[2][]{%
    \errmessage{(Inkscape) Color is used for the text in Inkscape, but the package 'color.sty' is not loaded}%
    \renewcommand\color[2][]{}%
  }%
  \providecommand\transparent[1]{%
    \errmessage{(Inkscape) Transparency is used (non-zero) for the text in Inkscape, but the package 'transparent.sty' is not loaded}%
    \renewcommand\transparent[1]{}%
  }%
  \providecommand\rotatebox[2]{#2}%
  \ifx\svgwidth\undefined%
    \setlength{\unitlength}{356.25562786bp}%
    \ifx\svgscale\undefined%
      \relax%
    \else%
      \setlength{\unitlength}{\unitlength * \real{\svgscale}}%
    \fi%
  \else%
    \setlength{\unitlength}{\svgwidth}%
  \fi%
  \global\let\svgwidth\undefined%
  \global\let\svgscale\undefined%
  \makeatother%
  \begin{picture}(1,0.48053655)%
    \put(0,0){\includegraphics[width=\unitlength]{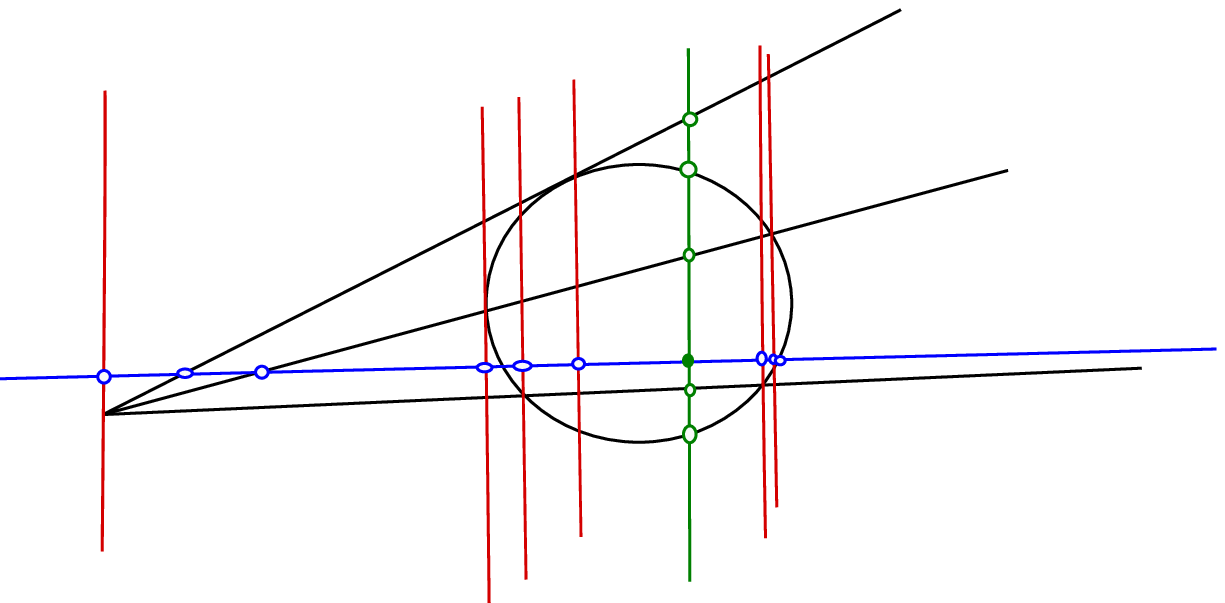}}%
    \put(0.96762661,0.17){\color[rgb]{0,0,0}\makebox(0,0)[lb]{\smash{$L_0$}}}%
    \put(0.64154747,0.18855861){\color[rgb]{0,0,0}\makebox(0,0)[lb]{\smash{$\gamma_1$}}}%
    \put(0.64,0.22){\color[rgb]{0,0,0}\makebox(0,0)[lb]{\smash{$\gamma_2$}}}%
    \put(0.59,0.22){\color[rgb]{0,0,0}\makebox(0,0)[lb]{\smash{$\gamma_3$}}}%
    \put(0.53,0.20772693){\color[rgb]{0,0,0}\makebox(0,0)[lb]{\smash{$x_0$}}}%
    \put(0.48389528,0.21){\color[rgb]{0,0,0}\makebox(0,0)[lb]{\smash{$\gamma_4$}}}%
    \put(0.43002131,0.178){\color[rgb]{0,0,0}\makebox(0,0)[lb]{\smash{$\gamma_5$}}}%
    \put(0.365,0.21){\color[rgb]{0,0,0}\makebox(0,0)[lb]{\smash{$\gamma_6$}}}%
    \put(0.20488489,0.17){\color[rgb]{0,0,0}\makebox(0,0)[lb]{\smash{$\gamma_7$}}}%
    \put(0.13456521,0.21){\color[rgb]{0,0,0}\makebox(0,0)[lb]{\smash{$\gamma_8$}}}%
    \put(0.05,0.17154579){\color[rgb]{0,0,0}\makebox(0,0)[lb]{\smash{$\gamma_9$}}}%
    \put(0.57,0.12617822){\color[rgb]{0,0,0}\makebox(0,0)[lb]{\smash{$g_1$}}}%
    \put(0.56782522,0.16303938){\color[rgb]{0,0,0}\makebox(0,0)[lb]{\smash{$g_2$}}}%
    \put(0.56669099,0.27078728){\color[rgb]{0,0,0}\makebox(0,0)[lb]{\smash{$g_3$}}}%
    \put(0.57406327,0.36){\color[rgb]{0,0,0}\makebox(0,0)[lb]{\smash{$g_4$}}}%
    \put(0.53039702,0.405){\color[rgb]{0,0,0}\makebox(0,0)[lb]{\smash{$g_5$}}}%
  \end{picture}%
\endgroup
\end{center}
\caption{Second case. } \label{fig:ZVK_3C2_a}
\end{figure}

On the other hand, if at most one of the lines is tangent to the conic, we derive from the Zariski--Van Kampen method that the local monodromy around one of the non--tangent lines must centralise the whole fundamental group. More precisely, braid monodromy relations from Figure~\ref{fig:ZVK_3C2_a} yield that the loop $g_3$ is in the centraliser of the fundamental group of the complement of the pictured curve.
\begin{center}
\begin{figure}

\scalebox{1} 
{
\begin{pspicture}(0,-3.85)(17.841875,3.85)
\definecolor{color3}{rgb}{0.00392156862745098,0.00392156862745098,0.00392156862745098}
\definecolor{color18}{rgb}{0.0196078431372549,0.611764705882353,0.00392156862745098}
\definecolor{color36}{rgb}{0.0392156862745098,0.9529411764705882,0.09803921568627451}
\definecolor{color20}{rgb}{0.19215686274509805,0.6274509803921569,0.00392156862745098}
\pscircle[linewidth=0.04,dimen=outer](6.53,-0.28){1.79}
\psline[linewidth=0.04cm,linecolor=color3](0.16,-3.83)(7.66,3.23)
\psline[linewidth=0.04cm,linecolor=color3](4.32,3.83)(11.12,-1.71)
\psline[linewidth=0.04cm,linecolor=color3](0.0,-3.47)(13.8,-0.57)
\psline[linewidth=0.04cm,linecolor=color18](14.3,-0.65)(1.44,-0.85)

\usefont{T1}{ptm}{m}{n}
\rput(11.3142185,-0.445){\color{color36}$L$}
\pscustom[linewidth=0.04]
{
\newpath
\moveto(9.02,-0.73)
\lineto(8.96,-0.83)
\curveto(8.93,-0.88)(8.88,-0.975)(8.86,-1.02)
\curveto(8.84,-1.065)(8.795,-1.17)(8.77,-1.23)
\curveto(8.745,-1.29)(8.705,-1.39)(8.69,-1.43)
\curveto(8.675,-1.47)(8.65,-1.56)(8.64,-1.61)
\curveto(8.63,-1.66)(8.62,-1.75)(8.62,-1.79)
\curveto(8.62,-1.83)(8.62,-1.915)(8.62,-1.96)
\curveto(8.62,-2.005)(8.635,-2.085)(8.65,-2.12)
\curveto(8.665,-2.155)(8.71,-2.2)(8.74,-2.21)
\curveto(8.77,-2.22)(8.825,-2.23)(8.85,-2.23)
\curveto(8.875,-2.23)(8.92,-2.225)(8.94,-2.22)
\curveto(8.96,-2.215)(9.0,-2.19)(9.02,-2.17)
\curveto(9.04,-2.15)(9.075,-2.105)(9.09,-2.08)
\curveto(9.105,-2.055)(9.13,-1.995)(9.14,-1.96)
\curveto(9.15,-1.925)(9.16,-1.86)(9.16,-1.83)
\curveto(9.16,-1.8)(9.155,-1.75)(9.15,-1.73)
\curveto(9.145,-1.71)(9.12,-1.66)(9.1,-1.63)
}
\pscustom[linewidth=0.04]
{
\newpath
\moveto(9.02,-1.51)
\lineto(8.95,-1.47)
\curveto(8.915,-1.45)(8.86,-1.425)(8.84,-1.42)
\curveto(8.82,-1.415)(8.785,-1.4)(8.77,-1.39)
}
\pscustom[linewidth=0.04]
{
\newpath
\moveto(9.02,-0.71)
\lineto(9.04,-0.63)
\curveto(9.05,-0.59)(9.06,-0.52)(9.06,-0.49)
\curveto(9.06,-0.46)(9.065,-0.395)(9.07,-0.36)
\curveto(9.075,-0.325)(9.085,-0.27)(9.09,-0.25)
\curveto(9.095,-0.23)(9.105,-0.19)(9.11,-0.17)
\curveto(9.115,-0.15)(9.115,-0.105)(9.11,-0.08)
\curveto(9.105,-0.055)(9.105,0.0)(9.11,0.03)
\curveto(9.115,0.06)(9.125,0.11)(9.13,0.13)
\curveto(9.135,0.15)(9.145,0.19)(9.15,0.21)
\curveto(9.155,0.23)(9.18,0.26)(9.2,0.27)
\curveto(9.22,0.28)(9.265,0.295)(9.29,0.3)
\curveto(9.315,0.305)(9.355,0.31)(9.37,0.31)
\curveto(9.385,0.31)(9.425,0.305)(9.45,0.3)
\curveto(9.475,0.295)(9.52,0.27)(9.54,0.25)
\curveto(9.56,0.23)(9.6,0.195)(9.62,0.18)
\curveto(9.64,0.165)(9.675,0.13)(9.69,0.11)
\curveto(9.705,0.09)(9.715,0.045)(9.71,0.02)
\curveto(9.705,-0.005)(9.695,-0.06)(9.69,-0.09)
\curveto(9.685,-0.12)(9.665,-0.17)(9.65,-0.19)
\curveto(9.635,-0.21)(9.605,-0.24)(9.59,-0.25)
\curveto(9.575,-0.26)(9.545,-0.28)(9.53,-0.29)
}
\pscustom[linewidth=0.04]
{
\newpath
\moveto(9.36,-0.37)
\lineto(9.29,-0.38)
\curveto(9.255,-0.385)(9.195,-0.395)(9.17,-0.4)
\curveto(9.145,-0.405)(9.09,-0.42)(9.06,-0.43)
}
\pscustom[linewidth=0.04]
{
\newpath
\moveto(9.02,-0.71)
\lineto(8.95,-0.67)
\curveto(8.915,-0.65)(8.86,-0.61)(8.84,-0.59)
\curveto(8.82,-0.57)(8.78,-0.53)(8.76,-0.51)
\curveto(8.74,-0.49)(8.695,-0.455)(8.67,-0.44)
\curveto(8.645,-0.425)(8.59,-0.39)(8.56,-0.37)
\curveto(8.53,-0.35)(8.485,-0.325)(8.47,-0.32)
\curveto(8.455,-0.315)(8.425,-0.3)(8.41,-0.29)
\curveto(8.395,-0.28)(8.355,-0.27)(8.33,-0.27)
\curveto(8.305,-0.27)(8.255,-0.27)(8.23,-0.27)
\curveto(8.205,-0.27)(8.16,-0.28)(8.14,-0.29)
\curveto(8.12,-0.3)(8.09,-0.325)(8.08,-0.34)
\curveto(8.07,-0.355)(8.05,-0.39)(8.04,-0.41)
\curveto(8.03,-0.43)(8.035,-0.465)(8.05,-0.48)
\curveto(8.065,-0.495)(8.105,-0.525)(8.13,-0.54)
\curveto(8.155,-0.555)(8.22,-0.58)(8.26,-0.59)
}
\pscustom[linewidth=0.04]
{
\newpath
\moveto(8.32,-0.61)
\lineto(8.37,-0.61)
\curveto(8.395,-0.61)(8.445,-0.605)(8.47,-0.6)
\curveto(8.495,-0.595)(8.535,-0.58)(8.55,-0.57)
\curveto(8.565,-0.56)(8.595,-0.54)(8.61,-0.53)
\curveto(8.625,-0.52)(8.67,-0.5)(8.7,-0.49)
}
\psdots[dotsize=0.12,linecolor=color3,fillstyle=solid,dotstyle=o](9.9,-0.71)
\psdots[dotsize=0.12,linecolor=color3,fillstyle=solid,dotstyle=o](8.22,-0.73)
\psdots[dotsize=0.12,linecolor=color3,fillstyle=solid,dotstyle=o](4.84,-0.79)
\psdots[dotsize=0.12,linecolor=color3,fillstyle=solid,dotstyle=o](3.36,-0.83)
\usefont{T1}{ptm}{m}{n}
\rput(8.58422,-0.125){$a$}
\usefont{T1}{ptm}{m}{n}
\rput(9.394218,-1.765){$b$}
\usefont{T1}{ptm}{m}{n}
\rput(9.944219,0.175){$c$}

\psdots[dotsize=0.12,linecolor=color3,fillstyle=solid,dotstyle=o](13.36,-0.67)

\end{pspicture} 
}

\caption{Third case}
\label{fig:ZVK_3C2_b}
\end{figure}
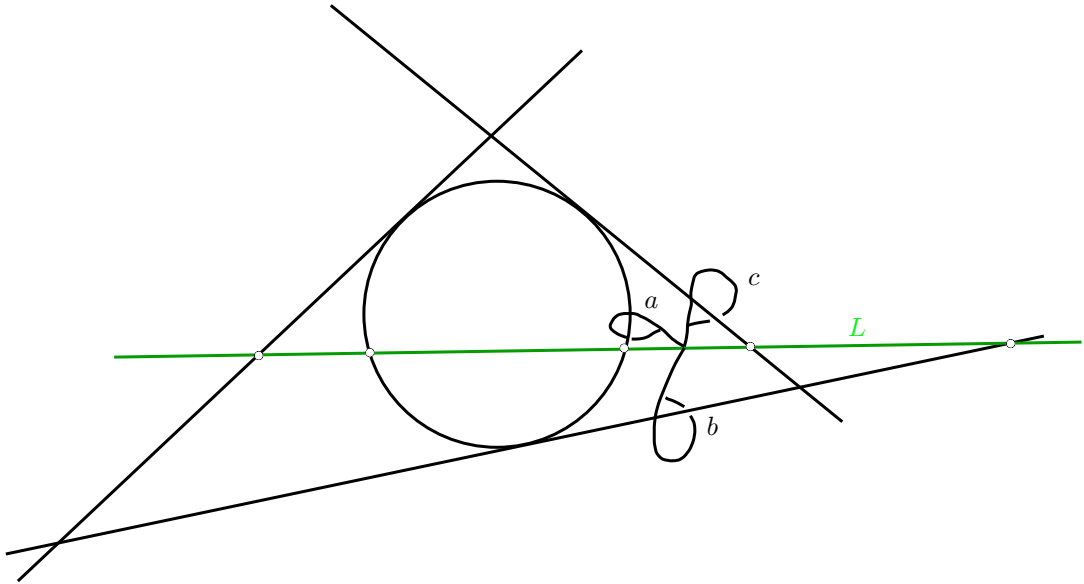

\end{center}

\paragraph{Case 2: the three lines do not have a common point.} In this case one must have at least two lines tangent to the conic in order to get a non--abelian fundamental group. If all three lines are tangent to the conic then the fundamental group of the complement is isomorphic to 
\begin{displaymath}
\Gamma_2^\prime := \langle a,b,c \, | \, (ab)^2 (ba)^{-2} = (ac)^2 (ca)^{-2}= [b,c]=1 \rangle \;  ,
\end{displaymath}
which we extensively studied in \cite{A2} and corresponds to the first case in Theorem~\ref{thA:quintics}. Otherwise, it is once again a consequence of braid monodromy relations that the local monodromy about any line not tangent to the conic must commute to every loop in the fundamental group (see Figure~\ref{fig:ZVK_3C2_b}), thus we can discard this type of curve as well.

\section{Mapping class group orbits}\label{sec:MCGOrbits}
We saw in Section \ref{sec:Quintics} that there are at most four families of "interesting" representations of fundamental group of complement of quintics into $PSL_2(\CC)$. The aim of this paper is to explain how one can use this to obtain isomonodromic deformations of the five punctured sphere and to describe the associated mapping class group orbits.
\subsection{General method}

Let $\nabla$ be a rank two logarithmic flat $\mathfrak{sl}_2(\CC)$--connection over $\PP^2(\CC)$ whose polar locus is exactly some quintic plane curve $Q \subset \PP^2(\CC)$ and let $\rho : \pi_1(\PP^2(\CC)\setminus Q) \rightarrow SL_2(\CC)$ be its monodromy representation. Assume that the representation $\rho$ is non--degenerate in the following sense.

\begin{defi}\label{def:nonDeg}
We say that the monodromy representation associated with a rank two logarithmic flat $\mathfrak{sl}_2(\CC)$--connection over $\PP^2-\Qr$ is non--degenerate if
\begin{itemize}
\item its image forms an irreducible subgroup of $SL_2(\CC)$ ; 
\item its local monodromy around any irreducible component of $\Qr$ is projectively non--trivial (i.e is non-trivial in $PSL_2(\CC)$).
\end{itemize}
\end{defi}

If the representation $\rho$ factors through an orbicurve, then it is a known fact~\cite{CorSim,LTP} that $\rho$ can be obtained as the monodromy of the pullback of some logarithmic flat connection over a curve. Isomonodromic deformations of punctured spheres arising from such constructions have been extensively studied by Diarra~\cite{Diar1,Diar2}, so let us assume that $\rho$ does not factor through an orbicurve. Therefore, it follows from Theorem~\ref{thA:quintics} that the pair $(\pi_1(\PP^2(\CC)\setminus Q),P\circ\rho)$ (where $P : SL_2(\CC) \rightarrow PSL_2(\CC)$ is the canonical projection) must be one of the following:
\begin{enumerate}
\item $\pi_1(\PP^2(\CC)\setminus Q) \cong \langle a,b,c \, | \, (ab)^2 (ba)^{-2} = (ac)^2 (ca)^{-2}= [b,c]=1 \rangle \;  $,
$$\rho : a  \mapsto \begin{pmatrix}
0 & 1 \\ 
-1 & 0
\end{pmatrix}  , \quad 
b  \mapsto \begin{pmatrix}
u & 0 \\ 
0 & u^{-1}
\end{pmatrix}, \quad
c  \mapsto \begin{pmatrix}
v & 0 \\ 
0 & v^{-1}
\end{pmatrix} \; , \text{ for some }  u,v \in \CC^* \; ;$$
\item $\pi_1(\PP^2(\CC)\setminus Q) \cong \langle a,b,c \, | \, [a,b] = [a,c^{-1}bc] = 1, \, (bc)^2 = (cb)^2\rangle$,
\begin{displaymath}
\rho : a  \mapsto \begin{pmatrix}
u & 0 \\ 
0 & u^{-1}
\end{pmatrix}  , \quad 
b  \mapsto \begin{pmatrix}
v & 0 \\ 
0 & v^{-1}
\end{pmatrix}, \quad
c  \mapsto \begin{pmatrix}
0 & 1 \\ 
-1 & 0
\end{pmatrix} \; , \text{ for some }  u,v \in \CC^* \; ;
\end{displaymath}
\item $\pi_1(\PP^2(\CC)\setminus Q) \cong \ZZ^2 \rtimes \ZZ$,
\begin{displaymath}
\rho : a  \mapsto \begin{pmatrix}
u &  0 \\ 
0 & u^{-1}
\end{pmatrix}  , \quad 
b  \mapsto \begin{pmatrix}
u^{-1} & 0 \\ 
0 & u
\end{pmatrix}, \quad
c  \mapsto \begin{pmatrix}
0 & 1 \\ 
-1 & 0
\end{pmatrix} \; , \text{ for some }  u \in \CC^* \; .
\end{displaymath}
\end{enumerate}

Let $L$ be a generic line in the projective plane $\PP^2(\CC)$ ; then $L$ must intersect the quintic $Q$ at exactly five points; identify $L$ to $\PP^1$ choosing a coordinate so that these are $0,1,\infty$ and some $t_1, t_2$. By restricting $\nabla$ to $L$, we get a logarithmic flat connection $\nabla_L$ over the punctured Riemann sphere $\PP^1_5 := \PP^1(\CC) \setminus \lbrace 0,1,t_1,t_2,\infty \rbrace$ whose monodromy $\rho_L$ is defined through the following diagram: 
\begin{displaymath}
\xymatrix{
\pi_1(\PP^1_5) \cong \Fb_4  \ar[rd]_{\rho_L}  \ar[r]^{\tau}& \pi_1(\PP^2(\CC) - Q) \ar[d]^{\rho}\\
 & SL_2(\CC) 
}
\end{displaymath}
where $\tau$ is the natural morphism given by restriction to $L$ ; the Lefschetz hyperplane theorem (see~\cite{Milnor}, Theorem 7.4) shows that it is in fact onto. By construction, there exists a Zariski--open subset $U$ of the dual projective space $\widehat{\PP^2(\CC)}$ such that the family $(\nabla_L)_{L \in U}$ is an isomorphic deformation of the five punctured sphere. 

\paragraph{Local monodromy}

Using the Zariski--Van Kampen method, it is actually quite straightforward to explicitly describe $\rho_L$ in the three cases above. Indeed, if one denotes $\Fb_4:=\langle d_1, \ldots, d_4 \, | \, \emptyset \rangle$ then the Lefschetz morphism $\tau$ is given by (respectively):

$$\begin{array}{ccllcll}
\mathrm{1.} & d_1 & \mapsto  b & \qquad &\mathrm{2.} & d_1 & \mapsto  c  \\ 
  & d_2 & \mapsto a& \qquad &   & d_2 & \mapsto b \\ 
  & d_3 & \mapsto b a b^{-1} & \qquad &   & d_3 & \mapsto a \\ 
  &d_4 & \mapsto c& \qquad &  & d_4 & \mapsto b \\ 
  &  &  &  & &   &   \\ 
\mathrm{3. \quad  (a)}  & d_1 & \mapsto  b & & \mathrm{3. \quad (b)} & d_1 & \mapsto  b \\ 
  & d_2 & \mapsto b a & &&d_2 & \mapsto a \\ 
  & d_3 & \mapsto a  && &d_3 & \mapsto a \\ 
  & d_4 & \mapsto  b^{-1} a b & & & d_4 & \mapsto  b^{-1} a b  \\ 
\end{array} $$

Note that each of the images of the $d_i$ correspond to the local monodromy around some irreducible component of the polar locus, in the following sense: let $C$ be an irreducible curve contained in the polar locus of some logarithmic flat $\mathfrak{sl}_2(\CC)$--connection $\nabla$ over $\PP^2$, with associated monodromy representation $\varrho$. Set a point $p \in C$ such that no other irreducible curve in the polar locus of the connection passes through $p$; then if $U$ is a sufficiently small analytic neighbourhood of $p$ one gets:
\begin{displaymath}
\pi_1(U \setminus C \cap U) \cong \ZZ \;.
\end{displaymath}
Let $\gamma$ be any loop generating the above cyclic group; the conjugacy class of the matrix $\varrho(\gamma)$ does not depend on the choice of a base point for the fundamental group. Indeed, if $\gamma$ is chosen as above for some base point $q$ and if $q^\prime$ is some other point in the complement of the polar locus, then if one takes $\delta$ to be any path between $q^\prime$ and $q$, the loop $\delta \cdot \gamma \cdot \delta^{-1}$ is an element of the fundamental group of the complement based at $q^\prime$ whose monodromy is conjugate to $\varrho (\gamma)$.

\begin{defi}\label{def:locMon}
Using the notations above, define the local monodromy of $\nabla$ around $C$ as the conjugacy class of the matrix $\varrho (\gamma)$.
\end{defi}

As such, the restricted monodromy $\rho_L$ must be given (up to global conjugacy) by the matrices appearing in Table~\ref{tab:monodromyLinesQuint}.

\begin{table}[!!h]
\begin{center}
\begin{tabular}{c|c|c|c|c|c}
Case & $x=0$ & $x=1$ & $x=t_1$ & $x=t_2$ & $x=\infty$ \\ 
\hline 
& & & & & \\
1.&$\begin{pmatrix}
v& 0 \\ 
0 & v^{-1}
\end{pmatrix}$ & $\begin{pmatrix}
u & 0 \\ 
0 &  u^{-1}
\end{pmatrix}$ & $\begin{pmatrix}
0 & 1 \\ 
-1 & 0
\end{pmatrix}$ & $\begin{pmatrix}
0 & u^{2}\\ 
-u^{-2} & 0
\end{pmatrix}$ & $\begin{pmatrix}
-uv^{-1} & 0 \\ 
0 & -u^{-1} v
\end{pmatrix}$ \\ 
& & & & \\
\hline 
& & & & & \\
2.&$\begin{pmatrix}
0& 1 \\ 
-1 &0
\end{pmatrix}$ & $\begin{pmatrix}
v & 0 \\ 
0 &  v^{-1}
\end{pmatrix}$ & $\begin{pmatrix}
u & 0 \\ 
0 &u^{-1}
\end{pmatrix}$ & $\begin{pmatrix}
v & 0 \\ 
0 &  v^{-1}
\end{pmatrix}$ & $\begin{pmatrix}
0 & -(uv^2)^{-1} \\ 
uv^2& 0 
\end{pmatrix}$ \\ 
& & & & \\
\hline 
& & & & & \\
3.  (a)&$\begin{pmatrix}
0& 1 \\ 
-1 &0
\end{pmatrix}$ & $\begin{pmatrix}
0& u^{-1} \\ 
-u &0
\end{pmatrix}$  & $\begin{pmatrix}
u & 0 \\ 
0 &u^{-1}
\end{pmatrix}$ & $\begin{pmatrix}
u^{-1} & 0 \\ 
0 & u
\end{pmatrix}$ & $\begin{pmatrix}
-u & 0 \\ 
0 &-u^{-1}
\end{pmatrix}$ \\ 
& & & & \\
\hline 
& & & & & \\
3.  (b)&$\begin{pmatrix}
0& -1 \\ 
1 &0
\end{pmatrix}$ &  $\begin{pmatrix}
u & 0 \\ 
0 &u^{-1}
\end{pmatrix}$ & $\begin{pmatrix}
u & 0 \\ 
0 &u^{-1}
\end{pmatrix}$ & $\begin{pmatrix}
u^{-1} & 0 \\ 
0 & u
\end{pmatrix}$ & $\begin{pmatrix}
0& u^{-1} \\ 
-u &0
\end{pmatrix}$  \\ 
& & & & \\
\end{tabular}
\end{center}  
\caption{Monodromy on a generic line.}
\label{tab:monodromyLinesQuint}
\end{table}

\begin{rema}
Note however that not all virtually abelian representations have finite mapping class group orbits. Indeed, this is even false in the four punctured case, as evidenced by Mazzocco's work on Picard's solutions of the Painlevé VI equation~\cite{MazzPic}.
\end{rema}

\subsection{Mapping class group orbits}\label{sec:MCGorbits}
The link between algebraic isomonodromic deformations of punctured sphere and finite orbits under the mapping class group action have been extensively studied in recent years~\cite{DubMazz, Cousin,LT}. In this paragraph, we describe the orbits associated with the isomonodromic deformations discussed earlier and show that only two of them are in fact distinct.

First let us fix some notations; the class of a representation $$\rho : \Fb_4 = \langle d_1, \ldots, d_4 \, | \, \emptyset \rangle \rightarrow SL_2(\CC)$$ in the $SL_2(\CC)$--character variety $\CV(0,5)$ of the five punctured sphere is fully determined by the following:
\begin{align*}
t_1&:=\Tr(\rho(d_1)), \, t_2:=\Tr(\rho(d_2)), \,  t_3:=\Tr(\rho(d_3)), \,  \\
t_4&:=\Tr(\rho(d_4)), \,  t_5 := \Tr(\rho(d_1d_2d_3d_4)), \, \\
r_1&:=\Tr(\rho(d_1d_2)), \,  r_2:=\Tr(\rho(d_1d_3)), \,  r_3:=\Tr(\rho(d_1d_4)), \, \\
 r_4&:=\Tr(\rho(d_2d_3)), \,  r_5:=\Tr(\rho(d_2d_4)), \,   r_6:=\Tr(\rho(d_3d_4))
\end{align*}
and
\begin{align*}
r_7&:=\Tr(\rho(d_1d_2d_3)), \, r_8:=\Tr(\rho(d_1d_2d_4)), \, r_9:=\Tr(\rho(d_1d_3d_4)), \,
r_{10}:=\Tr(\rho(d_2d_3d_4)) .
\end{align*}

We know from Cousin's work~\cite{Cousin} that there is a correspondence between algebraic isomonodromic deformations of the five punctured sphere and finite orbits under the action of the pure mapping class group $\PMCG(0,5)$ on the character variety $\CV(0,5)$. It is known that (see for example Section~9.3 in~\cite{FarMar}) one has the following isomorphism (for any $n \geq 3$):
\begin{displaymath}
\PMCG(0,n+1) \cong PB_n \slash Z(PB_n) \; ;
\end{displaymath} 
where $PB_n$ is the pure braid group on $n$ strand, i.e the kernel of the natural group homomorphism $B_n \twoheadrightarrow \mathfrak{S}_n$. As such, $\PMCG(0,n+1)$ is an index $n!$ subgroup of the complete mapping class group $\MCG(0,n+1)$.

\subsection{Orbits under the pure mapping class group}

In order to prove Theorem~\ref{thA:MCGOrbits}, we need to explicitly compute the orbits of the families of representations concerned. We have done so using a straightforward "brute--force" algorithm implemented in \verb+Maple+. We make use of the following facts:
\begin{itemize}
\item the braid group on four strands $$B_4= \langle \sigma_1, \sigma_{2}, \sigma_2 \, | \, [\sigma_1,\sigma_3] ,\, \sigma_1 \sigma_2\sigma_1 = \sigma_2 \sigma_1 \sigma_2, \,\sigma_3 \sigma_2\sigma_3 = \sigma_2 \sigma_3 \sigma_2 \rangle \;$$ acts on the character variety $\CV(0,5)$ as follows: the braid $\sigma_i$ sends the class of some representation $\rho : \Fb_4 \rightarrow SL_2(\CC)$ to that of the representation $\rho^{\sigma_i}$ defined by:
\begin{displaymath}
\rho^{\sigma_i} : d_j \mapsto \left\lbrace \begin{array}{l}
\rho(d_i)\rho(d_{i+1})\rho(d_i)^{-1} \text{ if } j=i \\ 
\rho(d_i) \text{ if } j=i+1\\ 
\rho(d_j) \text{ else }
\end{array} \right. \; .
\end{displaymath}
\item The pure braid group $PB_4$ is generated by the following braids, for $1 \leq i <j \leq 3$:
\begin{displaymath}
\sigma_{i,j} := (\sigma_{j} \ldots \sigma_{i+1}) \sigma_i(\sigma_{j} \ldots \sigma_{i+1})^{-1} \; ;
\end{displaymath}
\item the centre of the pure braid group on four strand is generated by the squared fundamental braid
\begin{displaymath}
\Delta^2 := (\sigma_3 \sigma_2 \sigma_1 \sigma_2 \sigma_1 \sigma_1)^2 \;.
\end{displaymath}
It is quite straightforward to see that $\Delta^2$ acts on a representation $\rho$ as the conjugacy by $\rho(d_1) \rho(d_2)\rho(d_3)\rho(d_4)$ and thus acts trivially on $\CV(0,5)$. Therefore the orbit of the class of $\rho$ under the action of $\PMCG(0,5)$ is the same as its orbit under that of $PB_4$.
\end{itemize}

We will now look in turn at the two conclusions of Theorem~\ref{thA:MCGOrbits} by computing the associated orbits and using this data to reach the desired conclusion.

It follows from the remarks we made at the beginning of the proof that one only needs to compute the orbit of the classes of the representations $[\rho_i]$ under the pure braid group on four strand; which we achieve by using simple "brute--force" algorithms (should the reader want to know exactly how this is achieved, these algorithms are written down in Appendix A of \cite{AThese}).

In the coordinates $(\underline{t}\, | \, \underline{r})$ described earlier, the orbits of the classes of the representations $\rho_1, \ldots, \rho_4$ under the action of $\PMCG(0,5)$ are as follows:\\

[$\rho_1$] \begin{enumerate}
\item $\left(\dfrac{1+v^2}{v}, \dfrac{1+u^2}{u}, 0, 0, -\dfrac{u^2 +v^2}{uv} \, \right| \left. \, \dfrac{1+u^2 v^2}{uv}, 0, 0, 0, 0, -\dfrac{1+u^4}{u^2}, 0, 0, -\dfrac{u^4+v^2}{u^2 v}, -\dfrac{1+u^2}{u}\right)$
\item $\left(\dfrac{1+v^2}{v}, \dfrac{1+u^2}{u}, 0, 0, -\dfrac{u ^2 +v^2}{uv}\, \right| \left. \ \dfrac{u^2 +v^2}{uv}, 0, 0, 0, 0, -\dfrac{u^4+v^4}{u^2v^2}, 0, 0, -\dfrac{u^4+v^2}{u^2 v}, -\dfrac{u^2+v^4}{uv^2}\right) $
\item $\left(\dfrac{1+v^2}{v}, \dfrac{1+u^2}{u}, 0, 0, -\dfrac{u^2 +v^2}{uv}\, \right| \left. \ \dfrac{u ^2 +v^2}{uv}, 0, 0, 0, 0, -2, 0, 0, -\dfrac{1+v^2}{v}, -\dfrac{1+u^2}{u}\right) $
\item $\left(\dfrac{1+v^2}{v}, \dfrac{1+u^2}{u}, 0, 0, \dfrac{u^2 +v^2}{uv}\, \right| \left. \ \dfrac{1+u^2 v^2}{uv}, 0, 0, 0, 0, -\dfrac{1+v^4}{v^2}, 0, 0, -\dfrac{1+v^2}{v}, -\dfrac{u^2+v^4}{uv^2}\right) $
\end{enumerate}

[$\rho_2$] \begin{enumerate}
\item $\left(0, \dfrac{1+v^2}{v}, \dfrac{1+u^2}{u}, \dfrac{1+v^2}{v}, 0\, \right| \left. \ 0, 0, 0, \dfrac{1+u ^2 v^2}{uv}, \dfrac{1+v^4}{ v^2}, \dfrac{1+u ^2 v^2}{uv}, 0, 0, 0, \dfrac{1+u^2v^4}{u v^2} \right)$
\item $\left(0, \dfrac{1+v^2}{v}, \dfrac{1+u^2}{u}, \dfrac{1+v^2}{v}, 0\, \right| \left. \ 0, 0, 0, \dfrac{1+u ^2 v^2}{uv}, 2, \dfrac{u^2+v^2}{u v}, 0, 0, 0, \dfrac{1+u^2}{u}\right)$
\item $\left(0, \dfrac{1+v^2}{v}, \dfrac{1+u^2}{u}, \dfrac{1+v^2}{v}, 0\, \right| \left. \ 0, 0, 0, \dfrac{u^2+v^2}{u v}, \dfrac{1+v^4}{ v^2}, \dfrac{u^2+v^2}{u v}, 0, 0, 0, \dfrac{u^2+v^4}{u v^2}\right)$
\item $\left(0, \dfrac{1+v^2}{v}, \dfrac{1+u^2}{u}, \dfrac{1+v^2}{v}, 0\, \right| \left. \ 0, 0, 0, \dfrac{u^2+v^2}{u v}, 2, \dfrac{1+u ^2 v^2}{uv}, 0, 0, 0, \dfrac{1+u^2}{u}\right)$
\end{enumerate}

[$\rho_3$] \begin{enumerate}
\item $\left( 0, 0, \dfrac{1+s^2}{s }, \dfrac{1+s^2}{s }, -\dfrac{1+s^2}{s }\, \right| \left. \ -\dfrac{1+s^2}{s }, 0, 0, 0, 0, 2, -\dfrac{1+s^4}{s^2 }, -2, 0, 0\right)$
\item $\left(0, 0, \dfrac{1+s^2}{s }, \dfrac{1+s^2}{s }, -\dfrac{1+s^2}{s }\, \right| \left. \ -\dfrac{1+s^2}{s }, 0, 0, 0, 0, \dfrac{1+s^4}{s^2 }, -2, -2, 0, 0\right)$
\item $\left(0, 0, \frac{1+s^2}{s }, \frac{1+s^2}{s }, -\frac{1+s^2}{s }\, \right| \left. \ -\frac{(1+s^2)(1-s^2+s^4)}{s^3 }, 0, 0, 0, 0, \frac{1+s^4}{s^2 }, -\frac{1+s^4}{s^2 }, -\frac{1+s^4}{s^2 }, 0, 0\right)$
\item $\left(0, 0, \dfrac{1+s^2}{s }, \dfrac{1+s^2}{s }, -\dfrac{1+s^2}{s }\, \right| \left. \ -\dfrac{1+s^2}{s }, 0, 0, 0, 0, 2, -2, -\dfrac{1+s^4}{s^2 }, 0, 0\right)$
\end{enumerate}

[$\rho_4$] \begin{enumerate}
\item $\left( 0, \dfrac{1+s^2}{s }, \dfrac{1+s^2}{s }, \dfrac{1+s^2}{s }, 0\, \right| \left. \ 0, 0, 0, \dfrac{1+s^4}{s^2 }, 2, 2, 0, 0, 0, \dfrac{1+s^2}{s }\right)$
\item $\left(0, \frac{1+s^2}{s }, \frac{1+s^2}{s }, \frac{1+s^2}{s }, 0\, \right| \left. \ 0, 0, 0, \frac{1+s^4}{s^2 }, \frac{1+s^4}{s^2 }, \frac{1+s^4}{s^2 }, 0, 0, 0, \frac{(1+s^2)(1-s^2+s^4)}{s^3 }\right)$
\item $\left(0, \dfrac{1+s^2}{s }, \dfrac{1+s^2}{s }, \dfrac{1+s^2}{s }, 0\, \right| \left. \ 0, 0, 0, 2, 2, \dfrac{1+s^4}{s^2 }, 0, 0, 0, \dfrac{1+s^2}{s }\right)$
\item $\left(0, \dfrac{1+s^2}{s }, \dfrac{1+s^2}{s }, \dfrac{1+s^2}{s }, 0\, \right| \left. \ 0, 0, 0, 2, \dfrac{1+s^4}{s^2 }, 2, 0, 0, 0, \dfrac{1+s^2}{s }\right)$.
\end{enumerate}

One can then see just by checking the first four coordinates $t_1, \ldots, t_4$ (corresponding to the traces of the matrices $\rho(d_1), \ldots, \rho(d_4)$) that these constitute four distinct parametrised families of finite orbits, thus proving the first point of Theorem~\ref{thA:MCGOrbits}.

\subsection{Extended orbits}

In order to show that these orbits are "truly different", we wish to know whether or not any two of them are contained in some orbit under the mapping class group action of $\MCG(0,5)$ over $\CV(0,5)$. First, we use the extended orbit computation algorithm showcased in Appendix~A of the author's PhD thesis \cite{AThese} to actually compute the orbits of the classes of $\rho_1, \ldots, \rho_4$ under the aforementioned action. Since these range in size from 40 to 240 elements we shall refer the reader to that particular appendix for the complete list, and we will restrict ourselves to giving the highlights.

The first thing to remark is that the orbit of $\rho_1$ (resp. $\rho_2, \rho_3, \rho_4$) is made up of $240$ (resp. $120,120,40$) elements. This means that we have at least two distinct mapping class group orbits here : that of $\rho_1$, that of $\rho_2$ since they both have different cardinalities and the same number of free parameters. It now remains to see whether or not the family of orbits given by $\rho_3$ and $\rho_4$ are distinct from these two.

Looking at the list, one remarks that the fiftieth element in the orbit of $\rho_2$ is equal to 
\begin{displaymath}
\left( 0, 0, \dfrac{1+v^2}{v}, \dfrac{1+v^2}{v}, \dfrac{1+u^2}{u}, 0, 0, 0, 0, 2, \dfrac{1+u^2v^2}{uv}, \dfrac{u^2+v^2}{uv}, 0, 0, \dfrac{1+u^2}{u} \right) \;.
\end{displaymath}
A quick computation shows that the above becomes, after the change of parameters $u \mapsto -s, \, v\mapsto s$
\begin{displaymath}
\left( 0, 0, \dfrac{1+s^2}{s }, \dfrac{1+s^2}{s }, -\dfrac{1+s^2}{s }, 0, 0, 0, 0, 2, -\dfrac{1+s^4}{s^2 }, -2, 0, 0, -\dfrac{1+s^2}{s }\right)
\end{displaymath}
which is actually the first point in the extended orbit of $\rho_3$. Therefore, these two orbits must be equal. Moreover, the one--hundred and fourth element in the orbit of $\rho_2$ is equal to 
\begin{displaymath}
\left( 0,\dfrac{1+u^2}{u},\dfrac{1+v^2}{v},\dfrac{1+v^2}{v},0,0,0,\dfrac{1+u^2v^2}{uv},\dfrac{u^2+v^2}{uv},2,0,0,0,\dfrac{1+u^2}{u},0 \right) \;.
\end{displaymath}
Here again, an adequate change of parameters (namely $u,v \mapsto s$ turns it into the first element in the orbit of $\rho_4$
$$\left( 0, \dfrac{1+u^2}{u }, \dfrac{1+u^2}{u }, \dfrac{1+u^2}{u }, 0, 0, 0, \dfrac{1+u^4}{u^2 }, 2, 2, 0, 0, 0, \dfrac{1+u^2}{u }, 0\right)$$
meaning that this one is just a special case of that of $\rho_2$.

\section{Explicit construction of the remaining Garnier solution}\label{sec:explicitConstruction}
The object of this paragraph is to give an explicit construction of a family of logarithmic flat connections on $\PP^2_\CC$ whose monodromy realises the second representation in Theorem~\ref{thA:quintics} and to describe the associated isomonodromic deformation of the five punctured sphere. In order to do so, we will follow broadly the method outlined in \cite{A2}; without going into as much detail since both constructions are quite similar. 
\subsection{Set--up}

Consider the projective plane quintic curve $\Qr^\prime \subset \PP^2_\CC$ defined (in homogeneous coordinates $[x:y:z]$) by the equation: 
\begin{displaymath}
y(y-z)z(x^2-yz) = 0 \; .
\end{displaymath}

\begin{figure}[!h]
\begin{center}
\begingroup%
  \makeatletter%
  \providecommand\color[2][]{%
    \errmessage{(Inkscape) Color is used for the text in Inkscape, but the package 'color.sty' is not loaded}%
    \renewcommand\color[2][]{}%
  }%
  \providecommand\transparent[1]{%
    \errmessage{(Inkscape) Transparency is used (non-zero) for the text in Inkscape, but the package 'transparent.sty' is not loaded}%
    \renewcommand\transparent[1]{}%
  }%
  \providecommand\rotatebox[2]{#2}%
  \ifx\svgwidth\undefined%
    \setlength{\unitlength}{317.86725407bp}%
    \ifx\svgscale\undefined%
      \relax%
    \else%
      \setlength{\unitlength}{\unitlength * \real{\svgscale}}%
    \fi%
  \else%
    \setlength{\unitlength}{\svgwidth}%
  \fi%
  \global\let\svgwidth\undefined%
  \global\let\svgscale\undefined%
  \makeatother%
  \begin{picture}(1,0.5192419)%
    \put(0,0){\includegraphics[width=\unitlength]{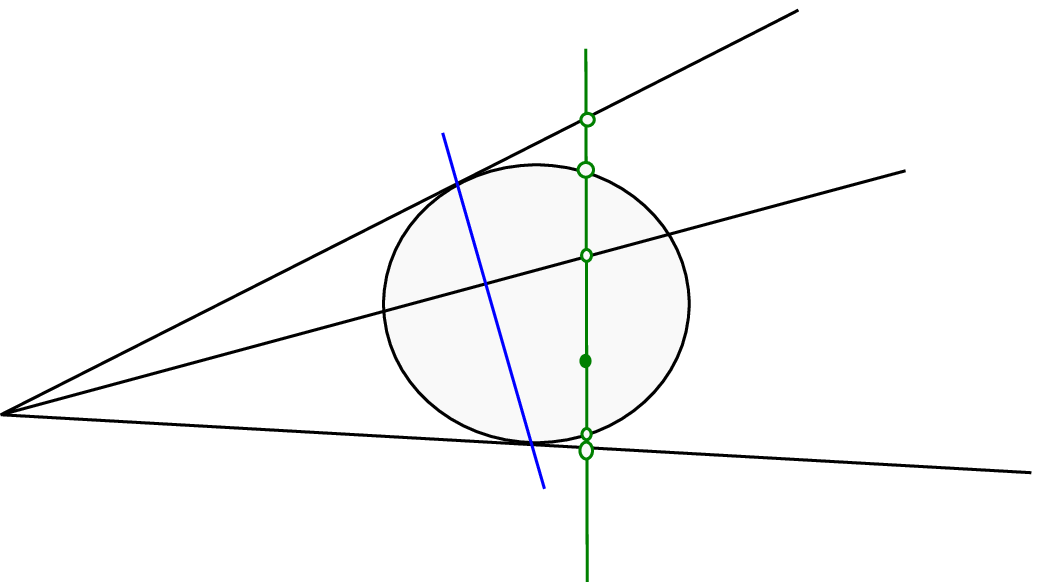}}%
    \put(0.525,0.20772693){\color[rgb]{0,0,0}\makebox(0,0)[lb]{\smash{$x_0$}}}%
      \put(0.57343331,0.10793112){\color[rgb]{0,0,0}\makebox(0,0)[lb]{\smash{$g_1$}}}%
    \put(0.56782522,0.16303938){\color[rgb]{0,0,0}\makebox(0,0)[lb]{\smash{$g_2$}}}%
    \put(0.567,0.29){\color[rgb]{0,0,0}\makebox(0,0)[lb]{\smash{$g_3$}}}%
    \put(0.57406327,0.36){\color[rgb]{0,0,0}\makebox(0,0)[lb]{\smash{$g_4$}}}%
    \put(0.53039702,0.455){\color[rgb]{0,0,0}\makebox(0,0)[lb]{\smash{$g_5$}}}%
    \put(0.32,0.41632942){\color[rgb]{0,0,0}\makebox(0,0)[lb]{\smash{$(x=0)$}}}%
    \put(0.73544196,0.33497499){\color[rgb]{0,0,0}\makebox(0,0)[lb]{\smash{$(y=z)$}}}%
        \put(0.73544196,0.08){\color[rgb]{0,0,0}\makebox(0,0)[lb]{\smash{$(y=0)$}}}%
    \put(0.69794266,0.49387034){\color[rgb]{0,0,0}\makebox(0,0)[lb]{\smash{$(z=0)$}}}%
  \end{picture}%
\endgroup
\end{center}
\caption{Fundamental group of $\PP^2 - \Qr^\prime$ and restriction to a generic line.} \label{fig:Pi1_Qp}
\end{figure}

We know from Degtyarev's list that the fondamental group of the complement of the quintic $\Qr^\prime$ is isomorphic to:
\begin{displaymath}
\Gamma_2^\prime := \langle a,b,c \, | \, [a,b] = [a,c^{-1}bc] = 1, \, (bc)^2 = (cb)^2 \rangle \; .
\end{displaymath}
More precisely, we get from the Zariski--Van Kampen method that we can take $c$ (resp. $a,b$) to be a loop realising the local monodromy around the conic $\Cr^\prime$ defined by the equation $x^2-yz=0$  (resp. the lines $(y=z)$, $(y=0)$). The Lefschetz hyperplane theorem (see~\cite{Milnor}, Theorem 7.4) tells us that the natural morphism $\tau : \Fb_4 \rightarrow \Gamma_2^\prime$ stemming from restriction to a generic line is onto and if we chose a line passing through the base point used to define $a,b$ and $c$ then $\tau$ is given (up to changing the generators $g_i$) by (see Figure~\ref{fig:Pi1_Qp}):
\begin{align*}
g_1 & \mapsto  c \\
g_2 & \mapsto b\\
g_3 & \mapsto a\\
g_4 & \mapsto b\\
g_5 & \mapsto (cbab)^{-1} = (cb^2 a)^{-1} \; .
\end{align*}

As per Theorem~\ref{thA:quintics}, we wish to construct a family of logarithmic flat connections over $\PP^2$ with polar locus equal to $\Qr^\prime$ and monodromy of the form:
\begin{displaymath}
\rho_{u,v} : a  \mapsto \begin{pmatrix}
u & 0 \\ 
0 & u^{-1}
\end{pmatrix}  , \quad 
b  \mapsto \begin{pmatrix}
v & 0 \\ 
0 & v^{-1}
\end{pmatrix}, \quad
c  \mapsto \begin{pmatrix}
0 & 1 \\ 
-1 & 0
\end{pmatrix} \; , \text{ for }  u,v \in \CC^* \; .
\end{displaymath}

\subsection{A suitable double cover}

As in our earlier work \cite{A2}, the key point of our construction will be to find a properly ramified double cover so that we are able to obtain a connection with dihedral monodromy. Here, since the local monodromy with projective order two arises along the line $\lbrace y=0 \rbrace$, we will need to have the aforementioned cover ramify there. Consider the following $2:1$ birational map:
\begin{align*}
\pi : \PP^1 \times \PP^1 & \rightarrow \PP^2\\
([u_0:u_1],[v_0,v_1]) & \mapsto [u_0 v_1^2  : u_1v_0^2  : u_1 v_1^2] \; .
\end{align*}
The map $\pi$ has indeterminacy locus equal to the point $\lbrace ([1:0],[1:0])\rbrace$ and ramifies over $\lbrace y=0 \rbrace \subset \PP^2_\CC$. Moreover, one also has (see Fig.~\ref{fig:ramif}): 
\begin{align*}
\pi^* \Cr^\prime &= \lbrace ([u_0:u_1],[v_0:v_1]) \in  \PP^1 \times \PP^1 \, | \, (u_0v_1 - u_1v_0)(u_0v_1 +u_1v_0)=0\rbrace \\
& =\lbrace u=v \rbrace \cup \lbrace u=-v \rbrace \subset \PP^1_u \times \PP^1_v \; ,
\end{align*}
\begin{align*}
\pi^*( \lbrace y=z \rbrace \cap \lbrace z \neq 0 \rbrace) &= \lbrace ([u_0:u_1],[v_0:v_1]) \in  \PP^1 \times \PP^1 \, | \, v_0^2 = v_1^2 \rbrace \\
& = \lbrace v=1 \rbrace \cup \lbrace v=-1 \rbrace \subset \PP^1_u \times \PP^1_v \; ,
\end{align*}
\begin{align*}
\pi^* (\lbrace y=0 \rbrace\cap \lbrace z \neq 0 \rbrace) &= \lbrace ([u_0:u_1],[v_0:v_1]) \in  \PP^1 \times \PP^1 \, | \, v_0 = 0 \rbrace \\
& = \lbrace v=0 \rbrace \subset \PP^1_u \times \PP^1_v \; 
\end{align*}
and
\begin{align*}
\pi^* \lbrace z=0 \rbrace &= \lbrace ([u_0:u_1],[v_0:v_1]) \in  \PP^1 \times \PP^1 \, | \, u_1v_1^2 = 0 \rbrace \\
& =\lbrace u=\infty \rbrace \cup \lbrace v=\infty \rbrace \subset \PP^1_u \times \PP^1_v \; .
\end{align*}

\begin{figure}[!h]
\begin{center}
\scalebox{0.7}{
\begingroup%
  \makeatletter%
  \providecommand\color[2][]{%
    \errmessage{(Inkscape) Color is used for the text in Inkscape, but the package 'color.sty' is not loaded}%
    \renewcommand\color[2][]{}%
  }%
  \providecommand\transparent[1]{%
    \errmessage{(Inkscape) Transparency is used (non-zero) for the text in Inkscape, but the package 'transparent.sty' is not loaded}%
    \renewcommand\transparent[1]{}%
  }%
  \providecommand\rotatebox[2]{#2}%
  \ifx\svgwidth\undefined%
    \setlength{\unitlength}{549.725bp}%
    \ifx\svgscale\undefined%
      \relax%
    \else%
      \setlength{\unitlength}{\unitlength * \real{\svgscale}}%
    \fi%
  \else%
    \setlength{\unitlength}{\svgwidth}%
  \fi%
  \global\let\svgwidth\undefined%
  \global\let\svgscale\undefined%
  \makeatother%
  \begin{picture}(1,0.27441641)%
    \put(0,0){\includegraphics[width=\unitlength]{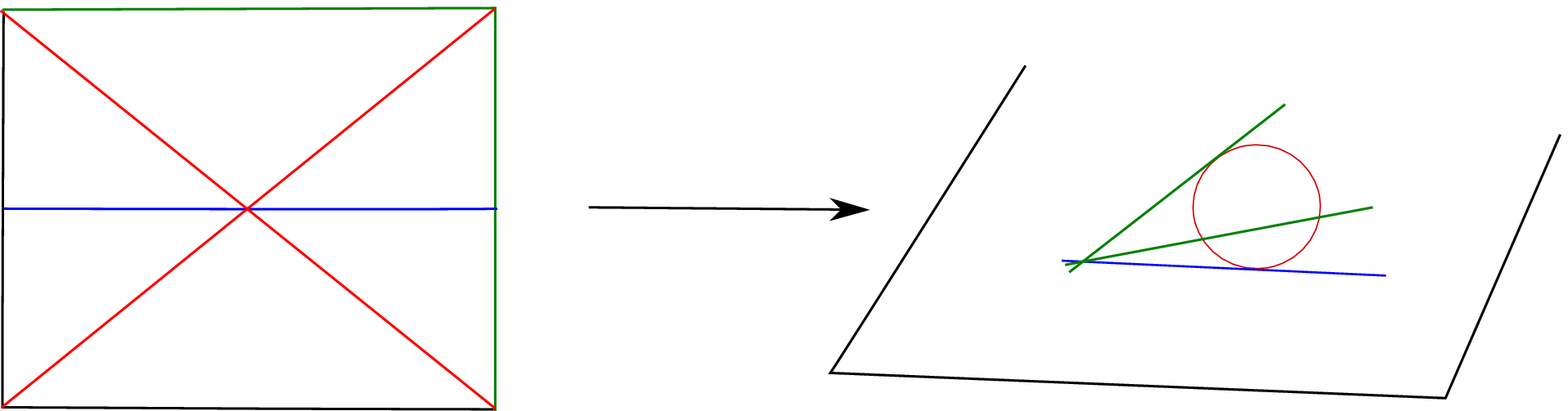}}%
    \put(0.33038598,0.0401361){\color[rgb]{0,0,0}\makebox(0,0)[lb]{\smash{$u=\infty$}}}%
    \put(0.01958122,0.26778241){\color[rgb]{0,0,0}\makebox(0,0)[lb]{\smash{$v=\infty$
}}}%
    \put(0.1017002,0.18878187){\color[rgb]{0,0,0}\makebox(0,0)[lb]{\smash{$u=-v$}}}%
    \put(0.23267478,0.17215017){\color[rgb]{0,0,0}\makebox(0,0)[lb]{\smash{$u=v$}}}%
    \put(0.25,0.108){\color[rgb]{0,0,0}\makebox(0,0)[lb]{\smash{$v=0$}}}%
    \put(0.45616314,0.11082081){\color[rgb]{0,0,0}\makebox(0,0)[lb]{\smash{$\pi$}}}%
    \put(0.84388945,0.154479){\color[rgb]{0,0,0}\makebox(0,0)[lb]{\smash{$\Cr^\prime$}}}%
    \put(0.82,0.18254499){\color[rgb]{0,0,0}\makebox(0,0)[lb]{\smash{$z=0$}}}%
    \put(0.8958635,0.12953145){\color[rgb]{0,0,0}\makebox(0,0)[lb]{\smash{$y=z$}}}%
    \put(0.8958635,0.08275482){\color[rgb]{0,0,0}\makebox(0,0)[lb]{\smash{$y=0$}}}%
  \end{picture}%
\endgroup}
\end{center}
\caption{Ramification locus of $\pi$.} \label{fig:ramif}
\end{figure}

This means that the quintic $\Qr^\prime$ is pulled back by $\pi$ onto seven lines in $\PP^1 \times \PP^1$. The cover $\pi$ corresponds to the quotient of $\PP^1 \times \PP^1$ under the involution $(u,v) \mapsto (u,-v)$. Now let us consider the elementary transformation of $\PP^1 \times \PP^1$ defined by:
\begin{align*}
\mathfrak{b} : \PP^1 \times \PP^1 & \rightarrow \PP^1 \times \PP^1\\
([u_0:u_1],[v_0:v_1]) & \mapsto ([u_0 v_0 :u_1v_1],[v_0:v_1]) \; ;
\end{align*}
then one can easily check that: 
\begin{align*}
\mathfrak{b}^* (\lbrace u = \pm v \rbrace\cap \lbrace v \neq 0,\infty \rbrace) &= \lbrace ([u_0:u_1],[v_0:v_1]) \in  \PP^1 \times \PP^1 \, | \, (u_0v_0 - \pm v_0)(u_1v_1 - \pm v1)=0\rbrace \\
& =\lbrace u= \pm 1 \rbrace \subset \PP^1_u \times \PP^1_v \; 
\end{align*}
and 
\begin{align*}
\mathfrak{b}^* \lbrace u = \infty \rbrace &= \lbrace ([u_0:u_1],[v_0,v_1]) \in  \PP^1 \times \PP^1 \, | \, u_1 v_1=0\rbrace \\
& =\lbrace u= \infty \rbrace \cup \lbrace v=\infty \rbrace \cup \lbrace v=\infty \rbrace\subset \PP^1_u \times \PP^1_v \; .
\end{align*}

The heuristic behind this procedure can be summed up as follows: imagine that we have a logarithmic flat connection $\nabla$ over $\PP^2$ satisfying the hypotheses of Theorem~\ref{thA:GarnierSol2}; then $\pi^* \nabla$ will be a logarithmic flat connection over $\PP^1 \times \PP^1$ whose monodromy around $\lbrace v = 0 \rbrace$ must be (projectively) trivial since $\pi$ ramifies over $\lbrace y=0 \rbrace$. As such, its monodromy group must be abelian and the pullback connection $(\pi \circ \mathfrak{b})^*\nabla$ also has abelian monodromy and factors through the fundamental group of the complement of $$\lbrace u=\infty \rbrace \cup \lbrace v=\infty \rbrace \cup \lbrace u=1 \rbrace \cup \lbrace u=-1 \rbrace \cup \lbrace v=1 \rbrace \cup \lbrace v=-1 \rbrace \text{ in } \PP^1_u \times \PP^1_v \; $$which is a product of free groups. This will allow us to set up a construction quite similar to the one undertaken in \cite{A2}.

\subsection{Constructing the connection}

Drawing inspiration from the above heuristic and our previous paper on this topic \cite{A2}, we consider the trivial rank two vector bundle $E_0$ on $X:= \PP^1 \times \PP^1$ endowed with the logarithmic flat connection:
\begin{displaymath}
\nabla_0 := \dd  + \dfrac{1}{2}\begin{pmatrix}
\omega_0 & 0 \\ 
0 & -\omega_0
\end{pmatrix} \; ,
\end{displaymath}
where  $u,v$ are projective coordinates on the base $X$ and
\begin{displaymath}
\omega_0 :=\lambda_0 \left( \dfrac{\dd u}{u-1} - \dfrac{\dd u}{u+1}  \right) + \lambda_1   \left( \dfrac{\dd v}{v-1} - \dfrac{\dd v}{v+1} \right)  \, ,
\end{displaymath}
with $(\lambda_0, \lambda_1) \in \CC^2\setminus \lbrace (0,0) \rbrace$. This connection generically has singular locus equal to four lines in $X$ and its local monodromy is given by the following matrices:
\begin{itemize}
\item around $\lbrace u = \pm 1\rbrace$:
\begin{displaymath}
\begin{pmatrix}
e^{- i \pi \lambda_0} & 0 \\ 
0 & e^{ i \pi \lambda_0}
\end{pmatrix}^{\pm 1} \; ; 
\end{displaymath}
\item around $\lbrace v =\pm 1\rbrace$, $j=0,1$:
\begin{displaymath}
\begin{pmatrix}
e^{ i \pi \lambda_j} & 0 \\ 
0 & e^{ i \pi \lambda_j}
\end{pmatrix}^{\pm 1} \; .
\end{displaymath}
\end{itemize}

One easily checks that the one--form $\omega_0$ is the pullback under the elementary transform $\mathfrak{b}$ of:
\begin{displaymath}
\omega_1 :=\lambda_0 \left( \dfrac{\dd u}{u-v} - \dfrac{\dd u}{u+v}    +\dfrac{u}{v}\left(\dfrac{\dd v}{u+v}- \dfrac{\dd v}{u-v}\right)\right) + \lambda_1   \left( \dfrac{\dd v}{v-1} - \dfrac{\dd v}{v+1}  \right)  \, .
\end{displaymath}
Note that this one--form gets non--trivial local monodromy around the lines $\lbrace v = 0 \rbrace$ and $\lbrace u = \infty \rbrace$; it naturally gives rise to a Riccati foliation, defined by the following one--form over $\PP(E_0) = X \times \PP^1$:
\begin{displaymath}
\Ric_1 := \dd w - \omega_1 w 
\end{displaymath}
where $w$ is a projective coordinate on the fibres.

Now define the following involution on the projective bundle $\PP(E_0)$:
 \begin{align*}
 \eta : X \times \PP^1 & \rightarrow X \times \PP^1\\
(u,v,w) & \mapsto (-u,-v,-w) \;
 \end{align*}
that leaves invariant the one--form $\Ric_1$. This means that if we extend $\pi$ into the map
\begin{align*}
\bar{\pi} : X \times \PP^1 & \rightarrow \PP^2 \times \PP^1 \\
(u,v,[w_0:w_1]) & \mapsto (\pi(u,v), [w_0+w_1:w_0-w_1])
\end{align*}
then $\Ric_1$ is the pullback under $\bar{\pi}$ of the Riccati one--form defined in some affine chart by:
\begin{align*}
\Ric := \dd w + & \left( \dfrac{\lambda_0}{x^2-y}\dd x + \left( \dfrac{\lambda_0 x}{x^2-y} - \dfrac{\lambda_1}{2(y-1)} \right) \dfrac{\dd y}{y} \right) w^2\\
-& \dfrac{\dd y }{2y} w 
-  \dfrac{\lambda_0 y}{x^2-y}\dd x + \dfrac{1}{2}\left( \dfrac{\lambda_0 x}{y(x^2-y)} - \dfrac{\lambda_1}{y-1}\right) \dd y \;.
\end{align*}
 
The above is a logarithmic one--form over  $\PP^2 \times \PP^1$ whose singular locus is exactly the quintic $\Qr^\prime$. Moreover, this lifts (as explained in details in \cite{A2}) to a logarithmic flat connection $\nabla = \nabla_{\lambda_0, \lambda_1}$ over the trivial bundle $\CC^2 \times \PP^2$ over the projective plane $\PP^2$, namely
\begin{displaymath}
\nabla = \dd + \begin{pmatrix}
 -\dfrac{\dd y}{4y} & -  \dfrac{\lambda_0 y}{x^2-y}\dd x + \dfrac{1}{2}\left( \dfrac{\lambda_0 x}{y(x^2-y)} - \dfrac{\lambda_1}{y-1}\right) \dd y\\ 
\dfrac{\lambda_0}{x^2-y}\dd x + \left( \dfrac{\lambda_0 x}{x^2-y} - \dfrac{\lambda_1}{2(y-1)} \right) \dfrac{\dd y}{y} &\dfrac{\dd y}{4y}
\end{pmatrix} \; ;
\end{displaymath}
see table~\ref{tab:resRic} for the exact residues.

\begin{table}
\begin{center}
\begin{tabular}{c|c|c}
Divisor & Residue & Eigenvalues \\ 
\hline 
& & \\
$y=0$ & $\begin{pmatrix}
- \dfrac{1}{4} & 0 \\ 
\dfrac{\lambda_0+ \lambda_1 x}{2 x} & \dfrac{1}{4}
\end{pmatrix} $ &$\pm \dfrac{1}{4}$ \\ 
& & \\
\hline 
& &\\
$y=1$ & $\begin{pmatrix}
0 &   - \dfrac{\lambda_1}{2}\\\ 
- \dfrac{\lambda_1}{2} &0
\end{pmatrix} $&$\pm\dfrac{\lambda_1}{2}$\\ 
& &\\
\hline 
& &\\
$\Cr$ & $\begin{pmatrix}
0 &    \dfrac{\lambda_0 x}{2}\\\ 
 \dfrac{\lambda_0 }{2x} &0
\end{pmatrix} $ &$\pm \dfrac{\lambda_0}{2}$ \\ 
& &\\
\hline 
& &\\
$L_\infty$  &  $\begin{pmatrix}
-\dfrac{1}{4} &   0\\\ 
 \dfrac{\lambda_1}{2} & \dfrac{1}{4}
\end{pmatrix} $ &$\pm \dfrac{1}{4}$\\ 
 & &
\end{tabular} 
\end{center}
\caption{Residues for $\nabla$.}
\label{tab:resRic}
\end{table}

The fact that this does not generically factor through an orbicurve can then be proven using exactly the same argument used to prove Theorem C in our previous paper \cite{A2}. Indeed, there generically is no algebraic relations between the coefficients of the matrices in table \ref{tab:resRic}.

\subsection{Restriction to generic lines}

We now fix $\lambda_0, \lambda_1 \in \CC^*$ and consider the connection induced by $\nabla$ on generic lines in $\PP^2$, such a line being given in the affine chart $\lbrace z=0 \rbrace$ by an equation of the form $y=a x+b$. As in our previous paper \cite{A2}, we obtain an isomonodromic deformation $(\nabla_{a,b})_{a,b}$ over the five punctured sphere whose associated Riccati one--form is given by the following local formula (in the aforementioned affine chart):
\begin{align*}
\Ric(\nabla_{a,b}) = & \dfrac{2a^2 y^4 + (4ab-2-2a^2)y^3+(2-4ab+2b^2)y^2-2b^2y}{2y(y-1)(a^2y^2+(2ab-1)y+b^2)}\dd w \\
& + \dfrac{\alpha_2(y)w^2 + \alpha_1(y)w + \alpha_0}{2y(y-1)(a^2y^2+(2ab-1)y+b^2)}\dd y
\end{align*}
where
\begin{align*}
\alpha_2(y) &:= (\lambda_0a+\lambda_1a^2)y^2+((a-b)\lambda_0+(2ab-1)\lambda_1)y+\lambda_0b+\lambda_1b^2 \\
 \alpha_1(y) &:=-a^2y^3+(a^2-2ab+1)y^2+(2ab-b^2-1)y+b^2  \\
\alpha_0(y) &:= -(\lambda_0a+\lambda_1a^2)y^3+((a+b)\lambda_0+(1-2ab)\lambda_1)y^2-(\lambda_0b+\lambda_1b^2)y \;.
\end{align*}

As such, if one chooses a parameter $c$ such that $c^2 = 1-4ab$ then one gets a family of logarithmic flat connections $(\nabla_{a,c})_{a,c}$ over $\PP^1 \setminus \lbrace 0,1,t_1,t_2,\infty \rbrace$, where:
\begin{displaymath}
t_1 = \left(\dfrac{c-1}{2a}\right)^2\quad \text{ and } \quad t_2 = \left(\dfrac{c+1}{2a}\right)^2\; .
\end{displaymath}
An explicit local expression for this isomonodromic deformation can be obtained simply by replacing $b$ by $\dfrac{1-c^2}{4a}$ in the above formula for $\Ric(\nabla_{a,b})$; this allows us to explicitly compute the spectral data for $\nabla_{a,c}$, as detailed in \cite{AThese}.

\subsection{Associated Garnier solution}
Start by setting
\begin{displaymath}
\hat{H}:=\dfrac{M_0}{y} + \dfrac{M_1}{y-1} + \dfrac{M_{2}}{y-t_1}+ \dfrac{M_{3}}{y-t_2} \;;
\end{displaymath}
then since the lower left coefficient of the residue at infinity $M_\infty$ of $\nabla_{a,c}$ is zero, the numerator of $\hat{H}$ must be a degree two polynomial in $y$, say:
\begin{equation} \label{eq:coeffSol2}
\hat{H}_{2,1}=\dfrac{c(t_1,t_2)(y^2-S_q(t_1,t_2)y+P_q(t_1,t_2))}{y(y-1)(y-t_1)(y-t_2)} \, ,
\end{equation}
where $S_q := q_1+q_2$ and $P_q := q_1 q_2$, with $q_1,q_2$ some algebraic functions of $(t_1,t_2)$. In the same manner that we did in earlier work on the subject \cite{A2}, we obtain a rational parametrisation of $S_q,P_q, t_1, t_2$.

More precisely, the parameters $(a,c)$ give a rational mapping $(\PP^1)^2 \rightarrow (\PP^1)^4$ giving explicit expressions of $(t_1,t_2,S_q,P_q)$, namely: 
\begin{align*}
t_1 & = \dfrac{(c-1)^2}{4} \, \\
t_2 & =  \dfrac{(c+1)^2}{4}\, , \\
S_q  & = \dfrac{(1-c^2+4a^2)\lambda_0 + 2a(c^2 + 1)\lambda_1 }{4a^2(\lambda_0+a\lambda_1)} \, ,\\
P_q  & = -\dfrac{(c-1)(c+1)(4a\lambda_0+(1-c^2))}{16a^3(\lambda_0+a\lambda_1)} \; .
\end{align*}

We can now prove that we have indeed constructed a family of algebraic solutions for a Garnier system. More precisely, it is a two--variables Hamiltonian system:
\begin{equation} \label{Garnier2}
\left\lbrace \begin{array}{ccc}
\dr_{t_k} \mathbf{p}_i & = - \dr_{\mathbf{q}_i} H_k  & i,k = 1,2\\ 
\dr_{t_k} \mathbf{q}_i &  = \dr_{\mathbf{p}_i} H_k & i,k = 1,2
\end{array} \right. \, ,
\end{equation} 
equivalent to the isomonodromy equation for five--punctured spheres, where the pair of Hamiltonians $(H_1,H_2)$ is explicitly known (the exact formulas however are a bit cumbersome; thus we refer the reader to  Appendix~B in \cite{AThese}; see also \cite{KimOka, Mazz}). Note that it differs from the one presented in \cite{A2}, as the local eigenvalues are not the same.

\begin{prop}\label{prop:Garnier2}
Let $q_1, q_2$ be the algebraic functions defined above; then there exist two algebraic functions $p_1(t_1,t_2)$ and $p_2(t_1,t_2)$ such that $(q_1,q_2,p_1,p_2)$ is a solution of (\ref{Garnier2}).
\end{prop}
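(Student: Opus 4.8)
The plan is to deduce the statement from the classical equivalence between isomonodromic deformations of Fuchsian $\mathfrak{sl}_2$--systems on the five--punctured sphere and solutions of the Garnier Hamiltonian system (\ref{Garnier2}), and then to observe that algebraicity is forced by the explicit rational parametrisation obtained above. Recall first that the connections $\nabla_{a,c}$ are, by construction, the restrictions to generic lines of the single logarithmic flat connection $\nabla$ on $\PP^2$; since $\nabla$ is flat, the monodromy of $\nabla_{a,c}$ is independent (up to conjugacy) of the chosen line, so $(\nabla_{a,c})$ is a genuine isomonodromic deformation of $\PP^1_5$ as the parameters $(a,c)$ vary. By the stated equivalence (see \cite{KimOka, Mazz}), this deformation is encoded by a solution of (\ref{Garnier2}) once one fixes the appropriate Darboux coordinates.

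Next I would pin down those coordinates. In the normal form $\hat{H}=\sum_i M_i/(y-t_i)$, the hypothesis that the residue $M_\infty$ at infinity has vanishing lower--left entry makes the numerator of $\hat{H}_{2,1}$ a degree two polynomial in $y$, namely $y^2-S_q y+P_q$ as in (\ref{eq:coeffSol2}); its roots are the positions $q_1,q_2$ of the apparent singularities. Following the standard construction I would take as conjugate momenta
\begin{displaymath}
p_j := \hat{H}_{1,1}(q_j), \qquad j=1,2,
\end{displaymath}
so that in the pair $(q_1,q_2,p_1,p_2)$ the isomonodromy equations for $(\nabla_{a,c})$ become exactly (\ref{Garnier2}). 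With this identification there is no differential equation left to integrate: $(q_1,q_2,p_1,p_2)$ is automatically a solution, the entire content of this step being the matching of coordinates.

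Finally I would verify algebraicity, which for a Garnier solution amounts to exhibiting a rational parametrisation of the full tuple $(t_1,t_2,q_1,q_2,p_1,p_2)$ by auxiliary parameters. From the displayed rational expressions for $(t_1,t_2,S_q,P_q)$ in terms of $(a,c)$, both $(t_1,t_2)$ and $S_q=q_1+q_2$, $P_q=q_1q_2$ are rational in $(a,c)$, so $q_1,q_2$ are algebraic in $(a,c)$. The momenta $p_j=\hat{H}_{1,1}(q_j)$ are values of a rational function of $y$ whose coefficients are rational in $(a,c)$ (read off from Table~\ref{tab:resRic} and the Riccati form of $\Ric(\nabla_{a,b})$) evaluated at the algebraic points $q_j$, hence are themselves algebraic in $(a,c)$. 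Eliminating $(a,c)$ then exhibits the graph $\lbrace (t_1,t_2,q_1,q_2,p_1,p_2)\rbrace$ as an algebraic variety, realising $q_i,p_i$ as algebraic functions of $(t_1,t_2)$, as required.

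The main obstacle I anticipate is one of bookkeeping rather than of substance: one must check that the pair $(q_j,p_j)$ built from $\hat{H}$ coincides, including all normalisations and signs, with the canonical Darboux pair for which the reference Hamiltonians $(H_1,H_2)$ generate (\ref{Garnier2}). As noted, the local eigenvalue data here differ from those in \cite{A2}, so the relevant Hamiltonians are those of Appendix~B in \cite{AThese}; carefully reconciling these conventions is precisely what turns the otherwise soft isomonodromy argument into a rigorous proof.
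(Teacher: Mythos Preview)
Your argument is correct in outline but follows a genuinely different route from the paper. You invoke the general isomonodromy--Garnier correspondence: since $(\nabla_{a,c})$ is isomonodromic by construction, the standard Darboux pair $(q_j,p_j)$ with $p_j=\hat H_{1,1}(q_j)$ automatically solves (\ref{Garnier2}), and algebraicity then follows from the rational parametrisation in $(a,c)$. This is the conceptual, ``soft'' proof, and the only real work left---as you rightly flag---is reconciling the normalisations with the specific Hamiltonians of Appendix~B in \cite{AThese}.

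The paper instead proceeds by direct computation, and in a way designed to avoid ever extracting $q_1,q_2$ (and hence $p_1,p_2$) individually. It passes to the symmetrised variables $S_{\mathbf q},P_{\mathbf q},S_{\mathbf p}:=\mathbf p_1+\mathbf p_2,\gamma:=(\mathbf p_1-\mathbf p_2)/(\mathbf q_1-\mathbf q_2)$, rewrites (\ref{Garnier2}) in those variables, \emph{solves} two of the equations (those for $\partial_{t_1}S_{\mathbf q}$ and $\partial_{t_1}P_{\mathbf q}$) for $S_{\mathbf p}$ and $\gamma$ as explicit rational functions of $(a,c)$, and then checks symbolically that the full symmetrised system is satisfied. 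This buys two things your approach does not: it produces closed rational formulas for $S_{\mathbf p}$ and $\gamma$ in $(a,c)$ (so the parametrisation stays rational rather than merely algebraic, since one never solves the quadratic $y^2-S_q y+P_q=0$), and it makes the verification entirely self-contained, independent of any convention-matching with external Garnier references. Conversely, your approach is shorter, explains \emph{why} the result holds, and would transfer unchanged to other quintic families; but it leaves the actual verification to the cited literature.
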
 
 \begin{proof}
We proceed exactly as we did for the first family of solutions: we consider the "symmetrised" system:
\begin{displaymath}
\left\lbrace \begin{array}{clc} 
\dr_{t_k} S_\mathbf{q} & = (\dr_{\mathbf{p}_1} + \dr_{\mathbf{p}_2})H_k & k=1,2 \\ 
\dr_{t_k} P_\mathbf{q} &  =   (\mathbf{q}_2\dr_{\mathbf{p}_1} + \mathbf{q}_1\dr_{\mathbf{p}_2})H_k & k=1,2 \\
\dr_{t_k} S_\mathbf{p} &  = - (\dr_{\mathbf{q}_1} + \dr_{\mathbf{q}_2})H_k& k=1,2 \\
\dr_{t_k} \gamma &  = \dfrac{-1}{(\mathbf{q}_1-\mathbf{q}_2)^2} ( (\mathbf{q}_1-\mathbf{q}_2)(\dr_{\mathbf{q}_1} + \dr_{\mathbf{q}_2})+(\mathbf{p}_1-\mathbf{p}_2)(\dr_{\mathbf{p}_1} + \dr_{\mathbf{p}_2}))H_k& k=1,2 \\
\end{array} \right. \, ,
\end{displaymath}
where $S_\mathbf{p} := \mathbf{p}_1+\mathbf{p}_2$ and $\gamma = \dfrac{\mathbf{p}_1-\mathbf{p}_2}{\mathbf{q}_1-\mathbf{q}_2}$. To obtain this we first had to consider the variable $\delta := \mathbf{q}_1-\mathbf{q}_2$ and then eliminate it using the fact that all expressions obtained had even degree in $\delta$ and that $\delta^2 = S_\mathbf{q}^2-4P_\mathbf{q}$.

Assume that $(p_1,p_2)$ are two algebraic functions such that $(q_1,q_2,p_1,p_2)$ is a solution of (\ref{Garnier2}). Using the first two equations with $k=1$ one then gets $S_\mathbf{p}$ and $\gamma$ as functions of $\dr_{t_1} S_q$ and $\dr_{t_1} P_q$ which in turn are rational functions of $(a,c)$, namely:
\begin{align*}
\gamma=&\dfrac{8(1+a)(\lambda_0+a\lambda_1)a^3}{(c^2-1)(1+c+2a)(1-c+2a)}\, , \\
Sp  = & -\dfrac{2a(1+3a-c^2+4a^2-3ac^2+4a^3)\lambda_0+(2a+4a^2+2ac^2)\lambda_1}{(c^2-1)(1+c+2a)(1-c+2a)}\: .
\end{align*}
This completes the rational parametrisation of all relevant variables and allows us to check that $(S_q,P_q,S_p,\gamma)$ indeed satisfies the above system, in the exact same way we did in \cite{A2}.
\end{proof}

\bibliographystyle{smfalpha}
\bibliography{2016-garnierQuint}

\end{document}